\newcommand{\ninn}[0]{\noindent}
\newtheorem{theorem}{Theorem}
\newtheorem{proposition}[theorem]{Proposition}
\newtheorem{lemma}[theorem]{Lemma}
\newtheorem{conjecture}{Conjecture}
\newtheorem{problem}[conjecture]{Problem}
\theoremstyle{definition}
\crefname{lemma}{lemma}{lemmas}
\crefname{theorem}{theorem}{theorems}
\title{Leaf-to-leaf paths and cycles in degree-critical graphs}
\author{Francesco Di Braccio\thanks{Department of Mathematics, London School of Economics, UK. Email: {\tt f.di-braccio@lse.ac.uk}} 
\and Kyriakos Katsamaktsis\thanks{Department of Mathematics, University College London, UK. Research supported by the Engineering and Physical Sciences Research Council [grant number EP/W523835/1]. Email: {\tt kyriakos.katsamaktsis.21@ucl.ac.uk} } 
\and Jie Ma\thanks{School of Mathematical Sciences, University of Science and Technology of China, Hefei, Anhui 230026, China, and Yau Mathematical Sciences Center, Tsinghua University, Beijing 100084, China.
Research supported by National Key Research and Development Program of China 2023YFA1010201 and National Natural Science Foundation of China grant 12125106. Email: {\tt jiema@ustc.edu.cn}}
\and Alexandru Malekshahian\thanks{Mathematical Institute, University of Oxford, UK. Email: {\tt alex.malekshahian@maths.ox.ac.uk}. Research completed while the author was affiliated with the Department of Mathematics, King's College London, UK.}
\and Ziyuan Zhao\thanks{School of Mathematical Sciences, University of Science and Technology of China, Hefei, Anhui 230026, China. Research supported by Innovation Program for Quantum Science and Technology 2021ZD0302902. Email: {\tt zyzhao2024@mail.ustc.edu.cn}}
}
\date{}
\begin{document}
\maketitle

\begin{abstract}

An $n$-vertex graph is \emph{degree 3-critical} if it has $2n - 2$ edges and no proper induced subgraph with minimum degree at least 3. In 1988, Erd\H{o}s, Faudree, Gy\'arf\'as, and Schelp asked whether one can always find cycles of all short lengths in these graphs, which was disproven by Narins, Pokrovskiy, and Szab\'o through a construction based on leaf-to-leaf paths in trees whose vertices have degree either 1 or 3. They went on to suggest several weaker conjectures about cycle lengths in degree 3-critical graphs and leaf-to-leaf path lengths in these so-called 1-3 trees. We resolve three of their questions either fully or up to a constant factor. Our main results are the following:
 \begin{itemize}
    \item every $n$-vertex degree 3-critical graph has  $\Omega(\log n)$ distinct cycle lengths;
    \item every tree with maximum degree $\Delta \ge 3$ and $\ell$ leaves has at least $\log_{\Delta-1}\, ((\Delta-2)\ell)$ distinct leaf-to-leaf path lengths;
    \item for every integer $N\geq 1$, there exist arbitrarily large 1--3 trees which have $O(N^{0.91})$ distinct leaf-to-leaf path lengths smaller than $N$, and, conversely, every 1--3 tree on at least $2^N$ vertices has $\Omega(N^{2/3})$ distinct leaf-to-leaf path lengths smaller than $N$.
    \end{itemize}
Several of our proofs rely on purely combinatorial means, while others exploit a connection to an additive problem that might be of independent interest.
\end{abstract}

\section{Introduction}
There is a long line of research in combinatorics seeking to understand what conditions guarantee that a graph contains cycles of many different lengths. 
In 1973, Bondy~\cite{bondy1971pancyclic} made the famous meta-conjecture that any non-trivial condition that guarantees Hamiltonicity is enough to ensure that the $n$-vertex graph is \emph{pancyclic}, i.e. that it contains all cycle lengths in $\{3, \dots, n\}$. This led to a host of interesting results in the following fifty years bringing support to Bondy's conjecture in a variety of different settings~\cite{letzter2023pancyclicity,draganic2024pancyclicity,bondy1971pancyclic,bauer1990hamiltonian, Bucić_Gishboliner_Sudakov_2022}.
However,
most of the results in the area concern (somewhat) dense graphs, and for very sparse graphs our understanding of which graphs contain many cycle lengths is more  fragmentary.
Sudakov and Verstra\"ete~\cite{sudakov2008cycle} showed that graphs with average degree \(d\) and girth at least \(g\) contain $\Omega(d^{\lfloor (g-1)/2 \rfloor})$ distinct cycles lengths, thus proving a conjecture of Erd\H{o}s~\cite{erdos1993some}.
A related conjecture of Erd\H{o}s and Hajnal~\cite{erdos1993some} was resolved by Gy\'arf\'as, Koml\'os, and Szemer\'edi~\cite{gyarfas1984distribution}, who proved that in a graph with average degree $d$, the sum of the reciprocals of the distinct cycle lengths is $\Omega(\log d)$.

The starting point of the present work is a conjecture of Erd\H{o}s, Faudree, Gy\'arf\'as, and Schelp~\cite{erdos1988cycles}, who asked whether many cycle lengths can be found in a specific class of sparse graphs called \emph{degree 3-critical graphs}. These are defined to be graphs with \(n\) vertices, \(2n-2\) edges and no proper induced subgraph with minimum degree at least 3; it is not hard to see that these graphs necessarily have minimum degree 3. Degree 3-critical graphs satisfy several interesting properties; for example, they have no proper induced subgraph $H$ on $2|V(H)|-2$ edges, and hence, by a theorem of Nash-Williams \cite{nashwill}, they are the union of two edge-disjoint spanning trees. 

Erd\H{o}s, Faudree, Gy\'arf\'as, and Schelp~\cite{erdos1988cycles} proved that any $n$-vertex degree 3-critical graph contains a cycle of length 3, 4, and 5, as well as a cycle of length at least $\log n$.\footnote{Unless indicated otherwise, logarithms throughout this paper are base 2.}
This last bound was later improved by Bollob\'as and Brightwell \cite{bollobas1989long} to $4\log n+O(\log \log n)$, which is asymptotically best possible.
In an effort to reveal a rich structure of cycle lengths in such graphs, Erd\H{o}s et al. \cite{erdos1988cycles} (also see \cite{Erd91}) conjectured that it should be possible to find cycle lengths $3,4,5 \dots, N$ for some $N= N(n) \to \infty$ as $n \to \infty$.
Their conjecture, however, was disproven by Narins, Pokrovskiy, and Szab\'o \cite{NPSz} who showed that there are arbitrarily large degree 3-critical graphs with no cycle of length 23. 
The crucial ingredient of their construction is a particular class of trees called \emph{1--3 trees}. A 1--3 tree is a tree where every vertex has degree either 1 or 3. It was shown in \cite{NPSz} that there exist infinitely many 1--3 trees with no two leaves at distance $20$ from one another, which then yielded the desired degree 3-critical graphs by adding two vertices adjacent to all leaves and to each other. 

Despite their surprising counterexamples, the authors of \cite{NPSz} proved that any degree 3-critical graph with at least six vertices contains a cycle of length 6, and asked whether it might still be the case that degree 3-critical graphs contain many cycle lengths. They posed the following conjecture.

\begin{conjecture}[\!\!{\cite[Conjecture 6.2]{NPSz}}]\label{conj:manycycles}
    Every degree 3-critical graph on $n$ vertices contains cycles of at least $3 \log n + O(1)$ distinct lengths.
\end{conjecture}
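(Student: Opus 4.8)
\noindent\textbf{A plan for \Cref{conj:manycycles}.}
The strategy I would pursue is to realise the set $\mathcal L$ of cycle lengths of an $n$-vertex degree $3$-critical graph $G$ as a union of translates of sumsets built from leaf-to-leaf path lengths of a large tree-like skeleton of $G$, and then to bound $|\mathcal L|$ from below by elementary additive inequalities together with one genuinely new additive estimate. First I would extract from $G$ a subtree $S$ of maximum degree at most $3$ with $\ell=\Omega(n)$ leaves, together with a bounded set $W\subseteq V(G)$ of ``apex'' vertices such that every leaf of $S$ has at least two neighbours in $W$; this is what the Narins--Pokrovskiy--Szab\'o construction looks like, and there $\ell=\Omega(n)$ is tight. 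Writing $D\subseteq\mathbb N$ for the set of distinct leaf-to-leaf path lengths of $S$, one then observes that $\mathcal L$ contains, up to bounded additive constants, the set $D+\{2,3\}$ (from cycles that leave $S$ at the two ends of a leaf-to-leaf path and return either through one vertex of $W$ or through two vertices and a $W$--$W$ edge) and the set $\big(D\mathbin{\widehat{+}}D\big)+4$, where $D\mathbin{\widehat{+}}D=\{d+d':d,d'\text{ are realised by vertex-disjoint leaf-to-leaf paths of }S\}$ (from cycles traversing two disjoint leaf-to-leaf paths and two vertices of $W$); if the extraction produces $|W|\ge 3$ one even obtains higher restricted sumsets, so the essential case is $|W|=2$. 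By the second of our main results, $|D|\ge\log_2\ell=\log_2 n-O(1)$.

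The crux is then the additive estimate
\[
\Big|\big(D+\{2,3\}\big)\;\cup\;\big((D\mathbin{\widehat{+}}D)+4\big)\Big|\;\ge\;3|D|-O(1),
\]
valid for every set $D$ that arises as the leaf-to-leaf distance set of a bounded-degree tree (with its associated disjointness structure), so that $|\mathcal L|\ge 3\log_2 n-O(1)$. Intuitively, $D+\{2,3\}$ contributes about $2|D|$ lengths packed in an interval of length $O(\operatorname{diam}S)$, while $(D\mathbin{\widehat{+}}D)+4$ reaches up to roughly $2\max D$ and, in both the ``$D$ close to an arithmetic progression'' and the ``$D$ spread out'' regimes, contributes about $|D|$ further lengths of the missing parity --- but turning this into a proof requires controlling exactly which pairs of leaf-to-leaf paths of $S$ can be chosen disjoint (large entries of $D$ realised through a common long path cannot be paired with each other), and pinning down all the additive constants. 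This is precisely the additive problem alluded to in the abstract, and it is where I expect the real work to be: the weaker two-family version, bounding $|D+\{2,3\}|$ or $|(D\mathbin{\widehat{+}}D)+4|$ alone, only yields $\Omega(\log n)$ cycle lengths, which is exactly what our first main result already delivers, so closing the gap to the sharp constant $3$ is the heart of the matter.

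The other substantial obstacle is the skeleton-extraction step, which cannot simply invert the Narins--Pokrovskiy--Szab\'o construction, since not every degree $3$-critical graph is a $1$--$3$ tree with two apices attached. Here I would instead exploit the structural facts recalled in the introduction: that $G=T_1\cup T_2$ is a union of two edge-disjoint spanning trees, that $\delta(G)\ge3$, and that $G-v$ is $2$-degenerate --- indeed a $2$-tree whenever $\deg_G(v)=3$ --- for every vertex $v$. Peeling $G-v$ in degeneracy order exposes the branch vertices of a spanning tree of $G$ together with the edges leaving it, and I would aim to show that after discarding a bounded set $W$ of high-degree vertices and restricting to one branch of the remaining forest, what survives is a bounded-degree tree with $\Omega(n)$ leaves, each still adjacent to $W$ at least twice. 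A clean form of this reduction, combined with the additive estimate above, would then give $|\mathcal L|\ge 3\log_2 n-O(1)$, as conjectured; I expect the additive estimate to be the genuinely new ingredient and the reduction to be the more technical but more routine one.
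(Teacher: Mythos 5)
You should first be aware that the statement you are attacking is left open in the paper: \Cref{conj:manycycles} is stated as a conjecture of Narins, Pokrovskiy and Szab\'o, and the paper only proves it up to a constant factor (\Cref{thm:d-3-cCL}, with constant $\tfrac{1}{3+\log 3}$ rather than $3$), by a completely different argument (the ordering of \Cref{lem:orien exist}, Dilworth's theorem, and then either a vine on a long forward path or a pair of fair forward/backward-directed trees sharing a leaf set). So there is no proof in the paper to match yours against; what you have written is a programme, and as a programme it has two genuine gaps, one of which is not merely technical but false as stated.

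The extraction step fails in general. If $|W|=O(1)$ and $\ell=\Omega(n)$ leaves of $S$ each send at least two edges to $W$, then some vertex of $W$ has degree $\Omega(n)$. But there are degree $3$-critical graphs of bounded maximum degree: for instance the squared path ($x_i$ adjacent to $x_{i+1}$ and $x_{i+2}$, adjusted at the ends to reach exactly $2n-2$ edges) is degree $3$-critical, since in any proper induced subgraph the vertex of largest index has at most two neighbours; its maximum degree is $4$. For such graphs no pair $(S,W)$ of the kind you posit exists, so the sumset framework never gets off the ground --- and these bounded-degree graphs are exactly the regime of the extremal Bollob\'as--Brightwell examples showing $3\log n$ is tight, so they cannot be dismissed as a degenerate case. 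Inverting the NPSz construction only covers graphs that genuinely look like a 1--3 tree plus two apices. Secondly, the central additive inequality $\bigl|(D+\{2,3\})\cup((D\mathbin{\widehat{+}}D)+4)\bigr|\ge 3|D|-O(1)$ is only asserted, and for arbitrary sets it is false: if $D=\{0,1,\dots,d\}$ is an interval, the union is contained in $[2,2d+4]$ and has only about $2|D|$ elements. So the claim must exploit the tree structure of $D$ (and the disjointness relation) in an essential way, and no such argument is given; you acknowledge this is ``where the real work is'', which is precisely the point --- the conjecture is not proved by this plan. The one step that does go through is the appeal to \Cref{anylengths} to get $|D|\ge\log_2\ell$ once a bounded-degree subtree with $\ell$ leaves is in hand; but with the reduction broken and the additive estimate unproved, \Cref{conj:manycycles} remains open, and the paper's own contribution is the weaker \Cref{thm:d-3-cCL} via the order-theoretic route described above.
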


A classical construction of Bollob\'as and Brightwell \cite{bollobas1989long} shows that, if true, \Cref{conj:manycycles} is best possible. Our first result proves that \Cref{conj:manycycles} is true up to a constant factor. 

\begin{theorem}\label{thm:d-3-cCL}
     Every degree 3-critical graph on $n$ vertices contains cycles of at least $\frac{\log n}{3+\log 3}+O(1)$ distinct lengths.
\end{theorem}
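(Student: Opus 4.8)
The plan is to exploit the rigid structure of degree $3$-critical graphs. Recall from the introduction that such a graph $G$ decomposes as the edge-disjoint union of two spanning trees; we will also use the closely related (and easily derived) facts that $G$ has minimum degree $3$ while $G-v$ is $2$-degenerate for every vertex $v$, so that $V(G)$ admits an ordering $v_1,\dots,v_n$ in which every $v_i$ with $i\ge 2$ has at most two neighbours among $\{v_{i+1},\dots,v_n\}$. The aim is to extract from $G$ a sub-object of ``tree-plus-apices'' type on which cycle lengths can be read off combinatorially. The model to keep in mind is precisely the Narins--Pokrovskiy--Szab\'o construction: a tree $T$ of maximum degree $3$ together with two adjacent vertices joined to all its leaves, where a leaf-to-leaf path of length $m$ closes up into a cycle of length $m+2$ (or $m+3$). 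Since, by the leaf-to-leaf path length bound announced in the introduction, a tree of maximum degree $3$ with $\ell$ leaves has at least $\log_2 \ell$ distinct leaf-to-leaf path lengths, it would suffice to locate such a configuration with $\ell \ge n^{1/\log 24}$ leaves inside $G$.

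To find it, I would grow a subtree $S\subseteq G$ of maximum degree at most $3$ by a breadth-first search that branches whenever it can: at a current leaf $u$ we use the minimum degree $3$ hypothesis to find a $G$-neighbour of $u$ outside $S$ and try to continue, creating two children when two such continuations are available. The point is that minimum degree $3$, read against the $2$-degeneracy ordering, is exactly what prevents the search from collapsing into a single long path and forces repeated branching, so that $S$ (after pruning branches that fail to close up) has many leaves. The constant $3+\log 3=\log 24$ is designed to be the logarithm of the worst-case multiplicative loss per level of the search: a factor $3$ for the branching at a degree-$3$ vertex and a factor $2^3$ for the length of the path one may be forced to follow before reaching the next usable branch vertex, so that passing to the next level shrinks the relevant vertex count by a factor at most $24$, and hence the depth of $S$, and with it its number of distinct leaf-to-leaf path lengths, is at least $\log_{24} n+O(1)$. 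The $n-1$ edges of $G$ not lying in $S$ are then used to perform the closing-up, and the leaf-to-leaf estimate applied to $S$ delivers the distinct cycle lengths.

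The main obstacle is controlling the interaction between $S$ and the edges of $G$ outside it. These extra edges could both sabotage the branching of the search and, more seriously, create chords and shortcuts that identify distinct leaf-to-leaf path lengths with coinciding cycle lengths; ruling this out efficiently is where the degree-$3$-critical hypothesis must be spent carefully, and it is what pins down the exact constant. A further complication, absent from the clean apex construction, is that $G$ may contain many vertices of large degree, so the closing-up step has to be engineered without assuming a bounded number of apices. I expect the bulk of the work to lie in these two points; given the sub-object, the passage from leaf-to-leaf path lengths to cycle lengths and the bookkeeping producing the $+O(1)$ term should be routine.
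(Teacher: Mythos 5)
Your proposal does not follow the paper's argument and, as you yourself flag, leaves the crucial step unaddressed. You want to grow a max-degree-$3$ subtree $S$ with many leaves, apply the leaf-to-leaf path length bound of Theorem~\ref{anylengths} to $S$, and then ``close up'' leaf-to-leaf paths into cycles using edges outside $S$. The problem is that in a general degree $3$-critical graph there is no analogue of the two apex vertices from the Narins--Pokrovskiy--Szab\'o construction: leaves of $S$ need not have any bounded-length connection outside $S$, and even when such connections exist they will typically have wildly varying lengths, so distinct leaf-to-leaf path lengths in $S$ do not translate into distinct cycle lengths in $G$. This is not a bookkeeping issue to be cleaned up later; it is the heart of the theorem, and your plan has no mechanism for it. (Your derivation of the constant $3+\log 3$ as ``branching factor $3$ times path length $2^3$'' is also not a real argument; it is numerology that happens to match.)

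The paper resolves this difficulty by a genuinely different route. From the degeneracy-type ordering it derives a partial order $\preceq$ on $V(G)$ (via forward paths) and then plays Dilworth: either there is a long chain, i.e.\ a long forward path, from which a \emph{vine} structure yields a cycle in every dyadic length window, giving $\log c - 1$ lengths when the longest forward path has $c$ vertices; or there is a large antichain $L$, in which case Lemma~\ref{lem:2tree} builds \emph{two} trees $S,T \subseteq G$ --- one forward-directed, one backward-directed --- that share exactly the leaf set $L$ but are otherwise disjoint. In that two-tree gadget, every pair of leaf-to-root paths closes into a genuine cycle of $G$ by construction, so there is nothing to ``close up''; after passing to fair subtrees, a direct induction (Lemma~\ref{lem:C1}) gives $\log|L|/\log k$ distinct cycle lengths. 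The constant $3+\log k$ then comes from \emph{balancing} the two cases, i.e.\ choosing $c$ to equalize $\log c - 1$ against $(\log n - 3\log c)/\log k$. The two-tree lemma is the idea that is missing from your proposal, and without it (or some substitute for the closing-up step) the approach does not get off the ground.
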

 
This provides the first bound on the number of cycle lengths as a function of $n$ tending to infinity,  
and arguably can be viewed as confirmation of the original motivation of Erd\H{o}s et al.~\cite{erdos1988cycles} to demonstrate the abundance of cycle lengths in such graphs.
In fact, we establish this result as a corollary of a more general theorem (Theorem~\ref{thm:ord->uppc(G)})
which applies to \emph{degree $k$-critical graphs} for any $k \geq 3$, i.e., $n$-vertex graphs with $(k-1)n - \binom{k}{2} + 1$ edges and no proper induced subgraph with minimum degree at least $k$. This family was introduced by Bollobás and Brightwell \cite{bollobas1989long} as a natural generalization of degree 3-critical graphs, 
and a problem closely related to this family was studied more recently by Sauermann \cite{sauermann}.

The key idea behind the proof of \Cref{thm:d-3-cCL} is to define an appropriate partial ordering on the vertex set of the given graph. By Dilworth's theorem, this either gives a long chain or a long antichain. The first case yields a long path $P$ together with a collection of paths that intersect $P$ in a special way (a structure known as a \emph{vine}). In the second case, we find two large trees that are vertex-disjoint except for the fact that they share the same set of leaves. A careful analysis then yields many cycle lengths in either case. 

Motivated by the connection between degree 3-critical graphs and 1--3 trees that they established, the authors of \cite{NPSz} also formulated two conjectures about leaf-to-leaf path lengths in 1--3 trees. The first of these conjectures is as follows.

\begin{conjecture}[\!\!{\cite[Conjecture 6.3]{NPSz}}, corrected version]\label{conj:anylengths} 
    Every 1--3 tree $T$ of order $n$ has leaf-to-leaf paths of at least $\log (n+2) -1$ distinct lengths. 
\end{conjecture}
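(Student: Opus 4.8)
The plan is to deduce Conjecture~\ref{conj:anylengths} from the general statement recorded in the introduction, that any tree $T$ with maximum degree $\Delta\ge 3$ and $\ell$ leaves realises at least $\log_{\Delta-1}((\Delta-2)\ell)$ distinct leaf-to-leaf path lengths. This reduction is immediate: if a $1$--$3$ tree on $n$ vertices has $i$ internal vertices then the degree sum gives $3i+\ell=2(n-1)$ while $i+\ell=n$, so $i=\ell-2$ and $\ell=(n+2)/2$; applying the general bound with $\Delta=3$ then produces leaf-to-leaf paths of at least $\log_2((n+2)/2)=\log(n+2)-1$ distinct lengths. So it suffices to prove: if $T$ has maximum degree $\Delta$ and the set $D$ of pairwise distances between its leaves has $|D|=k$, then $\ell\le(\Delta-1)^{k+O(1)}$, and with more care the sharp bound claimed above.

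First I would root $T$ at an arbitrary leaf $r$. The point of rooting at a \emph{leaf} is that then every vertex has at most $\Delta-1$ children, so the rooted tree embeds into the infinite $(\Delta-1)$-ary tree. Since $r$ is a leaf, $d(r,w)\in D$ for every other leaf $w$, so all leaves other than $r$ lie at one of at most $k$ depths; for each such depth $a$ let $L_a$ be the set of leaves at depth $a$. Inside a single class $L_a$, any two leaves $u,u'$ satisfy $d(u,u')=2(a-c)$, where $c$ is the depth of their lowest common ancestor, so $c$ is determined by the value $d(u,u')\in D$, and the common ancestors occurring inside $L_a$ lie at only $|D_a|$ depths, where $D_a\subseteq D$ is the set of pairwise distances actually realised inside $L_a$. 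Hence the minimal subtree of $T$ spanning $L_a$ branches at only $|D_a|$ depths; since its branching is at most $\Delta-1$ and all of $L_a$ lies at its bottom level, $|L_a|\le(\Delta-1)^{|D_a|}$. Summing over the at most $k$ classes, $\ell\le 1+\sum_a(\Delta-1)^{|D_a|}\le 1+k(\Delta-1)^{k}$, which already yields $k\ge\log_{\Delta-1}\ell-O(\log\log \ell)$, i.e.\ the conjecture up to a lower-order term.

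The main obstacle is to remove this lower-order loss, that is, to improve the crude estimate $\sum_a(\Delta-1)^{|D_a|}\le k(\Delta-1)^k$ to $\sum_a(\Delta-1)^{|D_a|}=O_\Delta\big((\Delta-1)^k\big)$; equivalently, one must rule out that the classes $L_a$ all have rich internal distance sets. The tool for this should be the \emph{cross-class} constraints: if $u\in L_a$ and $u'\in L_{a'}$ have lowest common ancestor at depth $c$, then $a+a'-2c\in D$, so once $D_a$ is large there is little room left in $D$ for the distances between $L_a$ and the other classes, and in turn for the sets $D_{a'}$. I expect the right way to make this quantitative is a weighting/charging argument: assign class $L_a$ a budget $(\Delta-1)^{|D_a|}$ and charge it against a carefully chosen ``initial segment'' of $D$ that it monopolises, so that the total budget telescopes into a geometric series summing to $O_\Delta((\Delta-1)^k)$; tracking the constants through this should recover the sharp form. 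The finitely many small trees on which the estimate is tight (stars and their iterated ``blow-ups'') are then checked directly, completing the proof of Conjecture~\ref{conj:anylengths}.
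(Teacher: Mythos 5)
Your reduction of the conjecture to the general degree-$\Delta$ bound is correct, and it is exactly the paper's reduction: a $1$--$3$ tree on $n$ vertices has $\ell = (n+2)/2$ leaves, and plugging $\Delta=3$ into $\log_{\Delta-1}((\Delta-2)\ell)$ gives $\log(n+2)-1$.

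However, your attempted proof of the general bound itself (the paper's \Cref{anylengths}) is both a genuinely different approach from the paper's and incomplete. The paper proceeds by induction on the number of leaves: it uses the Helly property for subtrees to choose a vertex $v$ lying on every longest path, and then splits according to whether some leaf is strictly further than half the diameter from $v$; in the balanced subcase it either applies \Cref{lem:same_depth} to the leaves at distance exactly half the diameter from $v$ or discards a constant fraction of leaves and recurses, losing exactly one length per recursion step. Your approach — root at a leaf and count leaves at each of the $k$ admissible depths — is cleaner, and your intermediate estimate $\ell \le 1 + \sum_a (\Delta-1)^{|D_a|} \le 1 + k(\Delta-1)^k$ is correct. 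But this only yields $k \ge \log_{\Delta-1}\ell - O(\log\log\ell)$, which is strictly weaker than the conjecture. You identify precisely this loss and propose to remove it with a "weighting/charging argument" that makes the classes $L_a$ pay geometrically against portions of $D$ they "monopolise," but this argument is never specified. This is not a cosmetic detail: it is the whole difficulty, and it is not obvious it works as stated, since the cross-class constraint $a+a'-2c\in D$ involves LCA depths $c$ that are free parameters of the tree rather than quantities directly controlled by $D_a$ and $D_{a'}$, so a class with rich $D_a$ does not visibly prevent another class from also having rich $D_{a'}$. Until that step is supplied, you have a correct proof of a weaker statement, not of \Cref{conj:anylengths}.
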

Here and throughout the rest of this paper, the length of a path is equal to the number of edges of the path, and we consider a single vertex to be a path of length 0.

The original form of \Cref{conj:anylengths} in \cite{NPSz} asks for at least $\log n$ distinct lengths, but as stated this is false, as the following example shows. For any $d \geq 2$, consider the (unique) 1-3 tree $T$ in which, for some root $r \in V(T)$, every leaf is at distance precisely $d$ from $r$. It is not hard to see that $T$ contains $3 \cdot 2^{d} -2$ vertices but only $d+1 < \log(3 \cdot 2^{d} -2)$ distinct leaf-to-leaf path lengths (namely, the ones in $\{0,2,4,\dots, 2d\}$). This example also shows that \Cref{conj:anylengths} is tight whenever $n=3\cdot 2^{d}-2$ for some $d\geq 2$.

Our second result resolves~\Cref{conj:anylengths} in a strong form. Our proof works for arbitrary trees, and gives a bound depending on the maximum degree. Consider, however, for any $n>\Delta \geq 2$, the tree obtained from a star $S_{\Delta}$ by subdividing an edge $n - \Delta - 1$ times. This yields a tree with $n$ vertices and maximum degree $\Delta$ with only three distinct leaf-to-leaf path lengths, so we cannot expect to give a bound in terms of just $n$ and $\Delta$. Instead, we require control over the number of \emph{leaves}, say $\ell$, of the tree.

\begin{theorem}\label{anylengths}
    Let $T$ be a tree with maximum degree $\Delta \ge 3$ and $\ell$ leaves. 
    Then $T$ has at least $\log_{\Delta-1}\, ((\Delta-2)\ell)$ distinct leaf-to-leaf path lengths. 
\end{theorem}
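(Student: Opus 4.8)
The plan is to establish the equivalent but slightly cleaner estimate
\[
\ell \le S_{|L|},\qquad\text{where } S_k := 1+(\Delta-1)+\cdots+(\Delta-1)^{k-1}=\tfrac{(\Delta-1)^k-1}{\Delta-2},
\]
and $L$ denotes the set of leaf-to-leaf path lengths of $T$; since $S_{|L|}<(\Delta-1)^{|L|}/(\Delta-2)$, this gives $|L|\ge\log_{\Delta-1}((\Delta-2)\ell)$. The crucial feature of $S_k$ is the recursion $S_{k+1}=1+(\Delta-1)S_k$, and more generally $S_{k+a}=S_a+(\Delta-1)^aS_k$. It is convenient to pass first to the ``topological'' model, suppressing all degree-$2$ vertices and recording edge-lengths as positive integer weights: this changes neither the leaves nor the leaf-to-leaf path lengths, but forces every internal vertex to have degree at least $3$. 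I would then prove $\ell(T)\le S_{|L(T)|}$ for all weighted trees with at least one edge and maximum degree at most $\Delta$, by induction on the number of leaves.

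The base cases $\ell\le 3$ (paths and spiders, where $|L|\ge 2$ and $S_2=\Delta$) are immediate. For the inductive step, take a leaf $v$ at one end of a longest path of $T$, and let $u$ be its neighbour, necessarily of degree $\ge 3$; set $a\ge 1$ equal to the weight of $uv$. Deleting $u$ splits off subtrees $T_1,\dots,T_s$ rooted at the other neighbours $w_1,\dots,w_s$ of $u$, with $2\le s\le\Delta-1$ and $\ell(T)=1+\sum_j\ell(T_j)$; let $T_1$ be the one through which the longest path continues. A one-line computation, using that $T_1$ together with $u$ and $v$ realises the diameter of $T$, shows that for each $j\ge 2$ the subtree $T_j$ has (weighted) height at most $a-1$, hence at most $(\Delta-1)^{a-1}$ leaves. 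If every $T_j$ is a single vertex then $T$ is a ``broom'', $\ell=1+s\le\Delta=S_2$, and we are done; otherwise the induction hypothesis applied to $T_1$ gives $\ell(T_1)\le S_{|L(T_1)|}$, so
\[
\ell(T)\le 1+S_{|L(T_1)|}+(\Delta-2)(\Delta-1)^{a-1}.
\]

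To close the induction one needs this to be at most $S_{|L(T)|}$. The mechanism that should make this work is that the ``new'' material of $T$ — the pendant edge of weight $a$, the branch vertex $u$, and the shallow subtrees $T_2,\dots,T_s$ — creates many leaf-to-leaf path lengths absent from $T_1$: the distances from $v$ into $T_1$ run over $a+e_1+D_1$ (with $e_1$ the weight of $uw_1$ and $D_1$ the set of leaf-depths of $T_1$), distances from $v$ into the other $T_j$ and between distinct $T_i,T_j$ give yet more, and the target outcome is an estimate of the shape $|L(T)|\ge|L(T_1)|+a$, which by $S_{k+a}=S_a+(\Delta-1)^aS_k$ is exactly what is needed. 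The crux — and the step I expect to be genuinely delicate — is precisely this counting: it is \emph{false} that a peeled subtree always has strictly fewer distinct path lengths than $T$ (one can construct trees, even in the topological model, with $L(T_1)=L(T)$), so one cannot simply charge one path length per level of the recursion. The proof therefore has to amortise carefully across $T_1,\dots,T_s$ and the pendant edge simultaneously — or else abandon the recursion in favour of directly encoding each leaf of $T$ by a short word over an alphabet of size $\Delta-1$ whose length is controlled by $|L|$, the difficulty there being to verify injectivity. Throughout I would keep the extremal example in view (the star $K_{1,\Delta}$: $\ell=\Delta$, $|L|=2$, $S_2=\Delta$), since it shows there is no slack to spare in the small cases.
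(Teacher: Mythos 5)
Your approach is genuinely different from the paper's, and you set it up carefully, but it has a gap that you yourself name without closing. You reformulate the claim as $\ell \le S_{|L|}$ (with $S_k = 1 + (\Delta-1)+\cdots+(\Delta-1)^{k-1}$ and $L$ now denoting the set of leaf-to-leaf path lengths), pass to the weighted ``topological'' model, and recurse by peeling off a diameter-realising leaf $v$ and its branch vertex $u$. The reformulation is valid, the height bound on the side-trees $T_2,\dots,T_s$ (and hence $\sum_{j\ge2}\ell(T_j)\le(\Delta-2)(\Delta-1)^{a-1}$) is correct, and the identity $S_{k+a}=S_a+(\Delta-1)^a S_k$ would close the induction provided $|L(T)|\ge|L(T_1)|+a$. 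By contrast, the paper's proof locates, via a Helly-type lemma, a vertex on \emph{every} longest path, deletes a set of leaves at extremal distance from it so as to shrink the diameter while keeping a $\Theta(1)$ fraction of the leaves, and defers the case where almost all leaves sit at a single extremal depth to a separate counting lemma (\Cref{lem:same_depth}).

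The missing step — ``the target outcome is an estimate of the shape $|L(T)|\ge|L(T_1)|+a$'' — is not just delicate; it is false, and your induction step breaks on concrete examples. Take $\Delta=3$ and let $T$ be two perfect binary trees of depth $d\ge 3$ joined by an edge between their roots, so $L(T)=\{0,2,\dots,2d\}\cup\{2d+1\}$ and $|L(T)|=d+2$. Any deepest leaf $v$ has $a=1$, $T_2$ a single vertex (its sibling), and $T_1$ the rest. The untouched half of $T$ still realises every even length up to $2d$, and cross-paths still realise $2d+1$, so $L(T_1)=L(T)$ and $|L(T)|=|L(T_1)|$, not $|L(T_1)|+1$. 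Your chain of inequalities then yields $\ell(T)\le 1+S_{d+2}+1=S_{d+2}+2$, which exceeds the target $S_{|L(T)|}=S_{d+2}$, so the induction does not close — even though the target inequality $\ell(T)=2^{d+1}\le 2^{d+2}-1=S_{d+2}$ is in fact true, because $T_1$ itself is far from extremal. That last observation is exactly the ``amortisation'' you point towards, but it requires a strengthened induction hypothesis tracking the slack in $\ell(T_1)\le S_{|L(T_1)|}$ whenever the peeling fails to produce new lengths, and neither that invariant nor the alternative injective-encoding idea is formulated or proved. As written, the central counting lemma of your argument is absent, and the version you state is refuted by the example above, so this is a genuine gap rather than a routine detail to fill in.
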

\Cref{anylengths} for \(\Delta = 3\) implies \Cref{conj:anylengths} since any 1--3 tree on \(n\) vertices has precisely \(\frac{n+2}{2}\) leaves. More generally, our result is tight whenever $\ell = \Delta (\Delta - 1)^{d-1}$ for some $d \geq 2$, as demonstrated by the tree $T$ in which each vertex has degree 1 or \(\Delta\) and each leaf is at distance precisely $d$ from some root $r \in V(T)$ (whose leaf-to-leaf path lengths are $0, 2, \dots, 2d$). In fact, noticing that $T$'s leaves can be grouped into $(\Delta-1)$-tuples of sister leaves that share a neighbour, and that deleting at most $(\Delta-2)$ leaves in each tuple does not affect the path lengths of the tree, we may construct for each $\ell'>\Delta(\Delta-1)^{d-2}$ a tree $T'$ with maximum degree $\Delta$ and $\ell'$ leaves and only $d+1$ distinct leaf-to-leaf path lengths. This shows that \Cref{anylengths} is tight for all values of $\ell$ and $\Delta$, up to an additive term of 1. The proof proceeds by finding a suitable choice of root vertex through Helly's theorem for trees, deleting the leaves that are at a certain distance from the root, and then applying induction.

While \Cref{conj:anylengths} imposes no restrictions on the lengths considered, the final conjecture of Narins, Pokrovskiy and Szab\'o \cite{NPSz} that we address asks to determine how many \emph{short} leaf-to-leaf path lengths can be found. They conjectured that for 1--3 trees, one can find path lengths which are dense in an interval of the form $[0,N]$.

\begin{conjecture}[\!\!{\cite[Conjecture 6.4]{NPSz}}]\label{conj:smalllengths}
    There exist a constant $\alpha >0$ and a function $N = N(n)$ tending to infinity as \(n \rightarrow \infty\) such that every 1--3 tree of order $n$ contains at least $\alpha N$ distinct leaf-to-leaf path lengths between 0 and $N$.
\end{conjecture}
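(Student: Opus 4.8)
\medskip
\noindent\textbf{Proof strategy.} The plan is to prove \Cref{conj:smalllengths} by showing that a 1--3 tree on $n$ vertices necessarily contains many leaf-to-leaf paths of length below $N$, for a suitable $N=N(n)\to\infty$. The first move is to trade the order hypothesis for one on the diameter: a 1--3 tree on $n$ vertices has $\tfrac{n+2}{2}$ leaves, and a tree with maximum degree $3$ and diameter $D$ has only $O(2^{D/2})$ leaves, so every 1--3 tree on at least $2^N$ vertices has diameter $\Omega(N)$. It therefore suffices to prove a statement of the shape: \emph{there are $\delta>0$ and $\phi$ with $\phi\to\infty$ such that every 1--3 tree of diameter at least $d$ has at least $\phi(d)$ distinct leaf-to-leaf path lengths in $[0,\delta d]$}; scaling the interval down and optimizing $N=N(n)$ and $\alpha$ against $\phi$ then yields the conjecture.

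Fix a diametral path $P=u_0u_1\cdots u_d$ of a 1--3 tree $T$. Each internal $u_i$ carries a side subtree $S_i$, with $S_i\cup\{u_i\}$ again a 1--3 tree; write $f_i\ge 1$ for the distance from $u_i$ to its nearest leaf in $S_i$ and $A_i$ for the set of all such distances, and note that diametrality forces $f_i\le\min(i,d-i)$. Routing leaf-to-leaf paths through $P$ shows that the length set $\mathcal L(T)$ contains $0$, $d$, every $i+a$ and every $(d-i)+a$ with $a\in A_i$, every $(j-i)+a+b$ with $i<j$, $a\in A_i$, $b\in A_j$, and recursively all of $\mathcal L(S_i\cup\{u_i\})$. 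Thus $\mathcal L(T)$ is essentially a union of sumsets of the depth sets $A_i$, and minimizing $|\mathcal L(T)\cap[0,\delta d]|$ turns into an additive extremal problem --- the additive question of independent interest that drives the result.

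With this in hand I would run a dichotomy governed by a threshold $\theta=\theta(d)$. If $f_i<\theta$ for all $i$, then along $P$ the map $i\mapsto i+f_i$ lives in $[1,d+\theta]$ and moves by at most $\theta+1$ between consecutive indices, so it realizes $\Omega(d/\theta)$ distinct values, many of them in $[1,\delta d]$; this already supplies $\Omega(d/\theta)$ short lengths (and the pairwise sums $(j-i)+f_i+f_j$ can be exploited for still more). If instead some $f_{i_0}\ge\theta$, then all leaves of $S_{i_0}$ are far from $u_{i_0}$, and descending along a path $w_0=u_{i_0},w_1,\dots,w_{f_{i_0}}$ from $u_{i_0}$ to a nearest leaf one finds that the side subtree at $w_j$ has all its leaves at distance at least $f_{i_0}-j$ from $w_j$ --- a self-similar ``everywhere-deep'' configuration. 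At this point one either harvests many short lengths directly --- for instance essentially all even values up to $\approx 2f_{i_0}$, which appear precisely when these nested subtrees are as shallow as the constraints allow --- or one recurses into a strictly deeper subtree; the interplay between the number of recursion levels, the $\Omega(d'/\theta(d'))$ savings available once the shallow case is reached, and the decay of the diameter scale fixes the function $\phi$. (Invoking \Cref{anylengths} inside a deep subtree is tempting but unhelpful, as it bounds the number of leaf-to-leaf path lengths without any control on their magnitudes.)

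The step I expect to be the main obstacle is this last balancing act. As sketched, the recursion seems to bleed a polynomial factor, so I would anticipate it delivering only around $\phi(d)=d^{2/3}$ short lengths, whereas \Cref{conj:smalllengths} demands the linear count $\alpha N$. The decisive question is the precise growth rate of the additive extremal problem: how economically can an ``everywhere-deep'' configuration realize its short leaf-to-leaf path lengths? Nailing this down --- or finding a cleverer way to aggregate short lengths across recursion levels so that no polynomial factor is lost --- is what would close the gap to the full conjecture; by contrast, the ancillary steps (base cases of the induction, side subtrees of intermediate depth, and making the order-to-diameter passage quantitatively tight) look routine.
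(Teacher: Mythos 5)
The statement you are trying to prove is false, and the paper's treatment of \Cref{conj:smalllengths} is a \emph{disproof}, not a proof: \Cref{thm:upper-bound-lengths} constructs, for every $N$ and every even $n\geq N$, an $n$-vertex 1--3 tree with only $O(N^{c})$ distinct leaf-to-leaf path lengths in $[0,N]$, where $c=(2-\log 10/\log 13)^{-1}\approx 0.9073<1$. So the "decisive question" you isolate at the end --- whether the balancing act can be improved from roughly $d^{2/3}$ short lengths up to the linear count $\alpha N$ --- has a negative answer: no choice of $N(n)\to\infty$ and $\alpha>0$ can work, because the counterexamples exist at every scale. Your instinct that the problem reduces to an additive extremal question is exactly right, and it is the same connection the paper exploits, but the resolution goes the opposite way from what your plan needs. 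The paper's construction (\Cref{prop:additive-to-tree}) appends perfect binary trees to a path with depths $a_i=u_i+v_i$ coming from sets $U,V\subseteq\mathbb{Z}$ with $U-V=[m]$ and $|U+V|=m^{\beta}$ for $\beta=\log 10/\log 13$; the cross-path distances then land in a dilate of $U+V$ plus a few multiples of $m$, so an "everywhere-shallow" configuration can realize its short lengths extremely economically, with only $m^{\beta}\cdot O(M/m)$ values in $[0,M]$. This is precisely the Ruzsa-type phenomenon ($|A+B|$ much smaller than $|A-B|$) that your shallow case ($f_i<\theta$ for all $i$) cannot rule out: the map $i\mapsto i+f_i$ taking $\Omega(d/\theta)$ values does not force many \emph{short} lengths, because the relevant sums $(j-i)+f_i+f_j$ can collapse onto a sparse sumset.

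What your sketch does correctly capture is the lower-bound side: your dichotomy (all side-trees shallow versus some everywhere-deep subtree), with an Erd\H{o}s--Szekeres-type count in the shallow case, is essentially the paper's proof of \Cref{lots}(i), which yields \Cref{smalllengths}, i.e.\ $\Omega(N^{2/3})$ short lengths for trees on at least $2^{N/2}$ vertices. So the honest conclusion of your approach is a theorem of the form "polynomially many, but sublinearly many, short lengths are guaranteed", with the truth pinned between exponents $2/3$ and $\approx 0.9073$; the conjecture itself, as stated, should be abandoned rather than patched.
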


Our next result disproves \Cref{conj:smalllengths} in the following strong form, namely, with a poly-sublinear upper bound. 

\begin{theorem}\label{thm:upper-bound-lengths}
    There exists an absolute constant $c\in (0,1)$ such that the following holds.
    
    For all $N \geq 1$ and all even $n\geq N$, there exists an $n$-vertex 1--3 tree with $O(N^c)$ distinct leaf-to-leaf path lengths between 0 and $N$.
\end{theorem}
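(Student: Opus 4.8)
The plan is to build the desired 1--3 tree recursively by a substitution (or "self-composition") operation on a carefully chosen small gadget tree, so that the set of leaf-to-leaf path lengths that appear below a given threshold grows only polynomially in the threshold while the tree itself grows exponentially. Concretely, I would fix a finite 1--3 tree $G$ with two distinguished leaves $u,v$ (a "$u$--$v$ block"), let $D$ be the distance between $u$ and $v$ in $G$, and consider the operation that takes a $u$--$v$ block $B$ and produces a new block $B \circ B$ by identifying the $v$-leaf of one copy of $B$ with the $u$-leaf of another copy (so distances add: the new $u$--$v$ distance is $2D$). Iterating $k$ times yields blocks $B_k$ whose $u$--$v$ distance is $2^k D$. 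The key point is bookkeeping: I want to track $L_k$, the set of leaf-to-leaf path lengths occurring in $B_k$, and in particular $L_k \cap [0,N]$. A leaf-to-leaf path in $B_k$ either lives entirely inside one of the two halves, or crosses the central identified vertex, in which case its length is (distance from a leaf to $v$ in the left half) $+$ (distance from $u$ to a leaf in the right half). So if $S_k$ denotes the set of leaf-to-$v$ distances (equivalently $u$-to-leaf distances, by symmetry of the gadget) in $B_k$, then $L_{k+1} = L_k \cup (S_k + S_k)$ and $S_{k+1} = S_k \cup (S_k + D \cdot 2^{k})$ or something of this shape, depending on how one sets up the recursion.

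The heart of the argument is then an \emph{additive-combinatorics estimate}: I must choose the gadget $G$ so that the iterated sumsets $S_k + S_k$ contribute few new elements below $N$. This is exactly the "connection to an additive problem" advertised in the abstract. The natural choice is to base the gadget on a set with very slow sumset growth under this kind of self-similar scaling --- for instance, the classical construction (à la Erdős or the "no-three-term" type sets, or a $B_h^-$-flavoured set) of a set $A \subseteq \{0,1,\dots,m\}$ with $|A| = m^{\Omega(1)}$ whose full "multiscale" sumset $\bigcup_{k} (A + 2^k A + 2^{2k}A + \cdots)$ restricted to $[0,N]$ has size $O(N^c)$ for some explicit $c<1$. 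Translating such a set into distances in a 1--3 tree: each element $a \in A$ should correspond to a root-to-leaf distance realisable inside the gadget, which one arranges by taking a path of the appropriate length and hanging a small balanced 1--3 subtree off its end so that degree constraints (every vertex has degree $1$ or $3$) are met and no unwanted short path lengths are created. After $k = \Theta(\log n)$ iterations the block has $\Theta(n)$ vertices, the $u$--$v$ distance is $\Theta(n)$, and the count of leaf-to-leaf path lengths below $N$ is $O(N^c)$ uniformly; one finally caps off the two remaining degree-$1$ ends of $B_k$ (the $u$ and $v$ leaves) by attaching tiny gadgets to restore the 1--3 property, checking that this changes the length set by only $O(1)$. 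Adjusting parities and padding with a subdivided path handles arbitrary even $n \geq N$.

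The main obstacle --- and the step I would expect to occupy most of the work --- is making the additive construction and the tree realisation cohere: one needs a single set $A$ (with $|A|$ polynomial in its maximum) that is simultaneously \emph{realisable} as a controlled family of leaf-to-root distances in a bounded-size 1--3 gadget and has the property that its self-similar dilated sumsets stay poly-sublinearly small in every window $[0,N]$. Proving the sumset bound likely requires a digit/base-$b$ argument: represent elements of $A$ by restricting which digits can appear in base $b$ (so $|A|$ is exponential in the number of digit-positions while the sumset only mildly inflates the allowed digit patterns), and then the constant $c$ comes out as $\log_b(\text{something})$, optimised over $b$. A secondary, more routine but genuinely fiddly obstacle is the degree bookkeeping: every subdivision or path one introduces to realise a distance must be "thickened" into a 1--3 tree without inadvertently creating two leaves at a new short distance, so one has to argue that the only leaf-to-leaf lengths ever produced are exactly those the additive calculation predicts (plus a bounded additive slack). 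Once these two pieces are in place, the recursion and the final size/parity adjustments are straightforward.
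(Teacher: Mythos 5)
You correctly identify the key theme of the paper's approach---reducing the tree problem to an additive question about sumsets, solved via a digit/base-$b$ construction---but the recursive self-composition framework you propose has a structural flaw that no choice of gadget can fix. Because a tree has a unique path between any two vertices, if $B_{k+1}$ is formed by identifying the $v$-leaf of one copy of $B_k$ with the $u$-leaf of another, the $u$--$v$ distance doubles ($D_k = 2^k D$) and your recursion for the leaf-to-$v$ distance set becomes exactly $S_{k+1} = S_k \cup (S_k + D_k)$, forcing $S_k = S_0 + D\cdot\{0,1,\dots,2^k - 1\}$: the gadget set $S_0$ translated along a \emph{complete} arithmetic progression of step $D$. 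Consequently $S_k + S_k = (S_0 + S_0) + D\cdot\{0,\dots,2^{k+1}-2\}$, which has $\Theta(N/D)$ elements in $[0,N]$ as soon as $2^k D \gtrsim N$---a linear count, not sublinear. The ``multiscale sumset'' $A + 2^\ell A + 2^{2\ell}A + \cdots$ you envision does not arise: the offset at each scale is forced to be the single number $D_k$, not a sparse digit set you get to choose. (You also flag but do not resolve the degree-$2$ defect at the identified vertex, and your final ``padding with a subdivided path'' step would reintroduce degree-$2$ vertices.)

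The paper sidesteps this rigidity by using a flat caterpillar rather than a recursion: a long path $v_0\cdots v_{t+1}$ with a perfect binary tree of depth $a'_i$ attached at each interior $v_i$, where $(a'_i)$ repeats a period-$m$ block $(a_1,\dots,a_m)$. The leaf-to-leaf distance from the tree at $v_i$ to the one at $v_j$ (with $i<j$) is $a'_i + (j-i) + a'_j$, and the crucial idea is to choose the depths via a difference decomposition: write $i = u_i - v_i$ and set $a_i = u_i + v_i$, so that this distance equals $2v_i + 2u_j$ (plus multiples of $m$ from the periodicity) and hence lies in $2(U+V)$. The additive lemma one then needs is: find $U,V \subseteq \mathbb{Z}$ with $U - V = [m]$ (ensuring all depths are positive) but $|U+V| = m^\beta$ for some $\beta < 1$. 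A base-$13$ digit construction---digit sets $X,Y$ with $X-Y = [0,12]$ and $|X+Y|=10$---gives $\beta = \log 10/\log 13$. The freedom to choose the depth at each position along the path independently, digit by digit, is exactly what your self-composition removes, and it is where the sublinearity actually comes from.
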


The proof of \Cref{thm:upper-bound-lengths} yields $c=\left(2-\frac{\log 10}{\log 13} \right)^{-1}\approx0.9073$. We complement this result by also providing a polynomial lower bound on the number of short lengths that may be found, which shows that we cannot take $c < 2/3$ in \Cref{thm:upper-bound-lengths}. 

\begin{theorem}\label{smalllengths} For all $N \geq 1$ and all even $n \geq 2^{N/2}$, every $n$-vertex 1-3 tree contains leaf-to-leaf paths of $\Omega(N^{2/3})$ distinct lengths between $0$ and $N$.
\end{theorem}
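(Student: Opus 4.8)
The plan is to run an induction on the scale $N$ that roughly doubles it at each step. Write $P(H)$ for the set of leaf-to-leaf path lengths of a tree $H$. I would show that there is a constant $\theta\in(0,1)$ such that every $1$--$3$ tree $T$ with at least $2^{N/2-1}$ leaves (which holds whenever $n\ge 2^{N/2}$) contains at least $c\,N^{2/3}$ distinct leaf-to-leaf path lengths inside the ``annulus'' $(\theta N,N]$, and moreover contains a pendant subtree $T'$ with at least $2^{\theta N/2-1}$ leaves to which the statement applies with scale $\theta N$. Since $P(T')\cap[0,\theta N]\subseteq[0,\theta N]$ is disjoint from the annulus $(\theta N,N]$, the path lengths produced at the successive scales $N,\theta N,\theta^2 N,\dots$ are pairwise distinct, so summing the geometric series $\sum_{i\ge 0}c\,(\theta^i N)^{2/3}=\Omega(N^{2/3})$ yields the theorem; the induction bottoms out once the scale drops below a constant, where there is nothing to prove.

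Two standard ingredients feed the inductive step. First, a $1$--$3$ tree with $\ge 2^{N/2-1}$ leaves has diameter $\ge N-O(1)$: rooting it at a leaf turns it into a binary tree, and a binary tree on $m$ leaves has diameter $\ge 2\log_2 m-O(1)$. Fix a longest path $P=v_0v_1\cdots v_L$ with $L\ge N-O(1)$, and let $T_i$ be the pendant subtree hanging at $v_i$ (for $0<i<L$ this is nonempty, and by maximality of $P$ its depth $D_i:=\max_{u\in T_i}d(v_i,u)$ satisfies $D_i\le\min(i,L-i)$). Second, a repackaging of \Cref{anylengths}: writing $k:=|P(T)\cap[0,N]|$ for the quantity we wish to bound below, any subtree $S\subseteq T$ of maximum degree $\le 3$ all of whose leaf-to-leaf paths have length $\le N$ has at most $2^{k}$ leaves, because its leaf-to-leaf path lengths are leaf-to-leaf path lengths of $T$ lying in $[0,N]$. (The pendant subtrees are themselves not $1$--$3$ trees, their roots having degree $2$, but \Cref{anylengths} is stated for all trees of maximum degree $3$, so this is harmless.) Taking $S$ to be a window $v_a\cdots v_{a+w}$ of the spine together with those $T_i$ of depth $\le w$ shows that the total number of leaves in the ``shallow'' pendant subtrees over any length-$w$ window of the spine is at most $2^{k}$.

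The core of the argument is a dichotomy near the two ends of $P$, with the window length $w$ balanced against the depth threshold; this balance is what produces the exponent $2/3$. Using $v_L$ as an anchor, the path from $v_L$ to the nearest leaf of $T_i$ has length $(L-i)+c_i$ with $c_i\ge 1$; if these lengths take few values then, by pigeonhole, $c_i$ is forced to grow like an arithmetic progression of slope $1$ along a long sub-window, and then the path lengths $c_i+(j-i)+c_j$ between the nearest leaves of pairs $T_i,T_j$ in that sub-window take many distinct values — on the order of $w$ of them, of controlled size — so already near one end one finds $\Omega(N^{2/3})$ distinct lengths in the annulus. Otherwise the shallow pendant subtrees near the ends are, by the $2^{k}$-per-window bound, too sparse to account for the $2^{N/2-1}$ leaves of $T$, so the bulk of the leaves lies in a single deep pendant subtree $T_{i^*}$ of depth exceeding $N$; choosing $i^*$ far enough along the spine forces every leaf-to-leaf path joining $T_{i^*}$ to the rest of $T$ to have length $>N$, so $T_{i^*}$ (still with $\ge 2^{N/2-1}$ leaves) serves as $T'$ with a scale reduced by a constant factor, and its short path lengths are a subset of those of $T$.

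The main obstacle is precisely the presence of very deep pendant subtrees: one of depth much larger than $N$ can contain astronomically many leaves while contributing almost no path lengths of length $\le N$, so the recursion makes no visible progress unless it is interleaved with the additive-flavoured dichotomy above. Making the two pieces of bookkeeping compatible — the geometric decrease of the scale, and the disjointness of the freshly found annulus lengths from those produced by the recursive call — is the most delicate point, and tuning the constants, the window length, and the intervals so that each scale genuinely contributes $\Omega(N^{2/3})$ new lengths is where most of the work lies.
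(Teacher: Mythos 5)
Your high-level plan is a scale recursion, which is genuinely different from the paper's approach, and unfortunately the recursion as described does not close.

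The paper proves this statement via Theorem~\ref{lots}(i): fix a longest path, take its length-$N/2$ initial segment, and apply a clean dichotomy at a single scale. In the ``all $T_i$ have a shallow leaf'' case it uses \Cref{lem:same_depth} is not needed; rather it applies the Erd\H{o}s--Szekeres-based \Cref{additive}. In the ``some $T_i$ has all leaves deep'' case it does \emph{not} recurse: it finds a maximal binary subtree inside that $T_i$, observes (via maximality of the path) that all its leaves lie within distance $N/2$ of $v_i$, pigeonholes to get $\ge 2^{N^{2/3}/3}$ leaves at a single depth $d\le N/2$, and applies \Cref{lem:same_depth} to extract $\Omega(N^{2/3})$ lengths all $\le 2d \le N$. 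That last step is the key idea your proposal is missing, and it is precisely what your recursion is trying to replace.

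There are several concrete problems with the replacement. First, your first paragraph asserts that at every scale you find \emph{both} $\Omega(N^{2/3})$ lengths in the annulus $(\theta N, N]$ \emph{and} a subtree $T'$ on which to recurse, but your third paragraph actually establishes an either/or dichotomy: either the pigeonhole argument produces many lengths near one end, \emph{or} you locate a deep pendant subtree and recurse. If it is genuinely either/or, a run that falls into the ``otherwise'' branch at every scale produces no lengths at all before the recursion bottoms out, and the geometric series is empty. Second, the assertion that ``the bulk of the leaves lies in a \emph{single} deep pendant subtree $T_{i^*}$ of depth exceeding $N$'' is unjustified: nothing rules out the surplus leaves being spread across many distinct deep subtrees each with a negligible share (the longest path may have length vastly exceeding $N$, so pigeonholing over positions gives nothing), and even if a single $T_{i^*}$ dominated, the window-based argument only forces its depth to exceed your window length $w \le N$, not $N$ itself. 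Third, even granting all of this, the hypothesis you need to recurse — a subtree with $\ge 2^{\theta N/2 - 1}$ leaves whose short leaf-to-leaf path lengths lie in $[0,\theta N]$ — is not delivered: a subtree in which every leaf is at depth $>h\approx N^{2/3}$ only guarantees $\ge 2^{h-1}$ leaves, which is far below $2^{\theta N/2-1}$ for any fixed $\theta>0$; and the claim that ``every leaf-to-leaf path joining $T_{i^*}$ to the rest of $T$ has length $>N$'' requires every leaf of $T_{i^*}$ to be deep (large minimum depth), not merely that its maximum depth $D_{i^*}$ is large. Finally, the pigeonhole step as written yields a constant subsequence of $c_i - i$ of size $w/k$, and hence only $\max(k, w/k) \ge \sqrt{w}$ distinct lengths; to reach $N^{2/3}$ from a window of length $w\le N$ you need the blocking argument of \Cref{additive} (disjoint blocks of length comparable to the depth bound $h$, with gaps so the contributions don't collide), which you allude to with ``this balance is what produces the exponent $2/3$'' but do not carry out. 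To repair the proof, replace the recursion entirely with the paper's direct handling of the deep-subtree case via \Cref{lem:same_depth}, and make the blocking in the shallow case explicit.
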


In fact, \Cref{smalllengths} is an immediate corollary of a more general statement about trees with no vertices of degree 2. Given a tree $T$ and a leaf $v \in V(T)$, we say that $v$ \emph{witnesses} the length $\ell$ if there is a leaf-to-leaf path of length $\ell$ containing $v$ (as an endpoint).

\begin{theorem}\label{lots}
For all $N \geq 1$ sufficiently large, both of the following statements hold. 
\begin{enumerate}[label=(\roman*)]
    \item Let $T$ be a tree containing no vertex of degree 2. If $T$ contains a path of length at least $N/2$, then $T$ contains $\Omega(N^{2/3})$ leaf-to-leaf paths of
    distinct lengths between $0$ and $N$, all witnessed by the same leaf $v \in V(T)$. 
    \item\label{lots-upper-bound}
    For all even $n$, there exists an $n$-vertex 1--3 tree in which no leaf witnesses more than $O(N^{2/3})$ distinct lengths between 0 and $N$. 
    \end{enumerate}
\end{theorem}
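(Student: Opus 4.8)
The plan is to treat the two halves of the statement separately. For part (i), I would start from a longest path $P=u_0u_1\cdots u_m$ of $T$, which has length $m\ge N/2$ and leaf endpoints $u_0,u_m$. Since $T$ has no vertices of degree $2$, every internal $u_i$ carries a pendant subtree $B_i$, and inside $B_i$ each vertex other than $u_i$ still has degree $1$ or at least $3$; rooting $B_i$ at $u_i$ this forces a ``complete-binary-like'' skeleton, so that if $k_i$ is the least depth of a leaf of $B_i$ (measured from $u_i$) then $B_i$ has at least $2^{k_i-1}$ vertices and, by an inductive argument using the absence of degree-$2$ vertices, contains a leaf witnessing $\Omega(k_i)$ distinct leaf-to-leaf path lengths. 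Moreover the longest-path property forces $k_i\le\min(i,m-i)$, which is exactly what is needed to keep all the lengths we exhibit below $N$ (in particular any leaf-to-leaf path inside $B_i$ with $i\le N/2$ has length at most $2k_i\le N$). For the deduction of Theorem~\ref{smalllengths}, note that a $1$--$3$ tree on at least $2^{N/2}$ vertices has maximum degree $3$, hence radius, and so diameter, at least roughly $N/2$, so the hypothesis of part (i) applies.

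Next I would fix a leaf $v$ maximising the number $K$ of distinct lengths in $[0,N]$ that it witnesses and aim to show $K=\Omega(N^{2/3})$. Looking at the endpoint $u_0$: for a leaf $w$ of $B_i$ at depth $d$ one has $\mathrm{dist}(u_0,w)=i+d\le 2i\le N$ whenever $i\le N/2$, so $u_0$ witnesses the whole set $\bigcup_{i\le N/2}(i+D_i)$, where $D_i$ is the set of leaf-depths of $B_i$; in particular $\lvert\{\,i+k_i:i\le N/2\,\}\rvert\le K$. The symmetric analysis at $u_m$, together with a leaf deep inside a branch near the middle of $P$ (which provides the analogous ``shifted'' sums $\{\,\lvert i-i'\rvert+k_{i'}\,\}$), shows that several such sum-sets all have size $O(K)$, while the bushiness estimate above gives $k_i\le K+O(1)$ for every relevant $i$. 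This reduces part (i) to a purely additive statement about sequences $(k_i)$ with $1\le k_i\le\min(i,m-i)$ whose shifted sum-sets $\{i+k_i\}$ and $\{i-k_i\}$ are both small. I expect the crux to be extracting the exponent $2/3$ from this: the obvious bounds (each value of $i+k_i$ is attained at most $K+O(1)$ times, and $\{i\pm k_i\}$ each have at most $K$ values) only give $K=\Omega(N^{1/2})$, and improving to $N^{2/3}$ requires a more delicate double counting over the near-diagonal pairs $(i+k_i,\,i-k_i)$, exploiting both that these sequences cannot be ``clustered'' — because $k_i\le\min(i,m-i)$ keeps $i\pm k_i$ within $O(K)$ of $i$, forcing the relevant sets to be spread out across $[1,m]$ — and that each position with large $k_i$ is ``paid for'' by the bushiness of $B_i$.

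For part (ii) I would construct, recursively, a $1$--$3$ tree in which \emph{every} leaf-to-leaf distance lies in a fixed sparse, additively structured set $S$ with $\lvert S\cap[0,N]\rvert=O(N^{2/3})$; then no leaf can witness more than $O(N^{2/3})$ short lengths. Concretely I would fix a constant-size $1$--$3$ ``gadget'' $G$ with a distinguished leaf, set $T_1=G$, and let $T_L$ be $G$ with a fresh copy of $T_{L-1}$ substituted (at its distinguished leaf) at some of the leaves of $G$. One chooses $G$ so that (a) all distances internal to $G$ lie in $S$, and (b) distances between leaves in different copies are controlled sums of two elements of the distance set of $T_{L-1}$ and one element of a small set $D$ of gadget-distances, with these sums staying in $S$ — so that an easy induction keeps every distance of $T_L$ in $S$. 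Unwinding the recursion, the diameter and the number of distinct distances below the current diameter grow by constant factors per level whose logarithms are in ratio $2/3$ (for instance diameter $\times 8$, count $\times 4$), which gives the exponent, while taking enough levels makes $n$ arbitrarily large. Here the main obstacle is the gadget design: a $1$--$3$ tree cannot contain long paths of degree-$2$ vertices, so one cannot simply force all distances into a single residue class; instead the digit-restriction/arithmetic-progression structure underlying $S$ must be realised recursively, and all cross-copy distances at every recursion depth must be verified to remain in $S$.
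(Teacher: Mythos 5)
Both halves of your sketch leave the crucial step unsupplied, and in part (ii) the target you aim for is likely too strong.

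For part (i), your setup (longest path $P$, pendant subtrees $B_i$ off each $u_i$, minimum leaf-depth $k_i$, the constraint $k_i \le \min(i,m-i)$, and the observation that the sets $\{i+k_i\}$ and $\{i-k_i\}$ are small) is in the spirit of the paper's proof, but you stop exactly at the point where the argument has to happen. You say the obvious bounds give only $\Omega(N^{1/2})$ and that a ``more delicate double counting'' should yield $N^{2/3}$, but you give no actual mechanism. The paper closes this gap with two ingredients you do not have. First, a clean case split: either some $T_i$ has \emph{all} its leaves at depth $> N^{2/3}/2$ (in which case $T_i$ is a large binary tree, so pigeonhole gives exponentially many leaves at one fixed depth and \Cref{lem:same_depth} alone yields $\Omega(N^{2/3})$ lengths, with no additive combinatorics needed); or else every $T_i$ has a leaf $x_i$ at depth $a_i \le N^{2/3}/2$, which gives the \emph{uniform} bound $a_i\le m:=N^{2/3}/2$ that your $k_i\le\min(i,m-i)$ does not. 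Second, the additive lemma (\Cref{additive}): partition $[N/2]$ into blocks of length $2m$, apply Erd\H{o}s--Szekeres inside each block to get a monotone run of length $\ge\sqrt m$, and observe that because $0\le a_i\le m$, the resulting value-sets $\{a_i+i\}$ (or $\{a_i-i\}$) from different blocks are pairwise disjoint. This gives $\max(|\{a_i+i\}|,|\{a_i-i\}|)\ge \frac{n}{4\sqrt m}=\Omega(N^{2/3})$, and one then translates by $a_1+1$ or $a_{N/2}+N/2$ to get all lengths witnessed by a single leaf. Your ``double counting over near-diagonal pairs $(i+k_i,i-k_i)$'' is not a substitute for this; without the block-disjointness idea you cannot beat $\sqrt N$.

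For part (ii), you propose to build a 1--3 tree in which \emph{every} leaf-to-leaf distance lies in a fixed set $S$ with $|S\cap[0,N]|=O(N^{2/3})$. This would establish \Cref{thm:upper-bound-lengths} with exponent $c=2/3$, which is strictly stronger than what the paper achieves ($c\approx 0.9073$) and, as noted in \Cref{sec:conclud}, beyond what the paper's additive method can reach (the Ruzsa triangle inequality caps that method at $c=3/4$). The theorem you are asked to prove is deliberately weaker: only the lengths witnessed by a \emph{single} leaf need to be sparse, and different leaves may witness wildly different sets. The paper exploits this asymmetry directly: it attaches perfect binary trees of depth $a_i=\lceil i/m\rceil m-(i-1)_m$ (with $m=\lfloor N^{1/3}\rfloor$) to a path, and for a fixed source leaf in $T_{j_0}$, the distances to leaves in $T_j$ with $j>j_0$ are controlled because $a_j+j\equiv 1\pmod m$, while distances to leaves in $T_j$ with $j<j_0$ are controlled because $a_j-j$ takes only $O(m)$ values. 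The two constraints are asymmetric (one modular, one by boundedness), which is exactly what allows a per-leaf bound without bounding the global distance set. Your recursive gadget scheme has no such mechanism, the gadget itself is left undesigned, and the ``diameter $\times 8$, count $\times 4$'' calculation is only a numerology for the desired exponent, not a construction. As stated, this part is a restatement of the goal, not a proof.
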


Note that the assumption that there are no vertices of degree 2 in the lower bound of \Cref{lots} is necessary, as shown again by the example of a subdivided star. Since every $n$-vertex 1-3 tree has diameter at least $\log n - 2$ (for instance, by \Cref{anylengths}), we see that indeed the first part of \Cref{lots} implies \Cref{smalllengths}.

The proof of the first part of \Cref{lots} proceeds as follows: if $T$ contains many disjoint (rooted) subtrees in which some leaf is very close to the root, then we use the Erd\H{o}s-Szekeres theorem to find a subfamily of such subtrees for which we can control the lengths of paths between leaves in distinct subtrees. If instead $T$ contains a subtree $T'$ in which every leaf is far from the root, then we find many distinct leaf-to-leaf path lengths inside of $T'$.

The proofs of \Cref{thm:upper-bound-lengths} and the second part of \Cref{lots} rely on a connection to an additive combinatorics question which may be interesting in its own right. More specifically, we construct a tree $T$ by appending balanced binary trees of varying depths to a long path; it then turns out that the set of leaf-to-leaf path lengths in $T$ can be controlled by the additive structure of the sequence of subtree depths. For \Cref{thm:upper-bound-lengths}, this allows us to relate the problem to the construction of a pair of finite sets $U, V \subseteq \mathbb{N}$ such that, for some large $m \geq 1$, $U - V = [m]$ and $|U+V| = O(m^\beta)$ for some suitable $\beta\in (0,1)$. We discuss this in more detail in the concluding remarks (\Cref{sec:conclud}).

\subsection{Notation}

We use standard asymptotic notation and graph theory notation and terminology -- see \cite{bollobas}. 

In particular, given a (simple, undirected) graph $G$ we write $N_G(v)$ for the neighbourhood of a vertex $v$ in $G$, $\deg_G(v)$ for the degree of $v$ and $d_G(u, v)$ for the distance between $u$ and $v$ in a graph, i.e.\ the number of edges of the shortest path connecting them. We will drop the subscript $G$ from the above notations if the graph $G$ is clear from context.
We also write $\Delta(G)$ for the maximum degree of $G$. 
For $U\subseteq V(G)$, let $G[U]$ be the induced subgraph of $G$ with the vertex set $U$. For a path $P$ and a cycle $C$ in $G$, we denote the length of $P$ (resp. $C$) by $\ell(P)$ (resp. $\ell(C)$), meaning the number of edges in $P$ (resp. $C$).

For positive integers $s, t$, we write $(t)_s$ for the residue of $t \mod s$ (as an integer in $\{0, 1, \dots, s-1\}$) and also use the nonstandard notation $(t)_s^*$ for the same residue considered as an integer in $\{1, 2, \dots, s\}$. When $s\leq t$, define $[s,t]=\{i\in \mathbb{Z}:s\leq i\leq t\}$ and let $[t]=[1,t]$. 

Given a rooted tree $(T,r)$, its \emph{layers} are the sets $\{v \in V(T): d(v,r)=i\}$ for $i \geq 0$. Given $\ell \geq 1$, we call $(T, r)$ a \emph{perfect binary tree on $\ell$ layers} if $T$ is a binary tree rooted at $r$ and every leaf $v\in T$ satisfies $d(r, v)=\ell-1$. We denote the set of leaves of \(T\) by \(L(T)\).
For $u,v\in V(T)$, we write $T[u,v]$ to denote the unique $(u,v)$ path in $T$.

We also employ a common abuse of notation by omitting floor and ceiling symbols and ignoring the rounding errors this causes whenever it is not essential for our argument; we emphasize this will only occur in the proofs of our asymptotic results and not in the case of \Cref{anylengths}.

\subsection{Organization}
The remainder of the paper is organized as follows. We prove that we can find many leaf-to-leaf path lengths in trees -- \Cref{anylengths} and the first part of \Cref{lots} -- in \Cref{manypaths}. We provide constructions of trees with a small number of distinct leaf-to-leaf path lengths -- \Cref{thm:upper-bound-lengths} and the second part of \Cref{lots} -- in \Cref{constructions}. We prove \Cref{thm:d-3-cCL} -- that we can find many distinct cycle lengths in degree 3-critical graphs -- in \Cref{sect:cycles}. We discuss several open problems in \Cref{sec:conclud}.

\section{Finding many leaf-to-leaf path lengths}\label{manypaths}
\subsection{Paths of unrestricted length}\label{pfthm1}

In this section, we prove \Cref{anylengths}. We begin with a lemma showing how to find many lengths in a rooted tree with many leaves at the same distance from the root.

 \begin{lemma}\label{lem:same_depth}
    Let $\Delta\geq 3$ and let $T$ be a rooted tree with root $r$ and $\Delta(T) \leq \Delta$. Assume that for some $a\geq 1$ there are $m$ distinct leaves $x_1, \dots, x_m$ such that $d(r, x_i) =a$ for all $1\leq i\leq m$. 
    Then there exists an \(i \in [m]\)
    such that $T$ contains leaf-to-leaf paths of at least $\log_{\Delta - 1} (m/\Delta) + 2$ distinct lengths between $0$ and $2a$, all witnessed by $x_i$. 
\end{lemma}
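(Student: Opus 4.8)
The plan is to analyze the structure of the paths from $r$ to the leaves $x_1, \dots, x_m$ and extract a single leaf $x_i$ that witnesses many distinct path lengths. Consider the union of the $r$–$x_j$ paths; since all of these have length exactly $a$, and the tree has maximum degree $\le \Delta$, this union is a subtree in which every leaf $x_j$ sits at depth $a$. For each vertex $v$ on the path $T[r, x_i]$ (at depth $d(r,v) = t$, say), look at the subtree hanging off $v$ away from $r$: if this subtree contains another leaf $x_j$ ($j \ne i$) at depth $a$, then the path $T[x_i, x_j]$ has length $2(a - t) = 2a - 2t$, because it goes up from $x_i$ to $v$ and back down to $x_j$, each branch having length $a - t$. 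So the set of lengths witnessed by $x_i$ contains $0$ (trivially) together with all values $2a - 2t$ for which the subtree at the depth-$t$ vertex of $T[r,x_i]$ branches towards a second depth-$a$ leaf.

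So the goal reduces to: choose $i$ so that the path $T[r, x_i]$ has branching towards other $x_j$'s at many distinct depths $t$. I would do this greedily/by a counting argument. Root the subtree at $r$ and trace down: at each vertex the $m$ leaves split among at most $\Delta - 1$ children (at most $\Delta$ at the root), so by pigeonhole one child subtree retains at least a $1/(\Delta-1)$ fraction of the leaves still under consideration (a $1/\Delta$ fraction at the first step). A depth $t$ contributes a new witnessed length $2a-2t$ to $x_i$ precisely when, at the depth-$t$ ancestor of $x_i$, the leaf set splits nontrivially — i.e. not all remaining leaves go into a single child. If I always follow the heaviest child, I may miss branch points; instead I should count: starting with $m$ leaves, each step down either has a genuine branch (contributing a length) or not. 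Since after passing through $k$ genuine branch points following heaviest children we still have at least $m / \big(\Delta (\Delta-1)^{k-1}\big)$ leaves, and we can only continue as long as this is $\ge 2$ (we need at least two distinct leaves to have a path between them, though actually $\ge 1$ suffices to keep going, with the last branch pinned down by a second leaf), we get that the number of genuine branch points along the chosen root-leaf path is at least roughly $\log_{\Delta-1}(m/\Delta) + 1$. Together with the length $0$ this gives $\log_{\Delta-1}(m/\Delta) + 2$ distinct lengths, all of the form $2a - 2t$ (hence in $[0, 2a]$) or $0$, all witnessed by the final leaf $x_i$.

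More carefully: I would define a sequence of vertices $r = v_0, v_1, v_2, \dots$ where $v_{j+1}$ is the child of $v_j$ whose subtree contains the most of the leaves among $x_1, \dots, x_m$ that lie below $v_j$, and let $m_j$ be that count, so $m_0 = m$ and $m_{j+1} \ge m_j / (\Delta - 1)$ for $j \ge 1$ and $m_1 \ge m_0/\Delta$. Say step $j$ (going from $v_{j-1}$ to $v_j$) is a \emph{branching step} if some leaf among the $m_{j-1}$ lies below $v_{j-1}$ but not below $v_j$. Each branching step at depth $d(r, v_{j-1}) = j-1$ certifies the length $2a - 2(j-1)$ for the eventual leaf $x_i$ we land on, since the "lost" leaf $x_j$ and $x_i$ meet at $v_{j-1}$. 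If step $j$ is \emph{not} a branching step then $m_j = m_{j-1}$ exactly. Hence, if there are $B$ branching steps before we reach depth $a$, then $m \le \Delta (\Delta-1)^{B-1} \cdot m_{\text{final}}$; since $m_{\text{final}} \ge 1$ as long as we have not exhausted the leaves, and since we reach depth $a$ where a leaf sits, we get $B \ge \log_{\Delta-1}(m/\Delta) + 1$. Adding the trivial length $0$ yields at least $\log_{\Delta-1}(m/\Delta) + 2$ distinct lengths witnessed by $x_i := v_a$.

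The main obstacle I anticipate is bookkeeping the off-by-one / boundary issues precisely enough to land the constant "$+2$" rather than "$+1$": in particular handling the root step (factor $\Delta$ vs $\Delta - 1$) correctly, making sure the last branching step still has a genuine second leaf to pair with $x_i$, and confirming that all the produced lengths $2a - 2(j-1)$ are genuinely distinct and lie in $[0, 2a]$ (which is immediate once one checks $j - 1$ ranges over distinct values in $[0, a]$). None of this is deep, but the statement's bound is tight enough that the constants must be tracked with care.
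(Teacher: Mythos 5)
Your proposal is correct and takes essentially the same approach as the paper's proof: both descend from the root along the heavy child (taking the subtree containing the most depth-$a$ leaves), count the branching steps with the factor $\Delta$ available only at the root and $\Delta-1$ at every later vertex, and pair the final leaf with a ``lost'' leaf at each branching level to certify a new even length. The paper packages this as an induction on $|V(T)|$ with a two-case analysis on the root degree (proving a slightly stronger bound when $\deg(r)\le\Delta-1$), whereas you write an explicit iterative descent; the bookkeeping you outlined is sound and yields the same constant $\log_{\Delta-1}(m/\Delta)+2$.
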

\begin{proof}
Denote the root's neighbours by $r_1, \dots, r_k$ with $k\leq\Delta$. Deleting the root $r$ from $T$ gives $k$ new rooted trees $T_1, \dots T_k$, with the new roots being the $r_i$'s.

\medskip

\ninn \textbf{Case 1:} $\deg(r)\leq\Delta-1$. In this case, we will prove the slightly stronger result that we can find at least $\log_{\Delta-1}m+1$ suitable lengths, all witnessed by the same $x_i$. We proceed by induction on the number of vertices of $T$.

As a base case, note that if $T$ has only one vertex $x_1$, then there is precisely $\log_{\Delta-1}(1)+1=1$ leaf-to-leaf path, namely that of length 0 (witnessed by $x_1$).

For the inductive step, we distinguish two further subcases. If one of the $T_i$'s contains all leaves $x_1, \dots, x_m$, then the claim follows by the induction hypothesis applied to $T_i$, since the root of $T_i$ has degree at most $\Delta-1$. Otherwise, by relabelling if necessary, we may assume that $T_1$ contains at least $m/(\Delta-1)$ of the leaves $x_1, \dots, x_m$, and that $T_2$ contains at least one leaf $x_j$.

Moreover, the root of $T_1$ has degree at most $\Delta-1$. By the inductive hypothesis, $T_1$ contains at least $\log_{\Delta-1}(m/(\Delta-1))+1 = \log_{\Delta-1}(m)$ distinct lengths of leaf-to-leaf paths between $0$ and $2(a-1)$, all witnessed by a some leaf $x_i$. Observe that the unique path from $x_i$ to $x_j$ has length $2a$. This gives $\log_{\Delta-1}(m) + 1$ lengths of paths between $0$ and $2a$, all witnessed by $x_i$. 

\medskip

\ninn \textbf{Case 2:} $\deg(r)=\Delta$. We again induct on the number of vertices of $T$. If $T$ has $\Delta+1$ vertices, then $m = \Delta$ and each leaf witnesses lengths 0 and 1, so the conclusion holds.

For the inductive step, again consider the two subcases outlined above. If one of the $T_i$'s contains all $m$ leaves $x_1, \dots, x_m$, then the claim follows by the inductive hypothesis applied to $T_i$. Otherwise, again like in Case 1 we may assume that $T_1$ has at least $m/\Delta$ leaves from the set $\{x_1, \dots, x_m\}$ and $T_2$ has at least one leaf $x_j$. Now the root of $T_1$ has degree at most $\Delta-1$, so we may use the slightly stronger bound obtained in Case 1 to find at least $\log_{\Delta-1}(m/\Delta)+1$ distinct lengths between $0$ and $2(a-1)$, all witnessed by some $x_i$. Together with the path of length $2a$ connecting $x_i$ to $x_j$, we obtain at least $\log_{\Delta-1}(m/\Delta)+2$ lengths of paths between $0$ and $2a$, all witnessed by $x_i$. 
\end{proof}

Our proof of \Cref{anylengths} proceeds by induction on the number of leaves in the tree $T$. After choosing a root appropriately, we either find many leaves at the same distance from it (and thus \Cref{lem:same_depth} applies), or instead find a subtree $T'$ with strictly smaller diameter but still having many leaves of $T$ (to which the inductive hypothesis applies). For the choice of root, we need the following well-known Helly-type lemma for trees (see, for instance,  \cite{GLhelly} or \cite{helly}).

\begin{lemma}\label{helly-tree}
    Let \(T\) be a tree and \(T_1,\hdots, T_s\) be a collection of subtrees of \(T\) such that $V(T_i)\cap V(T_j) \neq \emptyset$ for all $1\leq i<j\leq s$. Then $\cap_{i=1}^s V(T_i) \neq\emptyset$.
\end{lemma}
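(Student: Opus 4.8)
The plan is to prove this classical Helly property via the ``highest vertex'' trick. First I would fix an arbitrary root $\rho \in V(T)$, turning $T$ into a rooted tree so that every vertex has a well-defined depth and a well-defined ancestor relation. For each subtree $T_i$, since $T_i$ is connected, it has a \emph{unique} vertex $v_i$ of minimum depth: if $u, w \in V(T_i)$ both had minimum depth, the unique $u$--$w$ path in $T$ lies inside $T_i$, and its topmost vertex (the least common ancestor of $u$ and $w$) has strictly smaller depth unless $u = w$. Call $v_i$ the \emph{apex} of $T_i$. The same argument shows that for any $x \in V(T_i)$, the $v_i$--$x$ path in $T$ descends monotonically, so $v_i$ is an ancestor of $x$.

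Next, among the apices $v_1, \dots, v_s$, choose one of maximum depth; after relabelling, say it is $v_1$. I claim $v_1 \in \bigcap_{i=1}^s V(T_i)$, which finishes the proof. Trivially $v_1 \in V(T_1)$. Fix any $j \geq 2$. By hypothesis there is a vertex $x \in V(T_1) \cap V(T_j)$. By the observation above, $v_1$ is an ancestor of $x$ (since $x \in V(T_1)$) and $v_j$ is an ancestor of $x$ (since $x \in V(T_j)$); hence $v_1$ and $v_j$ both lie on the root-to-$x$ path and are therefore comparable in the ancestor order. Since $\mathrm{depth}(v_j) \leq \mathrm{depth}(v_1)$, the vertex $v_j$ is an ancestor of $v_1$, so along the root-to-$x$ path the vertices occur in the order $v_j, \dots, v_1, \dots, x$. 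In particular $v_1$ lies on the $v_j$--$x$ path, which is contained in $V(T_j)$ because $T_j$ is connected and contains both $v_j$ and $x$. Thus $v_1 \in V(T_j)$, as desired.

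The only subtle point — the one I would spell out most carefully — is the claim that within a connected subtree of a rooted tree, the path from the apex to any of its vertices descends monotonically; everything else is routine bookkeeping with the ancestor relation and the fact that a path in a tree is unique. An alternative route would be induction on $|V(T)|$ by deleting a leaf of $T$ and tracking which $T_i$ still contain that leaf, but the rooting argument above is shorter and avoids the attendant case analysis, so that is the version I would present.
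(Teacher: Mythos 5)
Your proof is correct. The paper does not actually prove this lemma — it is stated as a well-known Helly-type result for trees and cited to the literature — so there is no in-paper argument to compare against. Your rooted ``apex of maximum depth'' argument is one of the standard proofs of this fact, and all the steps check out: uniqueness of the minimum-depth vertex $v_i$ via the LCA observation, $v_i$ being an ancestor of every vertex of $T_i$, comparability of $v_1$ and $v_j$ because both lie on the root-to-$x$ path for $x \in V(T_1)\cap V(T_j)$, and finally $v_1$ lying on the $v_j$--$x$ path inside $T_j$. (The alternative you mention, induction on $|V(T)|$ by deleting a leaf, is the other common route and also works; your version is indeed cleaner.)
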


We are now ready to prove the main result of this section. 

\begin{proof}[{Proof of \Cref{anylengths}}]
The proof is by induction on $|L(T)|$. Note that the statement is trivial when $|L(T)|=1$, since there is one path length (namely zero), and when $|L(T)|\in [2,\Delta]$, since there are at least two path lengths in $T$ and $\log_{\Delta-1}(\Delta(\Delta-2)) \leq 2$.
Assume that the statement is true for all $\ell'<\ell$ and consider a tree $T$ with $\ell$ leaves. 
It is not hard to see that any two longest paths in \(T\)  share a vertex and thus~\Cref{helly-tree} implies there is a vertex $v$ which is contained in every longest path. Moreover, we may assume without loss of generality that $v$ is not a leaf, since otherwise its neighbour also satisfies this condition. Let $m$ be the length of the longest path in $T$. We consider two cases.

\medskip

\ninn \textbf{Case 1:} There is some leaf $x$ with $d(x,v)>m/2$.

Firstly, take a leaf $x$ that maximizes $d(x,v)$. Let $e=vu$ be the edge incident to $v$ on $T[v,x]$. Note that every leaf $y$ that is connected to $v$ by a path not containing $e$ satisfies $d(y,v) \leq m - d(x,v) < m/2$, as otherwise $T[x,y]=T[x,v]\cup T[v,y]$ would be a path of length greater than $m$. Moreover, since every longest path in $T$ passes through $v$, there must exist some leaf $y$ satisfying $e\notin T[y,v]$ and $d(y, v)=m-d(v, x)$.
It follows that every longest path in $T$ is formed by concatenating a path of length $d(x,v)$ from a leaf to $v$ (passing through $e$) together with a path of length $m - d(x,v)$ from $v$ to another leaf (avoiding $e$). 

Now, let $X_1$ be the set of leaves whose distance from $v$ is equal to $d(x,v)$ and let $X_2$ be the set of leaves whose distance from $v$ is equal to $m - d(x,v)$. $X_1$ and $X_2$ are clearly disjoint, and by the above, every longest path in $T$ goes from a vertex in $X_1$ to a vertex in $X_2$.

By relabelling if necessary, we may assume that $|X_1| \leq |X_2|$. Let $L=L(T)$ be the set of leaves in $T$, and observe that $|L \setminus X_1| \geq \ell/2$. We define $T'$ to be the smallest subtree of $T$ such that $L \setminus X_1 \subseteq V(T')$, and claim that $L(T')=L \setminus X_1$. Indeed, if $T'$ contained some other leaf $u \notin L \setminus X_1$, then $T' - u$ would still be connected and we would have $L \setminus X_1 \subseteq V(T' - u)$, a contradiction. Thus, $L(T')=L \setminus X_1\subseteq L(T)$, which implies that leaf-to-leaf paths in $T'$ are also leaf-to-leaf paths in $T$. Crucially, $V(T') \cap X_1 = \emptyset$ and thus the longest path in $T'$ is of length strictly less than $m$.

By the induction hypothesis, $T'$ contains leaf-to-leaf paths of at least 
$$\log_{\Delta-1}(\ell/2)+\log_{\Delta-1}(\Delta-2)\geq\log_{\Delta-1}\ell+\log_{\Delta-1}(\Delta-2)-1$$ distinct lengths, all 
strictly smaller than $m$. Together with the length $m$, we conclude that $T$ contains at least $\log_{\Delta-1}\ell+\log_{\Delta-1}(\Delta-2)$ distinct leaf-to-leaf path lengths.

\medskip

\ninn \textbf{Case 2:} The furthest leaf $x$ from $v$ satisfies $d(x,v) = m/2$.

In this case, every longest path is obtained by concatenating two internally vertex-disjoint paths of length $m/2$ from $v$ to different leaves. Let $X$ be the set of leaves of $T$ which are at distance precisely $m/2$ from $v$.  Now we split into two 
further subcases.

\medskip

\ninn \textbf{Case 2.1:} $|X|<(1-(\Delta-1)^{-2})\ell$. Consider the collection of subtrees of $T$ obtained by deleting the vertex $v$, and let $\overline T$ be one which contains at least $|X|/\Delta$ elements of $X$. 

Define $X' = X \setminus V(\overline{T})$, so that $|X'| \leq (1 - \Delta^{-1})|X|$. Recalling that $L$ is the set of leaves of $T$ and $|L|=\ell$, we define $T'$ to be the smallest subtree of $T$ such that $L \setminus X' \subseteq V(T')$. Using the same argument as in Case 1, it is easy to see that $L(T')=L \setminus X'$. Hence we have
\[
|L\setminus L(T')|=|X'|\leq 
\frac{\Delta-1}{\Delta}|X|
\leq \left( 1-\frac{1}{\Delta} - \frac{1}{\Delta(\Delta-1)} \right) \ell
=
(1-1/(\Delta-1))\ell.
\]
Thus, $T'$ has maximum degree at most $\Delta$, at least $\ell/(\Delta-1)$ leaves and by construction the longest path in $T'$ is strictly shorter than $m$ in length. Indeed, given a longest path, it has leaves $u_1$ and $u_2$, say, as endpoints. Supposing this path has length $m$, by the assumption of Case 2 above we know that $d(v, u_1)=d(v, u_2)=m/2$, and so $u_1, u_2\in X\setminus X'$. But then both $u_1$ and $u_2$ belong to the subtree $\overline T$, and hence the path connecting them doesn't pass through $v$, contradiction.

Thus, by the induction hypothesis, the leaf-to-leaf paths in $T'$ have at least $\log_{\Delta-1}((\Delta-2)\ell)-1$ many distinct lengths, and all of these also occur in $T$. Together with a leaf-to-leaf path of length $m$ in $T$, we get the required bound.

\medskip

\ninn \textbf{Case 2.2:} $|X|\geq (1-(\Delta-1)^{-2})\ell$. Then, it follows by applying \Cref{lem:same_depth} to $T$ rooted at $v$ that there are at least
\[\log_{\Delta-1}\left(\frac{(1-(\Delta-1)^{-2})\ell}{\Delta}\right)+2=\log_{\Delta-1}((\Delta-2)\ell)\]
distinct leaf-to-leaf path lengths, as required.
\end{proof}

\subsection{Paths of restricted length}\label{pfthm2} 

The aim of this section is to prove the lower bound of \Cref{lots}, which guarantees many lengths of short leaf-to-leaf paths in trees with not-too-small diameter and no vertices of degree 2. 

We will consider a path of maximum length in $T$ and look at its initial segment $P$ of length $N/2$. Each vertex $v$ in $P$ has a subtree hanging from it (which we root at $v$). We will split into two cases depending on the minimum root-to-leaf distance in each of these subtrees. If one of them is very deep, we will be able to find many short leaf-to-leaf paths inside of it; this is inspired by the approach of~\cite{NPSz}. Otherwise, all of the subtrees have shallow leaves and we will travel along $P$ to find many paths of distinct lengths connecting them. 

We will require the following classical result.

\begin{theorem}[Erd\H{o}s-Szekeres \cite{erdos1935combinatorial}]\label{erdos-szekeres-variant}
    Any sequence of \(n\) not necessarily distinct real numbers contains a monotone subsequence of length at least \(\sqrt{n}\).
\end{theorem}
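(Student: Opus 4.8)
The final statement in the excerpt is Theorem~\ref{erdos-szekeres-variant}, the Erdős–Szekeres theorem: any sequence of $n$ real numbers contains a monotone subsequence of length at least $\sqrt n$. Let me plan a proof of this classical fact.

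The plan is to use the standard pigeonhole argument via labeling each term of the sequence with the length of the longest increasing subsequence ending there. Write the sequence as $a_1, a_2, \dots, a_n$. For each index $i$, let $f(i)$ be the length of the longest (strictly or weakly — I'll use weakly) increasing subsequence that ends at position $i$, and let $g(i)$ be the length of the longest weakly decreasing subsequence ending at position $i$. The first thing I would check is the key monotonicity observation: if $i < j$ and $a_i \le a_j$, then $f(i) < f(j)$, because any increasing subsequence ending at $i$ can be extended by $a_j$; symmetrically, if $i < j$ and $a_i \ge a_j$ then $g(i) < g(j)$. Since for any $i < j$ we have either $a_i \le a_j$ or $a_i \ge a_j$, it follows that the map $i \mapsto (f(i), g(i))$ is injective: for $i \ne j$ the pairs differ in at least one coordinate.

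Next I would conclude by a counting/pigeonhole step. Suppose, for contradiction, that there is no monotone (weakly increasing or weakly decreasing) subsequence of length at least $\sqrt n$; equivalently the longest such has length strictly less than $\sqrt n$, i.e. at most $\lceil \sqrt n \rceil - 1$. Then each of $f(i)$ and $g(i)$ takes values in a set of size at most $\lceil \sqrt n\rceil - 1 < \sqrt n$, so the pair $(f(i), g(i))$ lies in a set of size strictly less than $(\sqrt n)^2 = n$. But the map $i \mapsto (f(i), g(i))$ on the $n$ indices is injective, which is impossible. Hence some monotone subsequence of length at least $\sqrt n$ exists. (One should be slightly careful that the version needed in the paper allows repeated values and asks only for a "monotone" subsequence without specifying strict/weak; working with weakly monotone subsequences throughout handles this cleanly, and a weakly monotone subsequence of the required length is exactly what is claimed.)

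The main obstacle here is essentially bookkeeping rather than conceptual: one must pin down whether "monotone" is meant in the weak or strict sense and ensure the pigeonhole bound $(\lceil\sqrt n\rceil-1)^2 < n$ is used correctly (it holds since $\lceil\sqrt n\rceil - 1 < \sqrt n$). No deep machinery is required — this is the textbook proof, and the only real care needed is the injectivity claim and consistent use of weak monotonicity so that equal values do not break the argument.
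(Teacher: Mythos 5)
The paper does not include a proof of this theorem: it is stated as a citation of the classical Erd\H{o}s--Szekeres result \cite{erdos1935combinatorial} and used as a black box in Lemma~\ref{additive}. Your proof is the standard Seidenberg-style pigeonhole argument and is correct: defining $f(i)$ and $g(i)$ as the lengths of the longest weakly increasing and weakly decreasing subsequences ending at position $i$, the map $i \mapsto (f(i),g(i))$ is injective (for $i<j$ one of the two coordinates strictly increases, depending on whether $a_i\le a_j$ or $a_i\ge a_j$), so if both coordinates were always at most $\lceil\sqrt n\rceil-1 < \sqrt n$ the range would have fewer than $n$ elements, a contradiction. You are right to work with weak monotonicity throughout, since the sequence may have repeated values and the theorem asks only for a monotone subsequence, and the bound $(\lceil\sqrt n\rceil-1)^2 < n$ is used correctly to conclude that some $f(i)$ or $g(i)$ is at least $\lceil\sqrt n\rceil \ge \sqrt n$.
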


We use \Cref{erdos-szekeres-variant} to prove the following lemma, which will be useful for proving \Cref{lots}~\emph{(i)}.

\begin{lemma}\label{additive} Let $(a_1, \dots, a_n)$ be a sequence of non-negative real numbers such that $a_i \leq m$ for each $1 \leq i \leq n$ and some $m>0$. Then
\[ \max\bigg\{\big|\{a_i+i: 1\leq i\leq n\} \big|, \big|\{a_i-i: 1\leq i\leq n\}\big| \bigg\}\geq \frac{n}{4\sqrt{m}}.\]
\end{lemma}
\begin{proof}

First, suppose that $m \leq n/2$. For each $1\leq i\leq n/(2m) $, set $A_i\coloneqq (a_j)_{j=2(i-1)m+1}^{(2i-1)m}$. \Cref{erdos-szekeres-variant} implies that each sequence $A_i$ contains a monotone subsequence of length at least $\sqrt{m}$. Let $B_i$ be the set of indices of this subsequence, so that $|B_i|\geq \sqrt{m}$ and $B_i \subseteq [2(i-1)m+1,(2i-1)m]$.

Let $X$ be the set of indices $1\leq k\leq \frac{n}{2m}$ for
which  \((a_i)_{i\in B_k}\)
is an increasing sequence, and set $Y\coloneqq \left[ \frac{n}{2m} \right] \setminus X$.
Suppose $|X| \geq \frac{n}{4m}$. 
    For each $k\in X$ and $i, j\in B_k$ with $i < j$, we have $a_{i} + i< a_{j} +j$, so the set
    \(
    A_k' = \{ a_i + i: i \in B_k \}
    \)
    consists of \(|B_k| \ge \sqrt{m}\) distinct elements.
    Moreover, given integers $1\leq k_1<k_2\leq \frac{n}{2m}$, for any $i_1\in B_{k_1}$ and $ i_2\in B_{k_2}$ we have
    \[
        a_{i_1}  + i_1
        \le m + (2k_1-1)m = 2k_1 m,
    \]
    and
    \[
    a_{i_2} + i_2 \ge 0 + 2(k_2-1)m +1
    \ge 2k_1 m +1,
    \]
    so the sets \(A_k'\) are pairwise disjoint.
    We conclude that 
    \[
    |\{ a_i+i : 1\leq i\leq n\}|
    \geq \sum_{k\in X} |A_k'|
    \geq \frac{n}{4m} \cdot \sqrt{m}=
    \frac{n}{4\sqrt{m}}.
    \]
    If instead we have \(|X|<\frac{n}{4m}\), then \(|Y| \ge \frac{n}{4m}\), and for every \(k\in Y\), \((a_i)_{i\in B_k}\) is a decreasing subsequence.
    An analogous argument shows that in this case \(|\{a_i - i: 1\leq i\leq n\}|\ge \frac{n}{4\sqrt{m}}\).

    If $m>n/2$, \Cref{erdos-szekeres-variant} guarantees that the sequence $(a_i)_{i=1}^n$ has a monotone subsequence of length at least $\sqrt{n}$. If this sequence is increasing, then $|\{a_i+i:1\leq i\leq n\}|\geq \sqrt{n}$, while if the sequence is decreasing, then 
    $|\{a_i-i: 1\leq i\leq n\}|\geq \sqrt{n} $,
    and note that both quantities are at least \( \frac{n}{4\sqrt m}\), as required.
\end{proof}

\begin{proof}[{Proof of \Cref{lots}(i)}] 
We can assume that $N$ is an even integer.
Let $P = v_0 v_1 \dots v_M$ be a path of maximum length in $T$ and let $P'= v_0 v_1 \dots v_{N/2}$ be its initial segment of length $N/2$. For each $1 \leq i \leq N/2$, let $T_i$ be the connected component of $T \setminus E(P)$ that contains $v_i$. 

Observe that for every $1\leq i\leq N/2$ and every leaf $x\in T_i\setminus\{v_i\}$, we must have $d(x, v_i)\leq N/2$, as otherwise we would have $d(x, v_M)>M$, a contradiction.

\medskip

\ninn \textbf{Case 1:} There exists some $1\leq i\leq N/2$ such that for every leaf $x\in T_i\setminus\{v_i\}$, we have $d(x, v_i)>N^{2/3}/2$.
Then \(v_i\) has a neighbour \(u_i \in V(T_i)\) which is not a leaf and hence has degree at least 3 in \(T_i\).
Let \(T'\) be a maximal binary subtree of $T_i - v_i$ rooted at \(u_i\), and note that every leaf of \(T'\) is also a leaf of \(T\). Every leaf of \(T'\) is at distance at least $N^{2/3}/2 - 1$ from $u_i$. Together with the fact that each non-leaf vertex in $T'$ has two children, this implies that
$T'$ contains at least \(2^{N^{2/3}/2 -1}\) leaves.
As established above, each of these leaves is at distance at most $N/2$ from $v_i$. Thus, there exists some $1\leq d\leq N/2$ 
for which at least $2^{N^{2/3}/2}/N\geq 2^{N^{2/3}/3}$ distinct leaves in $T_i$ 
are all at distance precisely $d$ from $v_i$. By \Cref{lem:same_depth} we can then find a leaf $x\in T_i$ witnessing at least $\log (2^{N^{2/3}/3}/3)+ 2 \geq N^{2/3}/3$ distinct leaf-to-leaf path lengths in $T$, and all of these lengths are at most equal to $2d\leq N$.

\medskip

\ninn \textbf{Case 2:} For every $1\leq i\leq N/2$ there exists a leaf $x_i\in T_i$, $x_i\neq v_i$, such that $a_i \coloneqq d(x_i, v_i) \leq N^{2/3}/2$.

Observe that the set of path lengths connecting pairs in $\{x_1, \dots, x_{N/2}\}$ is precisely
$$X = \{a_i + a_j + j - i : 1 \leq i < j \leq N/2\}.$$
Moreover, any $(x_i, x_j)$-path has length at most $N/2 + N^{2/3} \leq N$. By applying \Cref{additive} with $m = N^{2/3}/2$, we see that 
$$\max\left( \big|\{a_i + i:1\leq i\leq N/2\}\big|, \big|\{a_i - i: 1\leq i\leq N/2\}\big|\right) \geq 
\frac{N^{2/3}}{4\sqrt{2}}.$$
If the inequality holds for $\{a_i + i: 1\leq i\leq N/2\}$, then 
$$|X| \geq |\{a_1 - 1 + (a_i + i) : 2 \leq i \leq N/2\}| \geq N^{2/3}/6,$$
with $N^{2/3}/6$ distinct lengths being witnessed by $x_1$. If it holds for $\{a_i - i: 1\leq i\leq N/2\}$, then 
$$|X| \geq |\{a_{N/2} + (N/2) + (a_i - i): 1 \leq i \leq (N/2)-1\}| \geq N^{2/3}/6,$$
with $x_{N/2}$ witnessing all these lengths, as desired.\end{proof}

\section{Trees with few leaf-to-leaf path lengths}\label{constructions}

In this section we prove \Cref{thm:upper-bound-lengths} and the second part of \Cref{lots}. Each result is obtained by taking a sequence $(a_i)$ with a suitable additive structure and constructing an $n$-vertex tree $T_n((a_i))$ from it. We first describe the general construction, and then provide a suitable choice of $(a_i)$ for each of the two results.

\subsection{The general construction}\label{sect:constr}
Let  \(n \geq 4\) be even. Let \(m\in\mathbb{N}\) and consider a positive integer sequence \((a_i)_{i=1}^m\). We will now describe a general construction of an \(n\)-vertex 1--3 tree \(T_n((a_i))\) based on this sequence. For the most part, our construction consists of a path together with a collection of perfect binary trees attached to the path's internal vertices, with the sequence $(a_i)$ dictating the depths of the perfect trees.

Consider the periodic sequence $(a'_i)_{i \geq 1}$ given by 
$$
a_1, \dots, a_{m}, a_1, \dots, a_{m}, a_1, \dots,
$$
and take its shortest initial segment $(a'_1, \dots, a'_t)$ with the property that $S \coloneqq 2+\sum_{i=1}^{t} 2^{a'_i} \geq n$. Note that $t \geq 1$. Based on our choice of $t$ and the fact that $n$ and $S$ are even, it must be the case that $S - 2^{a'_t} \leq n - 2$.

We will now describe how to construct $T_n((a_i))$. We start with a path $P = v_0 v_1 \dots v_{t+1}$. For each $i \in [t-1]$, we take a perfect binary tree $(T_i, r_i)$ on $a'_i$ layers, and add an edge from $v_i$ to $r_i$. Thus far, every vertex in the tree other than $v_t$ has degree either 1 or 3 and the total number of vertices is 
$$t+2 + \sum_{i=1}^{t-1} (2^{a'_i} - 1) = S - 2^{a_t'} + 1 \leq n -1.$$

Let $L = n - (S - 2^{a'_t} + 1) \geq 1$, which must be odd since $n$ and $S$ are even. Since $S \geq n$, we have that $L \leq 2^{a'_t} - 1$. We take a perfect binary tree $(\tilde{T}, r_t)$ on $\lceil \log (L + 1) \rceil \leq a'_t$ layers. With this choice, we have $L \leq |V(\tilde{T})| < 2L$. We now proceed to iteratively delete pairs of leaves sharing a parent from the lowest layer of $\tilde{T}$, until we obtain a tree $T_t$ which has precisely $L$ vertices (which is possible since both \(L\) and \(|V(\tilde{T})|\) are odd). By removing pairs of leaves which share a parent, and always from the lowest layer, we guarantee that the resulting $T_t$ is still a binary tree, with its leaves spanning at most two layers. Adding an edge from $r_t$ to $v_t$ then completes the construction of $T_n((a_i))$. 
Observe that for any two leaves \(x_i \in T_i\), \(x_j \in T_j\) with \(i \neq j\),
the unique path from \(x_i\) to \(x_j\) consists of the path inside \(T_i\) from \(x_i\) to \(r_i\), the edge $r_iv_i$, the path from \(v_i\) to \(v_j\) in \(P\), the edge $v_jr_j$ and finally the path from \(r_j\) to \(x_j\); cf.~\Cref{fig1}.

\begin{figure}[htb]
\captionsetup{justification=centering}
\centering
\begin{tikzpicture}[scale=1, cross/.style={path picture={ 
    \draw[black]
;
}}]

\fill[] (-7, 0) circle(0.05) node[left] {\small$v_0$};
\fill[] (6.3, 0) circle(0.05) node[right] {\small$v_{t+1}$};
\fill[] (-6.7, 0) circle(0.05) node[above] {\small$v_1$};
\fill[] (6, 0) circle(0.05) node[above] {\small$v_t$};
\fill[] (6, -0.3) circle(0.05) node[right] {\small$r_t$};
\draw[thick] (6, 0) -- (6, -0.3);
\fill[] (0, 0.3) node[] {$P$};

        \draw[] (-7, 0) -- (6.3, 0);

        \fill[] (-6.7, 0) circle(0.05) node[above] {\small$v_1$};
        \fill[] (-7.2, -1) node[below] {\small$a_1$};
        \fill[] (-6.7, -0.3) circle(0.05) node[right] {\small$r_1$};
        \draw[thick] (-6.7, 0) -- (-6.7, -0.3);
        \draw[thick,black,pattern=north east lines,pattern color=gray] (-6.7, -0.3) -- (-6.5, -1.7) -- (-6.9, -1.7) -- (-6.7,-0.3);
        \fill[] (-6.7, -2) node[] {\small$T_1$};

        \fill[] (-5.2, 0) circle(0.05) node[above] {\small$v_2$};
        \fill[] (-5.7, -1) node[below] {\small$a_2$};
        \fill[] (-5.2, -0.3) circle(0.05) node[right] {\small$r_2$};
        \draw[thick] (-5.2, 0) -- (-5.2, -0.3);
        \draw[thick,black,pattern=north east lines,pattern color=gray] (-5.2, -0.3) -- (-5, -2) -- (-5.4, -2) -- (-5.2,-0.3);
        \fill[] (-5.2, -2.3) node[] {\small$T_2$};

        \fill[] (-4.2, -1) node[] {$\dots$};

        \fill[] (-3, 0) circle(0.05) node[above] {\small$v_m$};
        \fill[] (-3.5, -1) node[below] {\small$a_m$};
        \fill[] (-3, -0.3) circle(0.05) node[right] {\small$r_m$};
        \draw[thick] (-3, 0) -- (-3, -0.3);
        \draw[thick,black,pattern=north east lines,pattern color=gray] (-3, -0.3) -- (-2.8, -2.3) -- (-3.2, -2.3) -- (-3,-0.3);
        \fill[] (-3, -2.6) node[] {$T_m$};

        \fill[] (-1.5, 0) circle(0.05) node[above] {\small$v_{m+1}$};
        \fill[] (-2, -1) node[below] {\small $a_{1}$};
        \fill[] (-1.5, -0.3) circle(0.05) node[right] {\small$r_{m+1}$};
        \draw[thick] (-1.5, 0) -- (-1.5, -0.3);
        \draw[thick,black,pattern=north east lines,pattern color=gray] (-1.5, -0.3) -- (-1.3, -1.7) -- (-1.7, -1.7) -- (-1.5,-0.3);
        \fill[] (-1.5, -2) node[] {\small$T_{m+1}$};
        
        \fill[] (0, -1) node[] {$\dots$};

        \fill[] (1.5, 0) circle(0.05) node[above] {\small$v_{2m}$};
        \fill[] (1, -1) node[below] {\small $a_{m}$};
        \fill[] (1.5, -0.3) circle(0.05) node[right] {\small$r_{2m}$};
        \draw[thick] (1.5, 0) -- (1.5, -0.3);
        \draw[thick,black,pattern=north east lines,pattern color=gray] (1.5, -0.3) -- (1.3, -2.3) -- (1.7, -2.3) -- (1.5,-0.3);
        \fill[] (1.5, -2.6) node[] {\small$T_{2m}$};

        \fill[] (2.8, -1) node[] {$\dots$};

        \draw[thick,black,pattern=north east lines,pattern color=gray] (6, -0.3) -- (6.2, -1.4) -- (5.8, -1.4) -- (6,-0.3);
        \fill[] (6, -1.7) node[] {$T_t$};
        
\end{tikzpicture}
\caption{The construction of the tree $T_n((a_i))$. Each subtree $T_{i+mj}$ for $j\geq 1$, except $T_t$, represents a perfect binary tree on $a_i$ layers, whose root neighbours the corresponding vertex on the horizontal path $P$. Note that this pattern repeats cyclically every $m$ steps. To the vertex $v_t$, we instead append the specific tree $T_t$, as described in the context.} \label{fig1}
\end{figure}
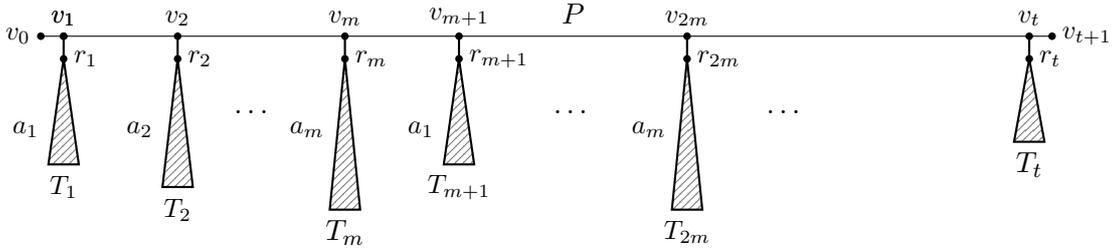

\subsection{Upper bound on path lengths in \([N]\)}
\newcommand{\abs}[1]{\left|#1\right|}
\newcommand{\ceil}[1]{ \lceil #1 \rceil }

For a set \(U\subseteq \mathbb{Z}\) and \(k\in \mathbb{Z}\), let \(k\cdot U \coloneqq \{ ku: u \in U\}\) and \(k + U = U + k \coloneqq \{u+k : u \in U\}\).

\begin{proposition}\label{prop:additive-to-tree}
    Suppose that there exist a positive integer \(m\), sets \(U,V \subseteq \mathbb{Z}\) and a real \(\beta \in (0,1)\) that satisfy the following:
    \begin{enumerate}
        \item \(U+V \subseteq [m] \subseteq U-V\); and
        \item \(\abs{U+V} = m^\beta\).
    \end{enumerate}
    Let \(M = \lfloor m^{2-\beta}\rfloor\) and \(n\ge M\) even.
    If $m^{1-\beta}\geq 4$, then there exists a 1--3 tree \(T\) on \(n\) vertices such that the number of distinct leaf-to-leaf path lengths in \([M]\) is at most 
    \(26M^{\frac{1}{2-\beta}}\).
\end{proposition}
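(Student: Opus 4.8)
The plan is to feed the general construction of \Cref{sect:constr} a periodic sequence $(a_i)_{i=1}^m$ read off from the pair $(U,V)$, and then to show that the additive hypotheses pin down all the ``short'' leaf-to-leaf path lengths. Concretely, for each $r\in[m]$ use $U-V=[m]$ to fix $u_r\in U$ and $v_r\in V$ with $u_r-v_r=r$, and set $a_r\coloneqq u_r+v_r$. By hypotheses~(1)--(2) we have $a_r\in U+V\subseteq[m]$, so $(a_r)$ is a sequence of positive integers bounded by $m$; let $T\coloneqq T_n((a_i))$ be the $n$-vertex $1$--$3$ tree it produces. I keep all the notation of \Cref{sect:constr}: the path $P=v_0v_1\cdots v_{t+1}$, the perfect binary trees $(T_i,r_i)$ on $a'_i$ layers hanging from $v_i$ for $i\in[t-1]$, the tree $T_t$ at $v_t$ (whose leaves lie within two consecutive layers of a binary tree on $\lceil\log(L+1)\rceil\le a'_t$ layers), and the periodic extension $a'_i=a_{(i)_m^*}$.

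The point of this choice of $(a_r)$ is a pair of identities. Writing $p\coloneqq(i)_m^*$, so that $i-p$ is a nonnegative multiple of $m$, the relations $a_p+p=2u_p$ and $a_p-p=2v_p$ give $i+a'_i=2u_p+(i-p)$ and $a'_i-i=2v_p-(i-p)$; hence $i+a'_i$ always has the form $2u+km$ with $u\in U$, $k\in\mathbb Z_{\ge0}$, and $a'_i-i$ has the form $2v-km$ with $v\in V$, $k\in\mathbb Z_{\ge0}$. In the same way, if $1\le i<j\le t-1$ have residues $p$ and $q$, then any leaf of $T_i$ and any leaf of $T_j$ are joined by a path of length
\[
a'_i+a'_j+(j-i)=(a_p-p)+(a_q+q)+\big((j-i)-(q-p)\big)=2v_p+2u_q+km
\]
for some $k\in\mathbb Z_{\ge0}$, the quantity $(j-i)-(q-p)$ being a nonnegative multiple of $m$ because $j-i\ge1$, $j-i\equiv q-p\pmod m$, and $|q-p|<m$. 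Since $v_p+u_q\in U+V$, every such length lies in $2(U+V)+\{0,m,2m,\dots\}$.

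Next I would partition the unordered pairs of leaves of $T$ by the location of the two leaves and bound, for each type, the number of distinct path lengths in $[M]$. \emph{(a)} Two leaves in a common $T_i$ (including $i=t$): the path length is at most $2(m-1)$, so there are at most $2m$ such lengths in all. \emph{(b)} Leaves in $T_i$ and $T_j$ with $i<j$ in $[t-1]$: by the identity, a length in $[M]$ forces $km\le M$, i.e.\ $k\le M/m$, so the length lies in a set of size at most $(M/m+1)\,|U+V|$. \emph{(c)} A leaf of some $T_i$ ($i\le t-1$) and a leaf of $T_t$: the path has length $(t+1+d)+(a'_i-i)$, where $d$, the depth of the chosen leaf in $T_t$, takes at most two values; for fixed $d$ the identity makes this of the form $(t+1+d)+2v-km$ with $v\in V$, $k\ge0$, so at most $|V|(M/m+1)$ lie in $[M]$, hence at most $2|V|(M/m+1)$ in total. \emph{(d)} The leaf $v_0$ and a leaf of some $T_i$ ($i\le t-1$): the path has length $i+a'_i=2u+km$ with $u\in U$, $k\ge0$, so at most $|U|(M/m+1)$ lie in $[M]$; symmetrically $v_{t+1}$ contributes at most $|V|(M/m+1)$. \emph{(e)} The finitely many remaining pairs (namely $\{v_0,v_{t+1}\}$, and $v_0$ or $v_{t+1}$ with a leaf of $T_t$) give at most $5$ lengths. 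Summing and using the elementary facts that $\max(|U|,|V|)\le|U+V|=m^\beta$ (as $U,V\neq\emptyset$), that $M\ge4m$ and $M/m\le m^{1-\beta}$ (both from $m^{1-\beta}\ge4$ and $M=\lfloor m^{2-\beta}\rfloor$), and that $(M/m)\,m^\beta\le M^{1/(2-\beta)}$ and $m\le\tfrac43 M^{1/(2-\beta)}$, one gets at most $13\,M^{1/(2-\beta)}$ distinct leaf-to-leaf path lengths in $[M]$.

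The delicate point is \emph{(d)}: a priori the two endpoints $v_0,v_{t+1}$ of the backbone $P$ could each witness on the order of $M$ distinct short lengths, which would swamp everything. What saves the argument is precisely the identity $i+a'_i=2u_p+(i-p)$, forcing these lengths into the rigid shape $2u+km$ with $u\in U$, combined with the cheap but essential bound $|U|\le|U+V|=m^\beta$; hypothesis~(3) enters only here and in controlling the final constant. I expect the real work to be verifying the two displayed identities (in particular, that the hidden multiples of $m$ are nonnegative) and then carrying out the bookkeeping of the previous paragraph carefully enough to land under $13\,M^{1/(2-\beta)}$; everything else is the construction of \Cref{sect:constr} itself.
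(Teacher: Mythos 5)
Your proof is correct and takes essentially the same approach as the paper's: the same choice $a_r = u_r + v_r$, the same observation that $i + a_i' = 2u_p + km$ and $a_i' - i = 2v_p - km$, and essentially the same case split over the locations of the two leaves, reducing the count in each case to a set of the form $2\cdot(U+V)$ (or $2\cdot U$, $2\cdot V$) shifted by nonnegative multiples of $m$. The paper organizes the cases slightly differently (it folds the ``$T_t$'' and ``$v_0,v_{t+1}$'' endpoint cases differently than you do and allows a spurious $\ell_j-\ell_i=-1$, which you correctly note cannot occur), but these are cosmetic; the additive identities and the use of $|U|,|V|\le|U+V|=m^\beta$ together with $M/m\le m^{1-\beta}$ are exactly the paper's. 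One small remark applying equally to both your write-up and the paper's: since $M=\lfloor m^{2-\beta}\rfloor$ one only gets $m\ge M^{1/(2-\beta)}$, not $\le$, so deducing the stated $13\,M^{1/(2-\beta)}$ from a bound of the form $Cm$ needs the small comparison $m< (\tfrac{5}{4}M)^{1/(2-\beta)}$ you supply; the paper is silent on this, so if anything your bookkeeping is a notch more careful.
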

\begin{proof}
    Since \(U-V = [m]\), we can find a sequence of pairs \((u_i,v_i)_{i=1}^m\) such that 
    \(i = u_i - v_i\).
    Consider the sequence \((a_i)_{i=1}^m\), defined by \(a_i = u_i + v_i\). Let \(T\) be the tree $T_n((a_i))$ as defined in \Cref{sect:constr}.
    
    We proceed to count the number of leaf-to-leaf path lengths at most $M$ in $T$. We will prove that there are at most $13m$ such paths, which suffices to prove the proposition since \[M^{\frac{1}{2-\beta}} \geq (m^{2-\beta} - 1)^{\frac{1}{2-\beta}} \geq \frac{m}{2^{\frac{1}{2-\beta}}} \geq \frac{m}{2},\]
    where we used $m^{1-\beta} \geq 4$ in the second inequality.
    
    First note that, for two leaves $u$ and $v$ belonging to the same subtree $T_i$, say, we must have $d(u, v)\leq 2m$. It therefore suffices to show that there are at most $11m$ lengths arising when we consider leaves belonging to different subtrees, say $u\in T_i$ and $v\in T_j$ with $1\leq i<j\leq t$ and $d(u, v)\leq M$, or when $u=v_0$ or $v=v_{t+1}$.
    
    Recall that we write $(i)_m^*$ for the integer in $\{1, 2, \dots m\}$ congruent to $i \mod m$. Write \(i = (i)_m^* + \ell_i \cdot m\), and \(j = (j)_m^* + \ell_j \cdot m\).

    \medskip

    \ninn{\bf Case 1.} If $u\neq v_0, v\neq v_{t+1}$ and $j\neq t$, then \(d(u, v)\) is precisely equal to
    \[
    a_{(i)_m^*} + j - i + a_{(j)_m^*} = 
    a_{(i)_m^*} + (j)_m^* - (i)_m^* + a_{(j)_m^*} + (\ell_j -\ell_i)m
    = 2 u_{(j)_m^*} + 2 v_{(i)_m^*} + (\ell_j - \ell_i)m,
    \]
    where we have used the fact that $a_i+i=2u_i$ and $a_i-i=2v_i$ for all $i$.
    Since \(d(u, v) \in [M]\), we must have $0\leq \ell_j-\ell_i\leq \lceil M/m\rceil$ and thus
    \[
    d(u, v) \in 2\cdot (U+V) + m \cdot \{0, \dots, \ceil{M/m}\} \eqqcolon A.
    \]
    
    But note that $|A|\leq |U+V| \cdot (2M/m)\leq 2m$, so there are at most $2m$ distances we can find in this case.

    \medskip

    \ninn{\bf Case 2.} If $u\neq v_0$ and $j=t$, then let \(c\in \mathbb{N}\) be such that the leaves in \(T_t\) are all at distance either \(a_{(t)_m^*}-c\) or \( a_{(t)_m^*}-c-1\) from the root.
    
    We then have that \(d(u, v)\) is precisely either
    \[
        a_{(t)_m^*} -c + t-i + a_{(i)_m^*}
    \]
    or 
    \[
        a_{(t)_m^*} -c-1 + t-i + a_{(i)_m^*},
    \]
    i.e. we have $d(u, v)\in (A-c)\cup (A-c-1)$, and so we obtain at most $4m$ distances in this case.

    \medskip

    \ninn{\bf Case 3.} If $u=v_0$, then the cases of $v=v_{t+1}$ or $j=t$ provide at most three new distances. If we instead have $j\neq t$, then $j\leq M$ since $d(u, v)\leq M$, and so $0\leq \ell_j\leq \lceil M/m \rceil$. Thus,
    \[
    d(u, v)=a_{(j)_m^*}+j+1=2u_{(j)_m^*}+1+\ell_j m\in 2\cdot U + 1+ m\cdot\{0, 1, \dots, \lceil M/m\rceil \}
    \]
    which is a set of size at most $|U+V|\cdot 2M/m\leq 2m$.

    \medskip
    
    \ninn {\bf Case 4.} If $v=v_{t+1}$, then the case $i=t$ provides at most two new distances. Assuming that $u\neq v_0$ and $i\neq t$, we have that $i\geq (t+1)-M$ since $d(u, v)\leq M$, and by proceeding similarly to the previous case we have again at most $2m$ new distances.

    Putting everything together, we see that indeed $d(u, v)$ can take at most $10m+5\leq11m$ distinct values when $u$ and $v$ do not lie in the same subtree, which completes the proof.
\end{proof}

\begin{proof}[{Proof of \Cref{thm:upper-bound-lengths}}]
    Given $N$ in the statement of the theorem, it is clear that we may assume $n>20N$, say, as for smaller $n$ the conclusion follows by considering the almost-perfect tree on $n$ vertices, which only has about $\log n$ leaf-to-leaf path lengths in total. Let $k\in\mathbb{N}$ be the smallest integer such that $N \leq (169/10)^k$. Set
    \[
    X = \{1,2,5,7\} \text{ and } Y = \{-5,-4,-1,1 \}
    \]
    and observe that
    \[
    X-Y = [0,12] \text{ and } X+Y = \{ -4,-3,-2,0,1,2,3,4,6,8 \}.
    \]
    We further set
    \[
        U = \left\{ \sum_{i=0}^{k-1} x_i 13^i : x_i \in X \right \} + \frac{13^k-1}{6} +1
    \]
    and
    \[
        V = \left\{ \sum_{i=0}^{k-1} y_i 13^i : y_i \in Y \right\} + \frac{13^k-1}{6}.
    \]
    Observe that \(U-V = [13^k]\) and \(\abs{U+V} = 10^k\).
    Thus for any even $n\geq (169/10)^k$, applying \Cref{prop:additive-to-tree} with $m=13^k$ and \(\beta = \log 10 / \log 13\) (and hence $M=\lfloor(169/10)^k\rfloor$) gives a tree $T$ on $n$ vertices with at most $26M^{\frac{1}{2-\beta}}$ leaf-to-leaf path lengths in $[M]$. In particular, as $10M/169<N\leq M$, there are at most $500N^{\frac{1}{2-\beta}}$ lengths in $[N]$, and so the conclusion of the theorem follows.
\end{proof}

Similar constructions to those in the above proof can be found in the work of Ruzsa~\cite{ruzsa}. 

\subsection{Upper bound on path lengths witnessed by a leaf}

\begin{proof}[{Proof of \Cref{lots}(ii)}]
We will provide an explicit construction of an $n$-vertex 1--3 tree in which each individual leaf witnesses at most $20N^{2/3}$ distinct leaf-to-leaf path lengths between 0 and $N$.

Let $m \coloneqq \lfloor N^{1/3} \rfloor$. Recall that we write $(i)_m$ for the residue of $i \mod m$, considered as an element of $\{0, 1,\dots, m-1\}$, and define the sequence $(a_1, \dots, a_{m^2})$ by
\[a_i \coloneqq  \left\lceil\frac{i}{m} \right\rceil \cdot m - (i-1)_m .\]
Observe that $1 \leq a_i \leq m^2 \leq N^{2/3}$ for each $i \in [m^2]$. 
Consider the tree \(T = T_n((a_i)_{i=1}^{m^2})\) described in \Cref{sect:constr}.

We claim that $T$ satisfies the conditions of the theorem. Suppose for the sake of contradiction that there is a leaf $u \in V(T)$ witnessing more than $20N^{2/3}$ distinct lengths in $[0,N]$. 
Then \(u\) witnesses at least \(18N^{2/3}\) distinct lengths in $[2N^{2/3}, N]$.
We will show how to handle the case when $u \in T_{j_0}$ for some $j_0\in[t]$, since the case when $u \in \{v_0, v_{t+1}\}$ is only easier, as it will be clear by the end of the proof.
Set $q = \lfloor18N^{2/3}\rfloor$ and let $s_1, \dots, s_q$ be leaves such that the distances $d(u, s_i)$ are all distinct and in the interval $[2N^{2/3}, N]$. 

Since \(T_{j_0}\) has at most \(N^{2/3}\) layers, every leaf-to-leaf path in \(T_{j_0}\) is of length at most \(2N^{2/3}-2\). But for every \(s_i\) we have \(d(s_i,u) \ge 2N^{2/3} \), and thus \(s_i \notin T_{j_0}\) for all \(i\).

For \(j\neq j_0, t\), any two leaves in \(T_j\) clearly are at the same distance from \(u\), since \(T_j\) is a perfect binary tree;
and, provided \(j_0 \neq t\), leaves in \(T_t\) can have at most two distinct distances to \(u\), since leaves in \(T_t\) are spread over at most two layers. Moreover, the only leaves not in any tree \(T_i\) are \(v_0, v_{t+1}\). Therefore, after relabeling the leaves \(s_i\) if necessary, we may assume that for \(1\le i \le q-4\),
there exists \(j_i \in [t]\setminus \{ j_0, t\}\) with
\(s_i \in T_{j_i} \), and the indices \(j_{i}\) are pairwise distinct.

For each integer $0 \leq k \leq t/m^2$, define $I_k\coloneqq \{km^2+1,\dots, (k+1)m^2\}$. Let $k_0$ satisfy $I_{k_0} \ni j_0$. For each $i \in [q-4]$, if $j_i \in I_k$ then we must have $|k-k_0| < 2N^{1/3}$ since $d(s_i, u) \leq N$. Then, by pigeonhole there exists $k$ such that

\[|I_k \cap \{j_i : i \in [q-4]\}| \geq \frac{q-4}{4N^{1/3}} \geq 4N^{1/3} \geq 4m.\]

We split $I_k$ into $I_L = I_k \cap [0, j_0)$ and $I_R = I_k \cap (j_0, t-1]$, and observe that both \(I_L\) and \(I_R\) are non-empty if and only if \(k=k_0\).

Recall that  \(T_{j_i}\) is a perfect binary tree on \(a_{(j_i)_{m^2}^*}\) layers. For every \(i \in [q-4]\) with \(j_i \in I_R\), we have $j_i > j_0$ and thus
\begin{equation}\label{eq:distances}
d(u, s_i) = a_{(j_0)_{m^2}^*} + j_i - j_0+ a_{(j_i)_{m^2}^*} 
= a_{(j_0)_{m^2}^*} + (j_i)_{m^2}^* + km^2 - j_0+ a_{(j_i)_{m^2}^*},
\end{equation}
since the distance between \(u\) and \(v_{j_0}\) in \(T_{j_0}\) is \(a_{(j_0)_{m^2}^*}\),
the distance between \(v_{j_0}\) and \(v_{j_i}\) in \(P\) is \(j_i - j_0\), and the distance between \(v_{j_i}\) and \(s_i\) in \(T_{j_i}\) is \(a_{(j_i)_{m^2}^*}\). However, from the definition of \(a_{(j_i)_{m^2}^*}\) it easily follows that \(a_{(j_i)_{m^2}^*} + (j_i)_{m^2}^* \equiv 1 \pmod m\), which implies that the RHS of \eqref{eq:distances} can take at most $m$ distinct values as $j_i \in I_R$ varies. Hence we must have \(|I_R| \leq m\), which implies \(|I_L| \ge |I_k| - m \geq 3m\).

Similarly, for $j_i \in I_L$ we have $j_i < j_0$ and thus
\begin{equation}\label{eq:dist2}
d(u, s_i) = a_{(j_i)_{m^2}^*} + j_0 - j_i + a_{(j_0)_{m^2}^*}
 = a_{(j_i)_{m^2}^*} + j_0 - (j_i)_{m^2}^* - km^2 +  a_{(j_0)_{m^2}^*}.
\end{equation}
However, for each $s \in [m^2]$ we see from the definition of \(a_{s}\) that 
\[-m+1 \leq \left(\frac{s}{m}\cdot m - (s-1)_m\right) -s  \leq a_s - s \leq \left(\left(\frac{s}{m} + 1\right)\cdot m - (s-1)_m\right)- s \leq m. \]
This implies that the RHS of \eqref{eq:dist2} can take at most $2m$ distinct values as $j_i \in I_L$ varies. Together with the fact that $|I_L| \geq 3m$, this yields the desired contradiction. 

It is not hard to see that when \(u \in \{v_0,v_{t+1}\}\) essentially the same argument again gives a contradiction.
\end{proof}

\section{Cycles in degree $k$-critical graphs}\label{sect:cycles}
Recall that an $n$-vertex graph is \emph{degree $k$-critical} for some $k \geq 3$ if it has $(k-1)n - {k \choose{2}} + 1$ edges and no proper induced subgraph with minimum degree at least $k$. In this section, we prove a lower bound on the number of cycle lengths in graphs belonging to a general family that contains all degree $k$-critical graphs (i.e. Theorem~\ref{thm:ord->uppc(G)} below). 
By taking $k = 3$, this result implies Theorem~\ref{thm:d-3-cCL}.

Our first lemma provides a useful ordering of the vertex set of a degree $k$-critical graph; we remark that the case $k=3$ was already proven in \cite[Lemma 1]{erdos1988cycles}.
Let $\mathcal{X}=x_1,x_2,\dots,x_n$ be a given ordering of the vertex set $V$ of a graph $G$.
For $x_i\in V$, define $N_\mathcal{X}^+(x_i)=\{x_j\in N_G(x_i):i<j\}$ and $N_\mathcal{X}^-(x_i)=\{x_j\in N_G(x_i):i>j\}$.
We also define $d_\mathcal{X}^+(x_i)=|N_\mathcal{X}^+(x_i)|$ and $d_\mathcal{X}^-(x_i)=|N_\mathcal{X}^-(x_i)|$.
We will generally omit the subscript $\mathcal{X}$ if the ordering is clear from  context.

\begin{lemma}\label{lem:orien exist}
    Let $k\geq 3$ and $n \geq k+1$. Given any $n$-vertex degree $k$-critical graph $G$, there exists an ordering $\mathcal{X}=x_1, x_2,\dots,x_n$ of $V=V(G)$ such that
    \begin{align*}
    d^{+}(x_i)=\left\{
    \begin{array}{rcl}
         k && \text{if } i=1, \\
         k-1 && \text{if } i\in [2, n-k+1],\\
         n-i && \text{if } i\in [n-k+2, n].
    \end{array}
    \right.
\end{align*}
\end{lemma}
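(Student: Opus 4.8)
The plan is to build the ordering greedily from the back, repeatedly stripping off a vertex of minimum degree in the current induced subgraph. I would proceed by downward induction: set $G_n = G$, and having constructed $G_i$ on $i$ vertices, let $x_i$ be a vertex of minimum degree in $G_i$ and put $G_{i-1} = G_i - x_i$. Then by definition $N^+_{\mathcal X}(x_i) = N_{G_i}(x_i)$, so $d^+(x_i) = \deg_{G_i}(x_i) = \delta(G_i)$, and the whole task reduces to showing that $\delta(G_i) = k-1$ for $i \in [k+1, n-1]$ (wait — for $i$ in the stated middle range), $\delta(G_n)=k$, and the last $k$ vertices form a clique-like tail. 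So the real content is controlling $\delta(G_i)$ at every stage.

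The key observation is a counting/criticality dichotomy. Since $G$ is degree $k$-critical, every proper induced subgraph $H$ of $G$ has a vertex of degree (in $H$) at most $k-1$; in particular $\delta(G_i) \le k-1$ for all $i < n$, while $\delta(G) = \delta(G_n) \ge k$ holds because $G$ itself has no proper induced subgraph with minimum degree $\ge k$ yet (one shows) $\delta(G) \ge k$ directly from the edge count and criticality — this is already noted in the excerpt. For the lower bound $\delta(G_i) \ge k-1$ in the middle range, I would use the edge count: $e(G) = (k-1)n - \binom{k}{2} + 1$, and each deletion of $x_i$ removes exactly $d^+(x_i) = \delta(G_i)$ edges, so $e(G_{i-1}) = e(G_i) - \delta(G_i)$. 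If at some stage $i \ge k+1$ we had $\delta(G_i) \le k-2$, then since at every earlier (larger-index) stage $\delta(G_j) \le k-1$, we could bound $e(G_i)$ from above by $e(G) - (k-1)(n-i)$ and then continue stripping: every remaining $G_j$ for $j \le i$ also has $\delta(G_j) \le k-2$ would be too strong, so instead I track the total: $e(G) = \sum_{j=i+1}^{n} \delta(G_j) + e(G_i) \le (n-i)(k-1) + e(G_i)$ combined with a Turán-type or simple lower bound $e(G_i) \ge \binom{i}{2}$-is-false; rather, the cleanest route is: if $\delta(G_i) \le k-2$ for some $i \ge k+1$ then $G_i$ would be a proper induced subgraph on $\ge k+1$ vertices with $e(G_i) = e(G) - \sum_{j>i}\delta(G_j) \ge e(G) - (n-i)(k-1) = (k-1)i - \binom{k}{2}+1$, i.e. $G_i$ itself is "degree $k$-critical-dense," and one then derives a contradiction from minimum degree $\le k-2$ by a short averaging argument (a graph on $i$ vertices with $(k-1)i - \binom k2 + 1$ edges has average degree $> 2(k-1) - (k-1) = k-1$... in fact one checks it must contain an induced subgraph of min degree $\ge k$, contradicting criticality of $G$). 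For the tail $i \in [n-k+2, n]$, once we are down to $k$ vertices or fewer the minimum-degree-$\le k-1$ bound forces these last vertices to be successively $x_{n-k+2}, \ldots, x_n$ with $d^+ = k-1, k-2, \ldots, 0$ — indeed $G_k$ has $e(G_k) = (k-1)k - \binom k2 + 1 = \binom k2 + 1 > \binom k2$, impossible, so actually the tail analysis needs $e(G_{i})$ to match $\binom i2$ exactly at $i=k$, which pins down $G_k = K_k$ and gives the degree sequence $n-i$ for the last $k$ vertices automatically.

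I would organize the write-up as: (1) define the greedy reverse ordering; (2) prove $\delta(G_i) \le k-1$ for all $i < n$ from criticality, hence $d^+(x_i) \le k-1$; (3) track edges via $e(G_{i-1}) = e(G_i) - \delta(G_i)$ and show $\delta(G_i) \ge k-1$ for $i \in [k+1, n-1]$ by the dichotomy above, so equality holds and $d^+(x_i) = k-1$ there; (4) handle $i = n$ (that $\delta(G)=k$, citing the earlier-noted fact) and the clique tail $i \in [n-k+2,n]$ by computing $e(G_k)$ and forcing $G_k = K_k$. \textbf{The main obstacle} I anticipate is step (3): cleanly ruling out $\delta(G_i) \le k-2$ for $i$ in the middle range without circularity. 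The subtlety is that a vertex of degree $\le k-2$ in $G_i$ does not immediately contradict anything — I need to leverage that its removal, plus the accumulated edge count, would eventually exhibit an induced subgraph of $G$ with minimum degree $\ge k$ (by a density-forces-a-dense-core argument: iteratively delete low-degree vertices and use the exact edge count to show the process cannot exhaust all vertices). Getting the bookkeeping on $e(G_i)$ versus the number of deletions exactly right, and invoking criticality at the correct moment, is where the care is needed; everything else is routine.
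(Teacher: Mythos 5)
Your greedy peeling plan is the same idea as the paper's, but your write-up has two concrete problems, one of which is fatal as stated.

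First, the indexing is reversed in a way that conflicts with the definition of $d^+$. With $G_n = G$ and $G_{i-1} = G_i - x_i$, the vertex set of $G_i$ is $\{x_1,\dots,x_i\}$, so the neighbours of $x_i$ inside $G_i$ all have \emph{smaller} index than $i$; that is, $N_{G_i}(x_i) = N^-_{\mathcal X}(x_i)$, not $N^+_{\mathcal X}(x_i)$. As written, your construction controls $d^-$, not $d^+$. The paper builds the ordering from the front: having chosen $x_1,\dots,x_\ell$, it picks $x_{\ell+1}$ with at most $k-1$ neighbours in $V\setminus\{x_1,\dots,x_\ell\}$ (possible by criticality), so that $d^+(x_{\ell+1})$ is exactly the degree of $x_{\ell+1}$ in the remaining induced subgraph. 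Your scheme is the same modulo relabelling $x_i \leftrightarrow x_{n-i+1}$, but you should state it in the direction compatible with $d^+$.

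Second, and more substantively, your step (3) is overcomplicated, and you flag it yourself as the place where the argument might fail. You try to prove the lower bounds $\delta(G_i)\ge k-1$ directly, and for that you need a ``density forces a dense core'' iterated-peeling contradiction (and a preliminary determination of $\delta(G)$), none of which you carry out. The paper sidesteps all of this: it proves only the \emph{upper} bounds $d^+(x_1)\le k$, $d^+(x_i)\le k-1$ for $i\in[2,n-k+1]$, and the trivial $d^+(x_i)\le n-i$ for the tail, and then observes that
\[
|E(G)| \;=\; \sum_{i=1}^n d^+(x_i) \;\le\; k + (k-1)(n-k) + \sum_{i=0}^{k-2} i \;=\; (k-1)n - \binom{k}{2} + 1 \;=\; |E(G)|,
\]
so every inequality is forced to be an equality. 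This single accounting step replaces your dichotomy argument entirely, avoids having to establish $\delta(G)\ge k$ separately, and is the key idea your proposal is missing. Once you fix the labelling and use this sum-to-exact-edge-count observation, your proof and the paper's coincide.
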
 

\begin{proof}
    We construct the ordering $x_1, x_2,\dots,x_n$ iteratively. As a first step, note that by definition there exists a vertex $x_1\in V$ satisfying $d_G(x_1)\leq k$; otherwise, deleting any vertex in $G$ would leave a proper induced subgraph with minimum degree at least $k$.
    
    Assume we have chosen $\{x_1,\dots,x_\ell\}$  for some $\ell \in [n-k]$. 
    Since the minimum degree of the proper induced subgraph $G[V\setminus\{x_1,\dots,x_\ell\}]$ is less than $k$, there exists a vertex $v\in V \setminus \{x_1,\dots,x_\ell\}$ such that $|N_G(v)\setminus \{x_1,\dots,x_\ell\}|\leq k-1$. 
    Define $x_{\ell+1}=v$.

    After selecting $\{x_1,\dots,x_{n-k+1}\}$, we order the remaining $k-1$ vertices arbitrarily as $x_{n-k+2},\dots,x_n$. 
    Then the ordering $x_1, x_2,\dots,x_n$ satisfies $d^+(x_1)\leq k$, $d^+(x_i)\leq k-1$ for $i\in [2,n-k+1]$, and $d^+(x_i)\leq n-i$ for $i \in [n-k+2,n]$.

    We can thus bound the number of edges in $G$ as 
\[
    |E(G)|=\sum_{i=1}^nd^{+}(x_i) \leq k+(k-1)(n-k)+\sum_{i=0}^{k-2}i=n(k-1) - \frac{k(k-1)}{2} + 1.
\]
    By definition, $G$ has exactly $n(k-1) - \frac{k(k-1)}{2} + 1$ edges, hence all inequalities in the previous expression must hold with equality, which proves the lemma.
\end{proof}

Fix an integer $k\geq 3$.
Let $G$ be a graph on $n$ vertices and let $\mathcal{X}=x_1, x_2,\dots,x_n$ be an ordering of $V(G)$.
We say that $(G,\mathcal{X})$ is a \textit{k-ordered graph} 
if
\begin{itemize}
    \item[(1)] $x_{n-1}x_n\in E(G)$,
    \item[(2)] $d^+(x_i)\in [2,k]$ for $i\in [1,n-2],$ and
    \item[(3)] $d^-(x_i)\geq 1$ for $i\in [2,n].$
\end{itemize}
Suppose $G$ is a degree $k$-critical graph on $n$ vertices
and let $\mathcal{X}=x_1, x_2,\dots,x_n$ be the ordering given by Lemma~\ref{lem:orien exist}. 
Then it is easy to verify that $(G,\mathcal{X})$ is a $k$-ordered graph.

Given a graph $G$, we use $\mathcal{C}_G$ to denote the set of cycle lengths in $G$. We can now state the main result of this section.

\begin{theorem}\label{thm:ord->uppc(G)}
    Let $k\geq 3$ and $n\geq k+1$. 
    If $(G,\mathcal{X})$ is a $k$-ordered graph on $n$ vertices, then $|\mathcal{C}_G|\geq \frac{\log n}{3+\log k}-2$.
\end{theorem}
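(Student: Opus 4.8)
The plan is to use the given ordering $\mathcal{X}$ to build a partial order $\preceq$ on $V(G)$, where $x_i \preceq x_j$ roughly means that $x_j$ can be reached from $x_i$ by a path that is "increasing" in the $\mathcal{X}$-order (or something similar, following the forward-neighbourhood structure). By Dilworth's theorem, either there is a long chain or a long antichain in this poset; since each element has at most $k$ forward-neighbours, an antichain of size $a$ forces a chain of size roughly $\log_k(n/a)$, so in all cases we obtain a chain of length $\Omega(\log n / \log k)$ — this is where the $\log n / (3 + \log k)$ comes from, with the $3$ absorbing the constant-factor losses in the case analysis below.

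\textbf{The long-chain case.} A chain $x_{i_1} \prec x_{i_2} \prec \cdots \prec x_{i_\ell}$ should correspond to a path $P$ in $G$ (of length linear in $\ell$) together with, for each internal vertex, a "backward" edge or short path landing earlier on $P$; the condition $d^-(x_i)\ge 1$ for $i \ge 2$ guarantees each vertex has a backward neighbour, and these chords form what the introduction calls a \emph{vine}. A vine of size $s$ over a path is a standard tool for producing many cycle lengths: by choosing which chords to "use", one gets an arithmetic-progression-like family of cycle lengths, yielding $\Omega(s)$ distinct lengths, hence $\Omega(\log n / \log k)$ cycle lengths. I would extract the precise vine-to-cycle-lengths lemma (likely the one used by Narins–Pokrovskiy–Szabó or Bondy–Vince-type arguments) and apply it here.

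\textbf{The long-antichain case.} An antichain of size $a$ with small forward-degrees should, by tracing forward-neighbourhoods, give two subtrees $T_1, T_2$ of $G$ that are vertex-disjoint except that they share a common leaf set $L$ of size $\Omega(a)$ — this is exactly the "two large trees sharing the same leaves" structure mentioned in the introduction. Given such a configuration, for each leaf $x \in L$ one has a path through $T_1$ to some fixed vertex and a path through $T_2$ back; combining $T_1$-paths and $T_2$-paths from different leaves yields cycles, and by an Erdős–Szekeres / pigeonhole argument on the depths (distances to the roots) one extracts $\Omega(\log a)$ distinct cycle lengths. Conveniently, the tree tools already developed in the paper — \Cref{lem:same_depth} and the length-counting arguments behind \Cref{lots} — can likely be invoked almost verbatim to count distinct path lengths in the shared-leaf structure, which then become cycle lengths.

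\textbf{Main obstacle.} The hard part will be setting up the partial order so that \emph{both} Dilworth outcomes yield genuinely useful structure simultaneously: the order must be fine enough that a chain gives a real path with a dense vine (not just a path), yet coarse enough that an antichain of modest size already forces the twin-tree configuration with a large common leaf set. Getting the quantitative bookkeeping to close with the clean constant $3 + \log k$ — in particular ensuring the vine in the chain case has $\Omega(\ell)$ usable chords rather than $o(\ell)$, and that the antichain case loses only a constant factor when passing from $a$ antichain elements to $\Omega(\log a)$ cycle lengths — is where the real work lies. I would first nail down the poset definition, then handle the (easier, more mechanical) chain/vine case, and finally invest the bulk of the effort in the antichain/twin-tree case, reusing as much of \Cref{manypaths} as possible.
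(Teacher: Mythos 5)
Your high-level architecture matches the paper's: define $u\preceq v$ via forward paths, apply Dilworth, handle the chain outcome via a vine and the antichain outcome via two trees sharing a leaf set. The antichain side of your sketch is sound in spirit (the paper builds a forward-directed tree $S$ and a backward-directed tree $T$ with $L(S)=L(T)=L$, trims to make both ``fair'' losing a factor $c^2$, and then induces on the tree structure to get $\log|L|/\log k$ good cycle lengths — akin to your Erdős--Szekeres/pigeonhole idea).

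However, there is a genuine gap on the chain side, and it is not just bookkeeping. You assert that a vine of size $s$ yields $\Omega(s)$ distinct cycle lengths, and you also note that a greedy forward BFS from $x_1$ always produces a chain of length $\Omega(\log_k n)$; put together this would make the Dilworth dichotomy unnecessary — the chain case alone would finish the proof. That is a red flag: vines do \emph{not} in general produce linearly many distinct cycle lengths. A vine on a path of length $t$ produces one cycle, of length somewhere in $[t, 2t-2]$; its length is not tunable chord-by-chord because the vine is rigid (the paths must interleave in a prescribed way, and any subset that is not itself an interleaving vine fails to close up into a cycle). The paper's Lemma~\ref{lem:vine} therefore gets cycle lengths indirectly: for each geometric scale $2^s$, it looks at a suffix of the longest forward path of that length, builds a vine on \emph{that} subpath, and extracts one cycle in the disjoint window $[2^s+1,2^{s+1}]$. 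This yields only $\lfloor\log\ell\rfloor$ cycle lengths from a forward path of length $\ell$ — exponentially fewer than your $\Omega(\ell)$. As a consequence, the chain case alone gives only $\Omega(\log\log n)$ when the longest forward path is short, and the antichain case is genuinely needed; the constant $3+\log k$ arises from optimizing over the threshold $c$ at which one switches between $\log c - 1$ (chain) and $(\log n - 3\log c)/\log k$ (antichain).

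Two smaller inaccuracies worth flagging. First, the vine paths in the paper are \emph{forward} paths obtained from the condition $d^+(x_i)\geq 2$, not backward edges from $d^-(x_i)\geq 1$; the backward condition is used elsewhere (to guarantee $x_1\preceq x$ for all $x$). Second, Dilworth gives a chain of size $n/a$ from an antichain of size $a$, not $\log_k(n/a)$; the $\log_k$ appears only after you run the twin-tree argument on the antichain, not at the Dilworth step.
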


Throughout the rest of this section, we will assume that $(G,\mathcal{X})$ is a $k$-ordered graph. 
Let $u,v\in V(G)$ and $P=w_1w_2\cdots w_t$ be a path in $G$ where $w_1=u$ and $w_t=v$.
We call $P$ a \textit{forward (u,v)-path} if $w_{i+1}\in N_\mathcal{X}^+(w_i)$ for every $i\in [t-1]$. In particular, we also view a path consisting of a single vertex as a forward path.

Towards the proof of Theorem~\ref{thm:ord->uppc(G)},
we start with a series of lemmas.
The first lemma establishes a lower bound on $|\mathcal{C}_G|$ based on the length of the longest forward path in $G$.
\begin{lemma}\label{lem:vine}
    Let $k \geq 3$ and let $(G,\mathcal{X})$ be a $k$-ordered graph.
    For any integer $\ell\geq 2$, if $G$ contains a forward path of length $\ell$, then $|\mathcal{C}_G|\geq \log (\ell+1) -1$.
\end{lemma}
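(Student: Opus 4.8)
\textbf{Proof plan for Lemma~\ref{lem:vine}.}

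The plan is to exploit a forward path $P = w_0 w_1 \cdots w_\ell$ of length $\ell$ together with the structure of a $k$-ordered graph to build a \emph{vine} on $P$: a family of internally disjoint ``chords'' (forward paths off $P$) each of whose endpoints lie on $P$, whose arc lengths along $P$ we can control. The key observation is condition~(3) in the definition of a $k$-ordered graph: every vertex $x_i$ with $i \ge 2$ has $d^-(x_i) \ge 1$, so starting from any vertex we can always walk \emph{backwards} along edges to smaller indices until we reach $x_1$. Applying this to each $w_i$ on $P$, we obtain, for every internal vertex $w_i$, a backward path $Q_i$ from $w_i$ down to some vertex of $P$ with strictly smaller index (we stop $Q_i$ the first time it meets $P$ again, which happens at the latest at $x_1$ if $x_1 \in P$, or we take a longest path and use a minimality argument). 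Each $Q_i$ together with the sub-path of $P$ it spans forms a cycle, and more importantly concatenating several of these backward detours with pieces of $P$ yields cycles of many different lengths.

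The cleaner way to organize this, which I would pursue, is a dyadic/doubling argument. Set $t = \lfloor \log(\ell+1)\rfloor$, so $2^t \le \ell+1$. I claim $G$ contains cycles of $t-1$ distinct lengths, which gives $|\mathcal{C}_G| \ge t - 1 \ge \log(\ell+1) - 2$; to squeeze out the stated bound $\log(\ell+1)-1$ one argues slightly more carefully (e.g.\ one also gets a cycle through the edge $x_{n-1}x_n$ of a controlled length, or one is more economical in the counting). Concretely, for each $j$ with $2^j \le \ell$, consider the vertex $w_{2^j}$ on $P$ and the backward path $Q_{2^j}$ it spawns; this backward path first returns to $P$ at some vertex $w_{p_j}$ with $p_j < 2^j$. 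Then $C_j \coloneqq Q_{2^j} \cup P[w_{p_j}, w_{2^j}]$ is a cycle, and $\ell(C_j) = \ell(Q_{2^j}) + (2^j - p_j)$. The forward arc $P[w_{p_j},w_{2^j}]$ has length $2^j - p_j$, which is at least $2^{j} - 2^{j-1}+ (\text{stuff}) \ge 2^{j-1}$ when $p_j < 2^{j-1}$; handling the case $p_j \ge 2^{j-1}$ requires recursing on a shorter sub-path or rerouting, which is the one genuinely fiddly point. In any case the cycle lengths $\ell(C_j)$ for distinct $j$ live in distinct dyadic ranges (roughly $\ell(C_j) \in [2^{j-1}, 2^{j+1}]$ after accounting for the backward part), so no value is double-counted, yielding $\Omega(\log \ell)$ distinct lengths.

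\textbf{Expected main obstacle.} The delicate part is controlling where each backward path $Q_i$ first re-enters $P$, since a priori it could rejoin $P$ very close to $w_i$, making the resulting cycle short and spoiling the dyadic separation. The fix is to choose the backward paths greedily/maximally and to use the ordering: if $Q_i$ rejoins $P$ at $w_{p}$ with $p$ too large, then $w_p$ itself admits a backward path that must avoid the already-used vertices, and one iterates — or, more slickly, one works with a \emph{single} maximal backward forest from the larger-indexed vertices of $P$ and counts cycle lengths via the depths in this forest. I would also need to verify that all these cycles genuinely are cycles (no unintended chords or repeated vertices), which follows because forward/backward paths respect the ordering $\mathcal{X}$ and hence a forward path and a backward path meeting only at prescribed endpoints cannot share interior vertices. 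Getting the additive constant exactly right to land on $\log(\ell+1)-1$ rather than $\log(\ell+1)-2$ is bookkeeping: one exploits that $\ell \ge 2$ and that $P$ itself, closed up via a backward path from $w_\ell$, already contributes one cycle ``for free.''
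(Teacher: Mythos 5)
Your backward-path strategy has a basic asymmetry problem that the paper's forward-path argument is specifically designed to avoid. You want each backward walk from $w_{2^j}$ to eventually re-hit $P$, but there is no reason it should: condition (3) in the definition of a $k$-ordered graph only guarantees a backward walk reaches $x_1$, and $x_1$ need not lie on $P$. The paper instead fixes a vertex $v_1\notin\{x_{n-1},x_n\}$, takes a \emph{longest} forward path $P$ starting at $v_1$, and observes that $P$ necessarily ends at $x_n$ (since every vertex other than $x_n$ has $d^+\geq 1$); then any forward path leaving $P$ must ultimately reach $x_n\in V(P)$, hence re-enters $P$. That guarantee is load-bearing and is not available in the backward direction unless you additionally force $x_1\in P$.

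Even granting re-entry, your claimed dyadic bound $\ell(C_j)\in[2^{j-1},2^{j+1}]$ is unjustified on the upper end: nothing in your plan bounds $\ell(Q_{2^j})$, which could be close to $n$, and the phrase ``after accounting for the backward part'' conceals an uncontrolled quantity. The paper controls the analogous quantity using maximality of $P$: for a forward chord $Q_i$ with endpoints $a_i,b_i$ on $P$ one has $\ell(Q_i)\leq\ell(P[a_i,b_i])$, since otherwise rerouting $P$ through $Q_i$ would produce a longer forward path. Your proposal has no counterpart, and without it the cycle lengths are not confined to dyadic intervals. Relatedly, the case you call ``genuinely fiddly'' ($p_j$ close to $2^j$) is not a corner case — a single backward edge is always a valid $Q_{2^j}$ and produces a very short cycle. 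The paper never argues per-chord: it builds a full \emph{vine} $Q_1,\dots,Q_m$ of internally disjoint forward chords covering $P$ with the consecutive-predecessor property, shows the vine plus $P$ yields one cycle of length in $[t,2t-2]$ where $t=|V(P)|$, and then applies this construction to the nested suffixes of lengths $2,3,\dots,\ell$ of the longest forward path to extract one cycle length in each dyadic interval $[2^s+1,2^{s+1}]$, giving $\lfloor\log\ell\rfloor\geq\log(\ell+1)-1$ distinct lengths. You would need both the ``re-entry'' and the ``maximality controls chord length'' mechanisms — neither of which your backward setup provides — before the dyadic bookkeeping can even start.
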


\begin{proof}
Fix a vertex $v_1\in V(G)\setminus \{x_{n-1},x_n\}$.
Let $P=v_1\cdots v_t$ be a longest forward path starting at $v_1$. Note that $d^+(v_1) \geq 2$, and thus $v_1$ has a forward neighbour $v' \in V \setminus \{x_n\}$. Since each $v \in V \setminus \{x_n\}$ satisfies $d^+(v) \geq 1$, there exists a forward path from $v'$ to $x_n$. For the same reason, each longest forward path has $x_n$ as its endpoint. Thus, $v_t = x_n$ and $t \geq 3$.

We claim that $\mathcal{C}_G\cap[t,2t-2]\neq \emptyset$. We will construct a cycle of suitable length by following a strategy similar to~\cite{bollobas1989long}. Given two vertices $a, b \in V(P)$, we write $a < b$ if $a$ precedes $b$ in $P$, and we write $a \leq b$ if either $a < b$ or $a = b$. Given a path $Q$ and vertices $u, v \in Q$, recall that $Q[u,v]$ denotes the unique subpath of $Q$ whose endpoints are $u$ and $v$. Following the idea in~\cite{bondy1981relative}, we define a slightly stronger version of \emph{vine} based on $P$ as a collection of internally vertex-disjoint forward paths $\mathcal{Q} = \{Q_i: i \in [m]\}$ such that the ends of $Q_i$ are $(a_i, b_i)$ and the following are satisfied:
\begin{enumerate}[label = (\arabic*)]
    \item $V(Q_i)\cap V(P)=\{a_i,b_i\}$ and $\ell(P[a_i,b_i])\geq 2$ for every $i\in [m]$;\label{ends}
    \item $v_1=a_1<a_2<b_1\leq a_3<b_2\leq a_4<b_3\leq \cdots\leq a_m<b_{m-1}<b_m=x_n$; and\label{oder-2}
    \item $a_{i+1}$ is the immediate predecessor of $b_i$ on $P$ for every $i\in [m-1].$\label{predec}
\end{enumerate}

We will first show the existence of the above structure $\mathcal{Q}$ based on $P$ and then argue that this implies the existence of a cycle of the desired length. For the first of these tasks, we argue inductively that we can construct a collection of paths $\mathcal{Q}$ satisfying \ref{ends}, \ref{oder-2}, and \ref{predec}, and then show that satisfying these conditions implies that the paths are internally vertex-disjoint. 

Suppose that for some $r \geq 0$ we have constructed paths $Q_1, \dots, Q_r$ satisfying \ref{ends}, \ref{predec}, as well as
\begin{enumerate}[label = (\arabic*$'$)]
\setcounter{enumi}{1}
    \item $v_1 = a_1 < a_2 < b_1 \leq a_3 < b_2 \leq a_4 < b_3 \leq \dots \leq a_r < b_{r-1} < b_r$. \label{oder-2'}
\end{enumerate}

Let us show how to construct $Q_{r+1}$. If $r = 0$, we let $a_{r+1} = v_1$, and observe that with this choice we have $d^+(a_{r+1}) \geq 2$. If $r > 0$, then we may assume that $b_r < x_n$ as otherwise \ref{oder-2} is also satisfied and we are done. In this case, we let $a_{r+1}$ be the immediate predecessor of $b_r$ on $P$, and observe that again $d^+(a_{r+1}) \geq 2$ since $a_{r+1} < b_r < x_n$. 

Since $d^+(a_{r+1}) \geq 2$, $a_{r+1}$ has a neighbour $c_{r+1} \in N^+(a_{r+1}) \setminus \{b_r\}$. Let $P_{r+1}$ be a forward $(c_{r+1},x_n)$-path, and let $b_{r+1}$ be the vertex in $V(P_{r+1}) \cap V(P)$ which minimizes $\ell(P_{r+1}[c_{r+1}, b_{r+1}])$. Indeed, $x_n \in V(P_{r+1}) \cap V(P)$ and thus such a vertex must exist. Define \[Q_{r+1} = \{a_{r+1}c_{r+1}\} \cup P_{r+1}[c_{r+1}, b_{r+1}],\] so that $Q_{r+1}$ is a forward path. By definition, $V(Q_{r+1}) \cap V(P) = \{a_{r+1}, b_{r+1}\}$. Moreover, $\ell(P[a_{r+1}, b_{r+1}]) \geq 2$, since otherwise $a_{r+1}b_{r+1} \in E(P)$, and as $Q_{r+1}$ is a forward path it would follow that $b_{r+1}=b_r\neq c_{r+1}$, and thus $(P \setminus \{a_{r+1}b_{r+1}\}) \cup Q_{r+1}$ is a longer forward path starting at $v_1$, contradiction. Finally, if $r \geq 2$, we have $b_{r-1} \leq a_{r+1}$, since $a_r$ is the predecessor of $b_{r-1}$ and $a_{r+1}$ is the predecessor of $b_r$. This shows that $Q_1, \dots, Q_{r+1}$ satisfy conditions \ref{ends}, \ref{oder-2'}, and \ref{predec}.

We repeat this procedure as long as possible, eventually obtaining a collection of paths $\mathcal{Q} = \{Q_1, \dots, Q_m\}$ satisfying \ref{ends}, \ref{oder-2}, and \ref{predec}. We claim that for any $i < j$, $Q_i$ and $Q_j$ are internally vertex-disjoint. If $i + 2 \leq j$, then \ref{oder-2} implies that $a_i < b_i \leq a_j < b_j$ and thus $Q_i$ and $Q_j$ are internally disjoint since they are both forward paths. In the case $j = i + 1$, suppose for a contradiction that the interiors of $Q_i$ and $Q_j$ intersect at $c \in V(G)$. Then $Q_{i+1}[a_{i+1},c]\cup Q_i[c,b_i]$ is a forward $(a_{i+1},b_i)$-path of length at least 2.
Hence $\left(P\setminus \{a_{i+1}b_i\}\right)\cup Q_{i+1}[a_{i+1},c]\cup Q_i[c,b_i]$ is a forward $(v_1,x_n)$-path of length at least $t$, which is strictly greater than $\ell(P)$, contradicting the maximality of $P$.

\begin{figure}
\centering
\begin{tikzpicture}[
    node/.style={fill, circle, inner sep=1.5pt},
    dot/.style={fill, circle, inner sep=1pt},
    xscale=0.5,
    fixedarc/.style={out=60, in=120, looseness=0.8},
    arc label/.style={below=2pt, font=\footnotesize}
]

\foreach \x in {1,2,...,30} {
    \node[node] at (\x,0) (n\x) {}; 
}

\draw[ultra thick] 
  (1,0) -- (4,0)    
  (5,0) -- (8,0)    
  (9,0) -- (15,0)   
  (17,0) -- (21,0)  
  (22,0) -- (25,0)  
  (26,0) -- (30,0); 
\draw[thin] 
  (4,0) -- (5,0)    
  (8,0) -- (9,0)    
  (15,0) -- (16,0)  
  (16,0) -- (17,0)  
  (21,0) -- (22,0)  
  (25,0) -- (26,0); 

\foreach \i/\s/\t [ 
    evaluate=\s as \startnode using int(\s),
    evaluate=\t as \endnode using int(\t)
] in {
    1/1/5,    
    2/4/9,    
    6/21/26,  
    7/25/30   
} {
    \node[arc label] at (n\startnode) {$a_{\i}$};
    \node[arc label] at (n\endnode) {$b_{\i}$};
}
    \node[arc label] at (15, 0) {$a_4$};
    \node[arc label] at (17, 0) {$b_4$};
    \node[arc label] at (8, 0) {$a_3$};
    \node[arc label] at (16, -0.2) {$b_3=a_5$};
    \node[arc label] at (22, 0) {$b_5$};

\newcommand{\drawarc}[3]{
    \path let \p1=($(#2)-(#1)$) in 
        \pgfextra{
            \pgfmathparse{veclen(\x1,\y1)/28.45274}
            \xdef\arcspan{\pgfmathresult}
        };
    \pgfmathparse{0.8 * sqrt(3/\arcspan)}
    \let\arcflex\pgfmathresult
    
    \pgfmathparse{1.0/(#3+1)}
    \let\step\pgfmathresult
    \pgfmathparse{1.0-\step}
    \let\endpos\pgfmathresult
    
    \draw[black, ultra thick,
          postaction={decorate},
          decoration={
            markings,
            mark=between positions \step and \endpos step (\step) with {
              \node[dot] {};
            }
          }]
         (#1) to[fixedarc, /tikz/looseness=\arcflex] (#2);
}

\begin{scope}[decoration={markings}]
\foreach \s/\t/\n in {
    1/5/2,
    4/9/0,
    8/16/3,
    15/17/1,
    16/22/2,
    21/26/3,
    25/30/3  
} {
    \drawarc{n\s}{n\t}{\n} 
}
\end{scope}

\end{tikzpicture}
\captionsetup{justification=centering}
\caption{An example of the forward path $P$ and the path collection $\mathcal{Q}$ forming a vine. The cycle $C$ is illustrated by the bold line.}\label{figvine}
\end{figure}

The vine $\mathcal{Q}$ just constructed yields the cycle (cf. \Cref{figvine})
\[C=\big(P\setminus\{a_{j+1}b_j:j\in [m-1]\}\big)\cup\left(\bigcup\limits_{i\in [m]}Q_i[a_i,b_i]\right).\]
In other words, if $m$ is odd, this cycle is precisely 
\[a_1 Q_1 b_1 \overline{P} a_3 Q_3 b_3 \dots a_m Q_m b_m \overline{P} b_{m-1} Q_{m-1} a_{m-1} \dots a_2 \overline{P} a_1, \]
whereas if it is even, the cycle we get is 
\[a_1 Q_1 b_1 \overline{P} a_3 Q_3 b_3 \dots a_{m-1} Q_{m-1} b_{m-1} \overline{P} b_m Q_m a_m \dots a_2 \overline{P} a_1,\]
where we informally write $\overline P$ above to refer to any subpath between two specified vertices on the path $P$.

It remains to verify that $\ell(C)\in [t,2t-2]$.
Since $V(C)\supseteq V(P)$, we have that $\ell(C)\geq t$.
We also have $\ell(Q_i)\leq \ell(P[a_i,b_i])$, as otherwise the forward path $\big(P\setminus P[a_i,b_i]\big)\cup Q_i[a_i,b_i]$ contradicts the maximality of $P$.
Hence, we have $$\ell(C)= \ell(P)+\sum\limits_{i=1}^m\ell(Q_i)-(m-1)\leq \ell(P)+\sum\limits_{i=1}^m\ell(P[a_i,b_i])-(m-1)=2\ell(P)= 2t-2.$$

Let $Q=u_1u_2\cdots u_{\ell+1}$ be a longest forward path in $G$, so that $u_{\ell+1}=x_n$.
Then for every $t\in [2,\ell]$, $Q[u_{\ell+1-t},x_n]$ is a longest forward path with $t+1\geq 3$ vertices starting at the vertex $u_{\ell+1-t}$ in $G$. By the argument above, each of these paths yields a cycle whose length belongs to the interval $[t+1, 2t]$. Thus, $\mathcal{C}_G\cap[t+1,2t]\neq \emptyset$ for every $t\in [2,\ell]$, which implies $\mathcal{C}_G\cap[2^s+1,2^{s+1}]\neq \emptyset$ for every $s\in \big[\lfloor\log \ell\rfloor\big]$. Since the intervals $[2^s+1,2^{s+1}]$ are pairwise disjoint, we obtain $|\mathcal{C}_G|\geq \lfloor\log \ell\rfloor\geq \log (\ell+1)-1$, as desired.
\end{proof} 

Our next goal is to establish a lower bound on $|\mathcal{C}_G|$ under the assumption that $G$ contains no long forward path. Our proof proceeds by defining a suitable partial order on $V(G)$ and then showing that the absence of long forward paths in $G$ implies the absence of long chains in this partial order. Thanks to the following classical theorem, this will allow us to reduce the problem to the case where $G$ has a long antichain. 

\begin{theorem}[Dilworth \cite{dilworth}]\label{dilworth}
In any finite partial order, the maximum size of an antichain is equal to the minimum number of chains required to cover all its elements.
\end{theorem}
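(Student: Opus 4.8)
The plan is to prove the two inequalities separately. One direction is immediate: if $V(P)$ is covered by chains $C_1,\dots,C_r$, then any antichain $A$ meets each $C_i$ in at most one element, so $|A|\le r$; hence the maximum antichain size is at most the minimum number of chains in a cover. The substance is the reverse inequality, which I would phrase as: if the largest antichain of the finite poset $P$ has size $d$, then $V(P)$ partitions into $d$ chains. I would prove this by induction on $|V(P)|$, the empty poset being the (trivial) base case.

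For the inductive step, I would fix a \emph{maximal} chain $C$ in $P$ --- one not properly contained in a larger chain --- with least element $c_*$ and greatest element $c^*$, and set $P'=P\setminus C$. Since $P'$ is a subposet, its largest antichain has size at most $d$. If that size is at most $d-1$, the induction hypothesis partitions $P'$ into $d-1$ chains and adjoining $C$ yields $d$ chains covering $P$. Otherwise $P'$ contains an antichain $A=\{a_1,\dots,a_d\}$ of size $d$, and I would consider the ``down-set'' $D^-=\{x:x\le a_i \text{ for some } i\}$ and ``up-set'' $D^+=\{x:x\ge a_i\text{ for some } i\}$. The key observations are: (i) $D^-\cup D^+=V(P)$, since an element incomparable to all of $A$ would enlarge $A$, contradicting maximality of $d$; (ii) $D^-\cap D^+=A$, since $a_j\le x\le a_i$ forces $a_j\le a_i$ and hence $i=j$, $x=a_i$; and (iii) $c^*\notin D^-$ and $c_*\notin D^+$, because $c^*\le a_i$ (with $a_i\notin C$) would make $C\cup\{a_i\}$ a strictly larger chain, contradicting maximality of $C$, and symmetrically for $c_*$. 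By (iii) both $D^-$ and $D^+$ are proper subsets of $V(P)$; both contain $A$, so each has largest antichain exactly $d$, and the induction hypothesis partitions $D^-$ into $d$ chains and $D^+$ into $d$ chains.

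To assemble the final partition, I would note that each $a_i$ is a maximal element of $D^-$ (if $a_i\le y\in D^-$ then $y\le a_j$, forcing $i=j$ and $y=a_i$), so in the chain partition of $D^-$ the size-$d$ antichain $A$ hits each of the $d$ chains exactly once, always at its top; relabel so $E_i$ is the chain of $D^-$ with top $a_i$, and dually let $F_i$ be the chain of $D^+$ with bottom $a_i$. Then $G_i:=E_i\cup F_i$ is a chain: $E_i\cap F_i\subseteq D^-\cap D^+=A$ and each of $E_i,F_i$ meets $A$ only in $a_i$, so $E_i\cap F_i=\{a_i\}$, while every element of $E_i$ is $\le a_i$ and every element of $F_i$ is $\ge a_i$. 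The same intersection bound shows the $G_i$ are pairwise disjoint, and $\bigcup_i G_i=D^-\cup D^+=V(P)$ by (i). Hence $G_1,\dots,G_d$ partitions $V(P)$ into $d$ chains, closing the induction.

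The step I expect to be most delicate is the bookkeeping around $D^-$ and $D^+$: verifying they are proper subsets of $V(P)$ (which is precisely what choosing a maximal chain, rather than a single maximal element, provides) and checking that the two inherited chain partitions glue cleanly along the $a_i$. As an alternative I would mention, but not carry out, a reduction to König's theorem: build the bipartite graph on two copies $\{x':x\in V(P)\}$, $\{x'':x\in V(P)\}$ with an edge $x'y''$ whenever $x<y$; a matching of size $m$ corresponds to a partition of $V(P)$ into $|V(P)|-m$ chains, so the minimum number of chains equals $|V(P)|$ minus the maximum matching, and König's min--max theorem together with a short argument extracting an antichain from a minimum vertex cover identifies this quantity with the maximum antichain size.
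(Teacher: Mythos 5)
The paper does not prove this statement at all: Dilworth's theorem is quoted as a classical black box (with a citation to Dilworth's original paper) and is only \emph{used}, in the discussion after Lemma~\ref{lem:2tree} and in the proof of Lemma~\ref{lem:perfect 2tree}, to extract a large antichain from the poset generated by a $k$-ordered graph. So there is no paper proof to compare against; judged on its own, your argument is a correct, self-contained proof of the min--max statement. The easy inequality is handled properly, and the hard direction is the standard maximal-chain induction (essentially Perles' classical proof): the maximality of $C$ is exactly what forces $c^*\notin D^-$ and $c_*\notin D^+$, so both $D^-$ and $D^+$ are proper subposets still containing the size-$d$ antichain $A$, and your gluing of the two inherited chain partitions along $A$ is verified correctly (each $a_i$ is maximal in $D^-$ and minimal in $D^+$, $E_i\cap F_j\subseteq D^-\cap D^+=A$ kills all unwanted intersections). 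The only loose phrase is in the first case of the induction: if the largest antichain of $P'=P\setminus C$ has size $d'\le d-1$, the hypothesis gives a partition of $P'$ into $d'$ chains, not necessarily $d-1$; adjoining $C$ then covers $P$ by $d'+1\le d$ chains, which together with the easy inequality still yields the equality (or, if you insist on exactly $d$ chains, split chains arbitrarily, since $|V(P)|\ge d$). This is cosmetic, not a gap. Your sketched alternative via K\H{o}nig's theorem is also a legitimate route, but as you note it is not carried out, so the maximal-chain induction is the proof that stands.
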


Let $(G,\mathcal{X})$ be a $k$-ordered graph and let $V=V(G)$. 
For $u,v\in V$, let $u\preceq v$ if there exists a forward $(u,v)$-path in $G$.
It is easy to see that \((V, \preceq)\) is a partial order.
We call $(V,\preceq)$ the partial order \textit{generated by} $\mathcal{X}$.
We also write $u\prec v$ when $u\preceq v$ and $u\neq v$.
Observe that if $v_1\prec v_2\prec \cdots\prec v_\ell$ is a chain under the partial order $(V,\preceq)$, then there exists a forward path $P$ such that $v_1,v_2,\cdots, v_\ell$ occur sequentially along $P$.
Hence if every forward path in $G$ has length at most $\ell$, then every chain under $(V,\preceq)$ contains at most $\ell+1$ elements. If so, by Theorem~\ref{dilworth}, $(V, \preceq)$ contains an antichain on at least $n/(\ell + 1)$ elements.

The next lemma shows that, given any antichain $L$, one can find two trees whose leaf set is precisely $L$ and which have no other vertices in common.
A subtree $T$ of $G$ rooted at $u$ is called \textit{forward-directed} (resp. \textit{backward-directed}) if
\begin{itemize}
    \item[(1)] for any subpath $P=u_1u_2\cdots u_t$ of $T$ with $u_1=u$ and $u_t\in L(T)$, $u_i\preceq u_{i+1}$ (resp. $u_{i+1}\preceq u_i$) for every $i\in [t-1]$; and
    \item[(2)] either $d_T(u)\geq 2$ or $T$ consists of a single vertex.
\end{itemize}
Hence the root of a forward-directed (resp. backward-directed) tree $T$ is its minimum (maximum) vertex under $\preceq$.

\begin{lemma}\label{lem:2tree}
     Let $(G,\mathcal{X})$ be a $k$-ordered graph for some $k \geq 3$ and let $(V,\preceq)$ be the partial order generated by $\mathcal{X}$.
     Then for any given antichain $L=\{v_1,v_2,\cdots,v_m\}$ under $\preceq$, there exist a forward-directed subtree $S$ and a backward-directed subtree $T$ of $G$ satisfying $L(S)=L(T)=L$.
\end{lemma}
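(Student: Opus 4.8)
The plan is to build the two trees in mirror-image fashion: $S$ out of forward paths emanating from $x_1$, and $T$ out of forward paths terminating at $x_n$. I will describe the construction of $S$ in detail; that of $T$ is entirely dual.

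First I would record the order-theoretic facts forced by the $k$-ordered hypothesis. Since $d^-(x_i)\ge 1$ for every $i\ge 2$ while $d^-(x_1)=0$, the vertex $x_1$ is the unique $\preceq$-minimal element: starting from any vertex and repeatedly passing to an in-neighbour strictly decreases the $\mathcal X$-index, so after finitely many steps one reaches $x_1$, and reversing this walk yields a forward $(x_1,v)$-path; in particular $x_1\preceq v_i$ for each leaf $v_i\in L$. Dually, using $d^+(x_i)\ge 1$ for all $i\le n-1$ together with $x_{n-1}x_n\in E(G)$, the vertex $x_n$ is the unique $\preceq$-maximal element and $v_i\preceq x_n$ for all $i$. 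I would also dispose of the degenerate case $m=1$ immediately: then the single-vertex tree on $v_1$ is both forward- and backward-directed and has leaf set $\{v_1\}$, so we may take $S=T=\{v_1\}$. Hence assume $m\ge 2$ from now on, which (as $L$ is an antichain) forces $x_1,x_n\notin L$.

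For each $i$ fix a forward $(x_1,v_i)$-path $P_i$ and set $H=\bigcup_{i=1}^m P_i$. Every edge of $H$ runs from a smaller to a larger $\mathcal X$-index, $x_1$ is the only vertex of $H$ with no in-edge inside $H$, and — this is the crucial use of the antichain property — for $j\ne i$ we have $v_i\notin V(P_j)$ (otherwise the sub-path of $P_j$ from $v_i$ to $v_j$ would witness $v_i\prec v_j$), so within $H$ the vertex $v_i$ is incident to exactly one edge, namely the last edge of $P_i$. I would then form a spanning tree $S^*$ of $H$ by choosing, for every $u\in V(H)\setminus\{x_1\}$, a single in-neighbour $\pi(u)\in N^-_H(u)$; such a $\pi(u)$ exists because $u$ lies on some $P_j$ and is not its initial vertex. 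Following $\pi$ strictly decreases the index, hence terminates at $x_1$, so $\{u\pi(u):u\ne x_1\}$ is a connected set of $|V(H)|-1$ edges, i.e.\ a tree on $V(H)$ rooted at $x_1$; and the unique $S^*$-path from $x_1$ to any vertex $u$, read off from the $\pi$-chain, consists only of forward steps, hence is a forward path. By the preceding observation $\deg_{S^*}(v_i)=1$, so every $v_i$ is a leaf of $S^*$. Finally, let $S$ be the smallest subtree of $S^*$ whose vertex set contains $\{v_1,\dots,v_m\}$ — equivalently, obtained by repeatedly deleting from $S^*$ any leaf not in $L$. Exactly as in the pruning argument of \Cref{pfthm1}, $L(S)=\{v_1,\dots,v_m\}=L$; the root $r$ of $S$ is the vertex of $S^*$ nearest $x_1$ that survives, so (using that $S^*$-paths out of $x_1$ are forward) $r$ is the $\preceq$-minimum of $V(S)$, it lies outside $L$ and, since $m\ge 2$, has degree at least $2$ in $S$; and every root-to-leaf path of $S$ is a sub-path of a forward $S^*$-path out of $x_1$, hence a forward path. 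Thus $S$ is forward-directed with $L(S)=L$. Running the mirror argument with $H'=\bigcup_i Q_i$ for forward $(v_i,x_n)$-paths $Q_i$, with out-neighbour choices $\sigma(u)\in N^+_{H'}(u)$ for $u\ne x_n$, and pruning down to the $v_i$'s, produces the backward-directed tree $T$ with $L(T)=L$.

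The delicate points are all of the same flavour: the two appeals to the antichain hypothesis (that $v_i\notin V(P_j)$ for $j\ne i$, making the $v_i$ forced leaves of $S^*$, and that $x_1,x_n\notin L$), and the verification that deleting non-$L$ leaves from $S^*$ neither disconnects anything nor removes a $v_i$, so that the final tree has leaf set exactly $L$. The last point is precisely the leaf-pruning fact already used in \Cref{pfthm1} (a tree stays a tree when a leaf is removed, and $v_i$ is never a candidate for removal), so beyond this bookkeeping I do not anticipate a substantive obstacle.
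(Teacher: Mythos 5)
Your proof is correct and takes a genuinely different route from the one in the paper. The paper builds $S$ incrementally: starting from the one-vertex tree $\{v_1\}$ rooted at $v_1$, it attaches $v_{i+1}$ to $S_i$ in each step, distinguishing two cases according to whether the current root $u_i$ satisfies $u_i \prec v_{i+1}$ (in which case it grafts on a forward path from a $\preceq$-maximal vertex of $S_i$ below $v_{i+1}$) or not (in which case it picks a common lower bound $w$ of $u_i$ and $v_{i+1}$, adds forward paths from $w$ to both, and re-roots at $w$). Your approach instead builds $S$ in one shot: take the union $H$ of forward $(x_1,v_i)$-paths, observe that the antichain hypothesis forces each $v_i$ to have degree $1$ in $H$, extract a spanning arborescence $S^*$ of $H$ by choosing one in-neighbour per non-root vertex, and finally prune $S^*$ to the minimal subtree spanning $L$ and re-root at the surviving vertex nearest $x_1$. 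The global construction neatly avoids the paper's case analysis and the maintenance of an invariant across the induction; the price is that one must verify a few structural facts about $S^*$ (that root-to-vertex paths are forward, that the $v_i$ are forced leaves, that the Steiner-tree root is the $\preceq$-minimum of $V(S)$, lies outside $L$, and has degree at least $2$), but all of these follow cleanly from the fact that $S^*$ is rooted at the global $\preceq$-minimum $x_1$. Both proofs use the antichain property at exactly the same two junctures — to keep the $v_i$ as leaves and to keep the root out of $L$ — so the underlying combinatorics is the same; yours is arguably the more transparent packaging. One very small point: the assertion that the new root $r$ lies outside $L$ deserves a phrase of justification (if $r=v_i$ then $v_i \preceq v_j$ for all $j$, contradicting that $L$ is an antichain once $m\ge 2$), but this follows instantly from the facts you already state.
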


\begin{proof}
    
We will just prove the existence of a forward-directed tree $S$ such that $L(S)=L$, since, as will be clear by the end of the proof, the existence of the required backward-directed tree follows by symmetry.

$S$ is constructed through the following procedure. At the start, we let $S_1 = \{v_1\}$, which we view as a one-vertex tree rooted at $v_1$. Now, suppose that we have already constructed a forward-directed tree $S_i$ for some integer $i\in [m-1]$, and that $L(S_i)=\{v_1,\cdots, v_i\}$. Let $u_i$ be the root of $S_i$.
Then $v_{i+1}\preceq u_i$ cannot hold, as otherwise $v_{i+1}\preceq u_i\preceq v_1$, contradicting the fact that $L$ forms an antichain under $\preceq$.

We will now show how to extend $S_i$ to a larger forward-directed tree $S_{i+1}$ with $L(S_{i+1}) = \{v_1, \dots, v_{i+1}\}$. We split into two cases. 

\medskip

\ninn {\bf Case 1:} $u_i\prec v_{i+1}$.

Note that this cannot happen when $i=1$, hence we may assume that $|L(S_i)|\geq 2$ and $d_{S_{i}}(u_i)\geq 2$.
Let $v\in V(S_{i})$ be a maximal vertex under $\preceq$ such that $v\preceq v_{i+1}$.
Select an arbitrary forward $(v,v_{i+1})$ path $P$ in $G$, which implies $V(P)\cap V(S_{i})=\{v\}$ by maximality of $v$.
Let $S_{i+1}=S_i\cup P$, and observe that $d_{S_{i+1}}(u_i)\geq d_{S_{i}}(u_i)\geq 2$.
Thus, $S_{i+1}$ is a forward-directed tree rooted at $u_i$ such that $L(S_{i+1})=\{v_1,\cdots, v_{i+1}\}$.

\medskip

\ninn {\bf Case 2:} $u_i\nprec v_{i+1}$.

Let $w\in V(G)$ be a maximal vertex under $\preceq$ such that $w\preceq v_{i+1}$ and $w\preceq u_i$. Indeed, such a vertex $w$ exists since $x_1 \preceq u_i$ and $x_1 \preceq v_{i+1}$ (recall that each vertex $x \in V(G) \setminus \{x_1\}$ satisfies $d^-(x) \geq 1$, and thus $x_1 \preceq x$). Select an arbitrary forward $(w,u_i)$ path $P$ and an arbitrary forward $(w,v_{i+1})$ path $Q$.
Then, our choice of $w$ and the fact that $u_i \nprec v_{i+1}$ imply that $V(P)\cap V(S_i)=\{u_i\}$, $V(Q)\cap V(S_i)=\emptyset$, and $V(P)\cap V(Q)=\{w\}$.
Letting $S_{i+1}=S_i\cup P\cup Q$, we have $d_{S_{i+1}}(w)\geq 2$.
Thus, $S_{i+1}$ is a forward-directed tree rooted at $w$ such that $L(S_{i+1})=\{v_1,\cdots, v_{i+1}\}$.

\medskip

The algorithm terminates with a forward-directed tree $S_m$ with $L(S_m)=L$, as required. It can be easily checked that the same argument yields a backward-directed tree $T$ such that $L(T) = L$. The only part of the argument that does not follow directly from the symmetry of the partial order is in Case 2, where we instead use the fact that $x \preceq x_n$ for each $x \in V(G) \setminus \{x_n\}$ since $d^+(x)\geq 1$. 
\end{proof} 

We call a tree $T$ rooted at $u$ \emph{fair} if, for some $q \geq 1$, each leaf $x \in L(T)$ satisfies $d(u,x) = q$. The following lemma shows that by reducing the size of the antichain $L$ by at most a constant factor, we can essentially assume that the forward-directed and backward-directed subtrees guaranteed by Lemma~\ref{lem:2tree} are both fair.

\begin{lemma}\label{lem:perfect 2tree}
    Let $k \geq 3, c \geq 1$. Let $(G,\mathcal{X})$ be a $k$-ordered graph with no forward path of length $c$, and let $(V, \preceq)$ be the partial order generated by $\mathcal{X}$. Then $G$ contains an antichain $L_0 \subseteq V$, a fair forward-directed tree $S_0$, and a fair backward-directed tree $T_0$, satisfying $L(S_0) = L(T_0) = L_0$. Moreover, $|L_0|\geq\frac{|V|}{c^3}$.
\end{lemma}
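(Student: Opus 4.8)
The plan is to start from the forward-directed tree $S$ and backward-directed tree $T$ with $L(S)=L(T)=L$ guaranteed by Lemma~\ref{lem:2tree} (applied to a maximum antichain $L$, which by Dilworth has size at least $|V|/c$ since every chain has at most $c$ elements). We then pass to a large "fair" sub-object of each of $S$ and $T$ by a pigeonhole/layering argument, losing only a factor of $c$ each time. Concretely, for the forward-directed tree $S$ rooted at $u_S$, every root-to-leaf path in $S$ is a forward path, hence has length at most $c-1$; so every leaf lies in one of the layers $\{x : d_S(u_S,x)=q\}$ for $q\in[1,c-1]$. By pigeonhole there is some depth $q_S$ containing at least $|L|/c$ leaves of $S$. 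Let $S_1$ be the subtree of $S$ induced by all vertices lying on a root-to-leaf path ending at a leaf at depth exactly $q_S$; this is still a forward-directed tree (the degree-$\geq 2$ condition at the root is preserved as long as at least two such leaves survive, which holds once $|L|/c\geq 2$), and now it is fair with parameter $q_S$. Write $L_1 := L(S_1)\subseteq L$, so $|L_1|\geq |L|/c$.

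Next I would repeat the same trimming on the backward-directed tree $T$, but restricted to the leaf set $L_1$: let $T'$ be the minimal subtree of $T$ containing $L_1$ together with the root of $T$ — equivalently, delete from $T$ every leaf not in $L_1$ and then iteratively delete resulting leaves not in $L_1$. As in the proof of Theorem~\ref{anylengths}, this leaves a backward-directed tree with leaf set exactly $L_1$ (the only subtlety, the degree-$\geq 2$ condition at the root, is fine once $|L_1|\geq 2$). Now every root-to-leaf path in $T'$ is a forward path traversed in reverse, hence of length at most $c-1$, so again by pigeonhole there is a depth $q_T\in[1,c-1]$ hit by at least $|L_1|/c$ of the leaves of $T'$. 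Trim $T'$ to the subtree $T_0$ spanned by root-to-leaf paths ending at depth exactly $q_T$, and set $L_0 := L(T_0)$; then $T_0$ is a fair backward-directed tree and $|L_0|\geq |L_1|/c \geq |V|/c^3$.

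Finally I must produce a fair forward-directed tree with leaf set exactly $L_0$. Since $L_0\subseteq L_1 = L(S_1)$ and $S_1$ is fair (all its leaves at depth $q_S$), I take $S_0$ to be the minimal subtree of $S_1$ containing $L_0$ and its root, trimmed as above so that $L(S_0)=L_0$; crucially, deleting leaves of a fair tree and then cleaning up does not change the fact that the surviving leaves are all at the same depth $q_S$, so $S_0$ remains a fair forward-directed tree. Thus $S_0, T_0, L_0$ satisfy all the requirements, with $|L_0|\geq |V|/c^3$.

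The step I expect to require the most care is the bookkeeping of the degree-$\geq 2$ root condition under the various deletions: one must check that as long as the current leaf set has at least two elements, the root of the trimmed tree still has degree at least $2$ (and when it has exactly one leaf the tree is a single forward/backward path, which is still forward-/backward-directed by definition). The trimming-to-a-single-depth argument and the "minimal subtree containing a prescribed leaf set" argument are both routine (the latter appears already in the proof of Theorem~\ref{anylengths}), so the only genuine content is verifying that all three of fairness, the directedness property, and the constant-factor loss survive simultaneously, which the layering-then-pigeonhole order above is designed to guarantee.
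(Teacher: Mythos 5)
Your proposal follows essentially the same plan as the paper: Dilworth to get an antichain of size at least $|V|/c$, Lemma~\ref{lem:2tree} to get the forward- and backward-directed trees $S$ and $T$, and then two successive pigeonhole steps on root-to-leaf depth (first in $S$, then in a restricted copy of $T$) to achieve fairness, each costing a factor of $c$. That is exactly the paper's argument, and the bookkeeping order (layering $S$ first, then $T$) matches too.

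There are, however, two small but genuine slips in how you treat the degree-$\geq 2$ root condition, and both are exactly the points the paper handles with care. First, after trimming to a single depth (or to a sub-leaf-set), the \emph{original} root can easily end up with degree $1$ even when the surviving leaf set has size $\geq 2$ --- for instance if all surviving leaves lie in one branch. The fix is not that "the root still has degree $\geq 2$" but that one must \emph{re-root} at the first branching vertex, i.e.\ the minimum (resp.\ maximum) vertex of the trimmed tree under $\preceq$; this is precisely what the paper does when it takes $u'$ and $v'$, and one should then check (easy, by tree geometry) that fairness is preserved relative to the new root. Second, your parenthetical claim that a tree with exactly one leaf is "a single forward/backward path, which is still forward-/backward-directed by definition" misreads the definition: a multi-vertex path has root degree $1$, which is disallowed; the only degenerate case permitted is a single vertex. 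The clean way out (which also repairs the lack of a base case in your write-up) is to dispose of $|V|\le c^3$ separately by taking $L_0=\{v\}$ and letting $S_0,T_0$ be single vertices, after which all the pigeonhole inequalities yield leaf sets of size $\geq 2$ and the re-rooting argument goes through.
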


\begin{proof} First, observe that if $|V| \leq c^3$, then the statement can be seen to be trivially true by choosing $L_0 = \{v\}$ where $v \in V$ is arbitrary, and letting $S_0$ and $T_0$ be one-vertex trees with vertex set $\{v\}$. With this choice, $|L_0| = 1 \geq |V|/c^3$.

From now on, we will assume that $|V| > c^3$. By our assumption on the length of forward paths in $(G, \mathcal{X})$, every chain under $\preceq$ contains at most $c$ elements. By Theorem~\ref{dilworth}, this implies that there is an antichain $L$ satisfying $|L|\geq \frac{|V|}{c}$. 

By Lemma~\ref{lem:2tree}, there exist a forward-directed tree $S$ rooted at $u$ and a backward-directed tree $T$ rooted at $v$ with $L(S)=L(T)=L$. Observe that the path in $S$ connecting $u$ to any given $w \in L$ is a forward path, and thus of length at most $c$. So, there is a subset $L_1\subseteq L$ with $|L_1|\geq \frac{|L|}{c} \geq \frac{|V|}{c^2} \geq 2$ such that any two leaves in $L_1$ are at the same distance from $u$ in $S$. Let $S_1$ and $T_1$ be the unique subtrees of $S$ and $T$ such that $L(S_1)=L(T_1)=L_1$. Let $u'\in V(S_1)$ be the minimum vertex under $\prec$. Then $d_{S_1}(u') \geq 2$ and $S_1$ is a forward-directed tree rooted at $u'$. Analogously, by choosing $v'\in V(T_1)$ to be the maximum vertex under $\prec$, we get that $T_1$ is a backward-directed tree rooted at $v'$. Moreover, $S_1$ is fair. 

We now apply a similar procedure to $T_1$. Again, there must be a subset $L_0 \subseteq L_1$ with $|L_0|\geq \frac{|L_1|}{c} \geq \frac{|V|}{c^3} >1$ such that any two leaves in $L_0$ are at the same distance from $v'$ in $T_1$. Let $S_0$ and $T_0$ be the unique subtrees of $S_1$ and $T_1$ respectively, such that $L(S_0)=L(T_0)=L_0$. By the same argument as before, $S_0$ is a forward-directed tree and $T_0$ is a backward-directed tree. Moreover, both $S_0$ and $T_0$ are fair, as required. 
\end{proof} 

Next, we obtain a lower bound on $|\mathcal{C}_G|$ using the structure from Lemma~\ref{lem:perfect 2tree}. It will be sufficient for our purposes to consider cycles of a special kind. We call a cycle $C$ \emph{good} if $C$ is the union of two internally-disjoint forward $(u,v)$-paths for two vertices $u,v\in V$. Denote the set of all lengths of good cycles in $G$ by $\mathcal{C}_1(G)$.

\begin{lemma}\label{lem:C1} Let $k\geq 3, \Delta \geq 2$, and let $(G,\mathcal{X})$ be a $k$-ordered graph. Suppose that $S$ is a fair forward-directed tree in $G$, and that $T$ is a fair backward-directed tree in $G$, such that $L(S) = L(T) =L$ where $|L| \geq 2$. Further assume that $\Delta(S) \leq \Delta$.
    Then $|\mathcal{C}_1(S\cup T)|\geq \frac{\log |L|}{\log \Delta}$.
\end{lemma}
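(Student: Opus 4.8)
The plan is to induct on $|L|$. The engine is a gadget that turns a pair of leaves into a good cycle whose length is determined by how deep their lowest common ancestors sit in $S$ and $T$, together with a monotonicity statement: restricting $S$ to a large child-subtree strictly lowers the largest available good-cycle length, and this "saves" one new cycle length at each step of the induction. Throughout write $u_S,v_T$ for the roots of $S,T$ and $q_S,q_T$ for the (common) leaf depths, so $S,T$ are fair means every leaf is at depth $q_S$ in $S$, at depth $q_T$ in $T$. I would first record two preliminary facts. (a) If $xy\in E(G)$ and $x\preceq y$, then in fact $y\in N^+_{\mathcal{X}}(x)$, since any forward $(x,y)$-path places $x$ before $y$ in $\mathcal{X}$; hence every edge along a root-to-leaf path of the forward-directed tree $S$ (a $\preceq$-increasing path, by definition) is a genuine forward edge, so $S[a,\ell]$ is a forward $(a,\ell)$-path whenever $a$ is an ancestor of $\ell$ in $S$, and symmetrically $T[\ell,b]$ is a forward $(\ell,b)$-path whenever $b$ is an ancestor of $\ell$ in $T$. (b) Because $S,T$ are fair, every non-leaf of $S$ has depth $\le q_S-1$ and every non-leaf of $T$ has depth $\le q_T-1$; moreover, using that $L$ is an antichain under $\preceq$ (the situation produced by \Cref{lem:perfect 2tree}, and the natural reading of the lemma), $S$ and $T$ share no vertex outside $L$, since a common internal vertex $x$ would satisfy $\ell'\preceq x\preceq\ell$ for leaves $\ell,\ell'\in L$ (a descendant leaf of $x$ in $T$, resp.\ in $S$), contradicting the antichain property (or, if $\ell=\ell'$, the fact that $x\notin L$).

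The gadget: for distinct $\ell_1,\ell_2\in L$ let $a=\mathrm{lca}_S(\ell_1,\ell_2)$ and $b=\mathrm{lca}_T(\ell_1,\ell_2)$, and set $P_i:=S[a,\ell_i]\cup T[\ell_i,b]$ for $i=1,2$. Using (a), each $P_i$ is a concatenation of two forward paths meeting only at $\ell_i$, and using (b) (the only other possible intersection would be a shared internal vertex of $S$ and $T$) this is an honest forward $(a,b)$-path; similarly (b) and the lca structure inside each tree give that $P_1,P_2$ meet only at $a$ and $b$, and that $a\ne b$ (as $a$ is a non-leaf of $S$, hence by (b) not in $V(T)$, while $b\in V(T)$). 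Thus $C:=P_1\cup P_2$ is a good cycle of length $2\big((q_S-\mathrm{depth}_S(a))+(q_T-\mathrm{depth}_T(b))\big)$. Analysing the possible shapes of a forward path inside $S\cup T$ — it descends through $S$ to a leaf and then ascends through $T$, with no way to switch back, since $V(S)\cap V(T)=L$ — I would conclude that every good cycle contained in $S\cup T$ has length at most $2(q_S+q_T)$, with equality precisely when $a=u_S$ and $b=v_T$.

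For the induction, the base case $|L|\le\Delta$ is immediate: any pair of leaves already yields one good cycle via the gadget, and $\log_\Delta|L|\le1$. If $|L|>\Delta$, let $c$ be the child of $u_S$ whose subtree contains the largest set $L'\subseteq L$ of leaves, so $|L'|\ge|L|/\deg_S(u_S)\ge|L|/\Delta\ge2$ and $|L'|<|L|$; let $S^*$ and $T^*$ be the minimal subtrees of $S$ and $T$ spanning $L'$. One checks that $S^*,T^*$ inherit the hypotheses of the lemma ($S^*$ fair forward-directed with $\Delta(S^*)\le\Delta$, $T^*$ fair backward-directed, $L(S^*)=L(T^*)=L'$), and that the root of $S^*$ has depth $\ge1$ in $S$, so every forward path inside $S^*\cup T^*$ has length at most $(q_S-1)+q_T$ and hence every good cycle inside $S^*\cup T^*$ has length at most $2(q_S+q_T)-2$. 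By induction $|\mathcal{C}_1(S^*\cup T^*)|\ge\log_\Delta|L'|\ge\log_\Delta|L|-1$. It remains to exhibit one good cycle of length exactly $2(q_S+q_T)$ in $S\cup T$, i.e.\ two leaves whose lca in $S$ is $u_S$ and whose lca in $T$ is $v_T$. I would obtain this by contradiction: if no such pair existed, then any two leaves lying in distinct child-subtrees of $u_S$ would lie in a common child-subtree of $v_T$; since $\deg_S(u_S)\ge2$, a short transitivity argument forces all of $L$ into a single child-subtree of $v_T$, contradicting the fact that every child-subtree of $v_T$ contains a leaf of $T=L$. This good cycle is strictly longer than every good cycle in $S^*\cup T^*$, so $\mathcal{C}_1(S^*\cup T^*)\subsetneq\mathcal{C}_1(S\cup T)$ with at least one extra element, giving $|\mathcal{C}_1(S\cup T)|\ge|\mathcal{C}_1(S^*\cup T^*)|+1\ge\log_\Delta|L|$ and completing the induction.

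I expect the main obstacle to be the disjointness bookkeeping concentrated in fact (b) and the gadget: verifying that $P_1\cup P_2$ is a genuine cycle (not merely a closed walk) that decomposes into two equal-length forward paths sharing only their endpoints, and that forward paths inside $S\cup T$ and inside $S^*\cup T^*$ really have the rigid "descend in $S$, then ascend in $T$" shape, which is what underpins the length bounds $2(q_S+q_T)$ and $2(q_S+q_T)-2$. Once these structural facts are pinned down, the remaining ingredients — the choice of $c$, the inheritance of the hypotheses by $S^*,T^*$, and the final transitivity argument producing the top-length cycle — are routine.
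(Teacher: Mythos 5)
Your proposal is correct and takes essentially the same route as the paper: induct on $|L|$, pass to the child-subtree of the root of $S$ containing the most leaves (giving a subpair of fair trees with at least $|L|/\Delta$ leaves to which the inductive hypothesis applies), bound all good-cycle lengths in the smaller structure by $2(q_S+q_T)-2$, and then exhibit one new ``top'' good cycle of length exactly $2(q_S+q_T)$ through both roots, obtaining it by contradiction in nearly the same way (if no two leaves had $u_S$ and $v_T$ simultaneously as lca's, all of $L$ would be forced into a single child-subtree of $v_T$). One small but worthwhile point you raise more explicitly than the paper: the disjointness $V(S)\cap V(T)=L$, which underpins the claim that $P_1\cup P_2$ is genuinely a cycle, needs the antichain hypothesis on $L$; the paper's statement omits this condition, but it holds in the only place the lemma is applied (via Lemma~\ref{lem:perfect 2tree}).
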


\begin{proof}

Let $(V,\preceq)$ be the partial order generated by $\mathcal{X}$.
We prove the lemma by induction on $|L|$.
As a base case, let $|L|\in [2,\Delta]$. Pick any two leaves in $L$, and note that they are connected by a path $P_1$ in $S$ and a path $P_2$ in $T$. Then, $P_1 \cup P_2$ is a good cycle, so that $|\mathcal{C}_1(S\cup T)|\geq 1 \geq \frac{\log |L|}{\log \Delta}$. 

Assume that the lemma holds for $|L|\leq t-1$ and consider the case $|L|=t>\Delta$. Let $u$ and $v$ be the roots of $S$ and $T$ respectively. Let $r_1, \dots, r_{\Delta'}$ ($\Delta' \leq\Delta$) be the neighbours of $u$ in $S$. Observe that for some $i \in [\Delta']$ the subtree $S'$ rooted at $r_i$ obtained by deleting $u$ from $S$ contains at least $|L|/\Delta \geq 2$ leaves distinct from $r_i$. Let $u_0$ be maximal under $\preceq$ such that $u_0$ is contained in every path from $r_i$ to $L(S')$ in $S$. Let $S_0$ be the unique subtree of $S'$ whose leaf set is precisely $L_0 := L(S')$, then $S_0$ is a fair forward-directed tree rooted at $u_0$ with $|L_0|\geq |L|/\Delta$. Let $T_0\subseteq T$ be the unique backward-directed subtree of $T$ with $L(T_0)=L_0$. Letting $v_0$ be the minimal vertex under $\preceq$ that is on every path from $v$ to $L_0$ in $T$, we view $T_0$ as rooted at $v_0$, so that $T_0$ is also fair.

By the inductive hypothesis applied to $L_0$, $S_0$ and $T_0$, we have \[|\mathcal{C}_1(S_0\cup T_0)|\geq \frac{\log|L_0|}{\log \Delta} \geq \frac{\log|L|}{\log \Delta}-1.\]

Let $d_S, d_T \geq 1$ satisfy $d_S(u, w) = d_S$ and $d_T(v, w) = d_T$ for each $w \in L$. Any cycle contained in $S_0 \cup T_0$ is of length at most $2 d_{S} + 2 d_{T}- 2$. Therefore, to complete the proof it suffices to show that there exists a good cycle in $S \cup T$ containing $u$ and $v$, which must have length precisely $2d_S + 2d_T$.

Suppose otherwise. By our choice of $L_0$, every subpath of $S$ connecting $L_0$ and $L \setminus L_0$ must contain $u$. So, we may assume that every subpath in $T$ connecting $L_0$ and $L \setminus L_0$ avoids $v$. Let $r'_1, \dots, r'_{\Delta''}$ ($\Delta'' \leq \Delta$) be the neighbours of $v$ in $T$, and let $T_{i}$ ($i \in [\Delta'']$) be the subtree of $T$ containing $r'_i$ after deleting $v$. If there are distinct $i, j \in [\Delta'']$ such that $L_0 \cap V(T_i)$ and $(L \setminus L_0) \cap V(T_j)$ are non-empty, then we obtain a path from $L$ to $L\setminus L_0$ in $T$ containing $v$ (passing through $r'_i$ and $r'_j$), giving a contradiction. Thus, there is some $i \in [\Delta'']$ such that $L = L_0 \cup (L \setminus L_0) \subseteq V(T_i)$, which is only possible if $d_T(v) = 1$, contradicting the fact that $T$ is a backward-directed tree.

Hence, there exists a good cycle in $S \cup T$ containing $u$ and $v$. This cycle is necessarily of length $2d_S + 2d_T$, and thus \[|\mathcal{C}_1(S\cup T)|\geq |\mathcal{C}_1(S_0\cup T_0)|+1\geq \frac{\log|L|}{\log \Delta},\] which completes the proof. \end{proof}

Finally, we are ready to complete the proof of Theorem~\ref{thm:ord->uppc(G)}.

\begin{proof}[ Proof of Theorem~\ref{thm:ord->uppc(G)}.] 
Suppose the maximum length of a forward path in $G$ is $c-1$.
By Lemma~\ref{lem:vine}, $|\mathcal{C}_G|\geq \log c-1$. 
Hence if $n<2c^3$, we have $|\mathcal{C}_G|\geq \frac{\log n-4}{3}>\frac{\log n}{3+\log k}-2$. 
Consider the case $n\geq 2c^3$. Applying Lemma~\ref{lem:perfect 2tree}, we obtain $L\subseteq V$, a fair forward-directed subtree $S$ and a fair backward-directed subtree $T$ of $G$, satisfying $L(S)=L(T)=L$ and $|L|\geq \frac{n}{c^3} \geq 2$.
Since $(G,\mathcal{X})$ is $k$-ordered,
$S$ has maximum degree at most $k$.
From Lemma~\ref{lem:C1}, it follows that $$|\mathcal{C}_G|\geq\big|\mathcal{C}_1(S\cup T)\big|\geq \frac{\log|L|}{\log k}\geq \frac{\log n-3\log c}{\log k}.$$ 
Now we complete the proof by deducing that
\begin{align*}
\big|\mathcal{C}_G\big|
&\geq \min_{c>0}\max\left\{\log c-1,\frac{\log n-3\log c}{\log k}\right\}=\frac{\log n-3}{3+\log k},
\end{align*}
where $\max\left\{\log c-1,\frac{\log n-3\log c}{\log k}\right\}$ achieves its minimum when $\log c=\frac{\log n+\log k}{3+\log k}$.
\end{proof}

Theorem~\ref{thm:d-3-cCL} promptly follows by setting $k = 3$ and combining Lemma~\ref{lem:orien exist} with Theorem~\ref{thm:ord->uppc(G)}.

\section{Conclusion and open problems}\label{sec:conclud}
In this paper, we answered several questions of Narins, Pokrovskiy and Szab\'o \cite{NPSz} on lengths of cycles in degree-critical graphs and leaf-to-leaf paths in trees. We have proven \Cref{conj:anylengths} and disproven \Cref{conj:smalllengths}, but several questions still remain. The most obvious one would be to improve the leading coefficient of the bound we prove in \Cref{thm:d-3-cCL} and completely settle \Cref{conj:manycycles}.

Another interesting question is to determine `how far' \Cref{conj:smalllengths} is from being true, i.e. find the value of the best possible constant $c$ in \Cref{thm:upper-bound-lengths}.

\begin{problem}
    Determine the supremum $c^*$ over all $c\in[0, 1]$ for which the following holds: for all $N$ and all sufficiently large even $n$ (as a function of $N$ and $c$), every $n$-vertex 1--3 tree contains leaf-to-leaf paths of $\Omega(N^{c})$ distinct lengths between $0$ and $N$.
\end{problem}

We do not have a guess for what the true value of $c^*$ should be. \Cref{smalllengths} shows that $c^*\geq 2/3$. In the proof of \Cref{smalllengths}, however, we could only obtain leaf-to-leaf paths which are all witnessed by the same leaf, and \Cref{lots} shows that our bound in this setting is essentially best possible. It is natural to attempt and improve this lower bound on $c^*$ by sharpening the bound in \Cref{additive}, i.e. improving on the lower bound $c'\geq 2/3$ in the setting below.

\begin{problem}\label{conj:additive} Determine the supremum $c'$ over all $c\in [0, 1]$ for which the following holds: for all sufficiently large $n$ and all sequences $(a_i)_{i=1}^n$ of non-negative integers such that $a_i \leq n^{c}$, we have
$$|\{a_i + a_j + (j - i) : 1 \leq i < j \leq n\}| =\Omega\left(n^{c}\right).$$  
\end{problem}

On the other hand, \Cref{thm:upper-bound-lengths} shows that $c^*\leq\left(2-\frac{\log 10}{\log 13} \right)^{-1}\approx 0.9073$, and a straightforward application of the Ruzsa triangle inequality shows that our proof method cannot improve this beyond $0.75$ (more specifically, it is proven in \cite{ruzsa} that one has $|U+V|\geq |U-V|^{2/3}$ for any $U, V\subseteq \mathbb{Z}$, so we cannot take $\beta<2/3$ in \Cref{prop:additive-to-tree}).

As a related problem, it would be interesting to determine the optimal value of $\beta$ that one could take in \Cref{prop:additive-to-tree}. We remark that even the more basic question of determining how small $A+B$ can be relative to $A-B$ for $A, B\subseteq \mathbb{N}$ seems to be wide open -- the best bound we are aware of is the construction of Cutler, Pebody, and Sarkar \cite{sumset} which gives $|A+A|\leq |A-A|^{0.868}.$

Lastly, let us mention one more problem stated in \cite{NPSz}.

\begin{problem}[\!\!{\cite[Problem 6.1]{NPSz}}]\label{pb:allcycles}
    Is there a function $C(n)$ tending to infinity such that every degree 3-critical graph on $n$ vertices contains cycles of all lengths $4, 6, 8, . . . , 2C(n)$?
\end{problem}

The tools used in the present paper seem insufficient to be able to answer this, and we do not speculate on what the answer might be.

\section*{Acknowledgements}
We thank the anonymous referees for their careful reading of this paper and their valuable comments.
We would also like to thank Jozef Skokan for a careful reading of a preliminary version of this manuscript.

\bibliographystyle{plain}
\bibliography{bibliography}

\end{document}